\DeclareMathAlphabet\mathbfcal{OMS}{cmsy}{b}{n}
\pgfplotsset{compat=newest}
\pgfplotsset{plot coordinates/math parser=false}
\newlength\figureheight
\newlength\figurewidth
\newcommand{\R}{\mathbb{R}}
\newcommand{\N}{\mathbb{N}}
\newcommand{\Z}{\mathbb{Z}}
\def\1{\raisebox{2pt}{\rm{$\chi$}}}
\theoremstyle{plain}
\newtheorem{definition}{Definition}
\newtheorem{theorem}{Theorem}
\newtheorem{corollary}{Corollary}
\newtheorem{lemma}{Lemma}
\newtheorem{assumption}{Assumption}
\theoremstyle{definition}
\theoremstyle{remark}
\numberwithin{equation}{section}
\begin{document}

\title{Finite-difference least square methods for solving Hamilton-Jacobi equations using neural networks}

\author{ 
Carlos Esteve-Yag\"ue* \\
	Departamento de Matem\'aticas \\
    Universidad de Alicante \\
    03690 San Vicente del Raspeig, Alicante, Spain \\
	\texttt{c.esteve@ua.es} \\
 *Corresponding author
\And
Richard Tsai \\
	Department of Mathematics and Oden Institute \\
	for Computational Engineering and Sciences\\
	The University of Texas at Austin\\
	Austin, USA \\
	\texttt{ytsai@math.utexas.edu} \\
\And
Alex Massucco\\
	Department of Applied Mathematics \\
	and Theoretical Physics,
	University of Cambridge\\
 	Cambridge, UK \\
	\texttt{am3270@cam.ac.uk} \\
}

\date{\today}

\maketitle

\begin{abstract}
We present a simple algorithm to approximate the viscosity solution of Hamilton-Jacobi~(HJ) equations using an artificial deep neural network. The algorithm uses a stochastic gradient descent-based method to minimize the least square principle defined by a monotone, consistent numerical scheme. We analyze the least square principle's critical points and derive conditions that guarantee that any critical point approximates the sought viscosity solution. Using a deep artificial neural network lifts the restriction of conventional finite difference methods that rely on computing functions on a fixed grid. This feature makes it possible to solve HJ equations posed in higher dimensions where conventional methods are infeasible. We demonstrate the efficacy of our algorithm through numerical studies on various canonical HJ equations across different dimensions, showcasing its potential and versatility. 
\end{abstract}

\keywords{ Hamilton-Jacobi equations, deep learning, finite-difference methods, least squares principle, optimal control, and differential games.}

\AMS{49L25, 35A15, 68T07, 49K20, 65N06}

\section{Introduction}
In recent years, significant research has focused on computing solutions for partial differential equations in high dimensions. These endeavours are driven by advancements in deep learning and new numerical optimisation algorithms. 
In particular, there is great interest in numerical algorithms for Hamilton-Jacobi equations arising in optimal control problems and differential games.

We consider the boundary value problem associated with a Hamilton-Jacobi equation of the form
\begin{equation}
    \label{BVP intro}
    \begin{cases}
    H(x , \nabla u(x) ) = 0 & x\in  \Omega \\
    u (x) = g(x) & x\in \partial\Omega,
    \end{cases}
\end{equation}
where $\Omega\subset \R^d$, with $d\geq 1$, is an open bounded domain with Lipschitz boundary, and
$$
H:\Omega \times \R^d \to \R
\quad \text{and} \quad
g: \partial\Omega \to \R
$$
are the given functions, which we shall refer to as the Hamiltonian and the boundary condition, respectively.
Throughout the paper, we shall assume that 
\begin{equation}
    \label{cond g}
    x\mapsto g(x) \ \text{is Lipschitz on $\partial \Omega$}
\end{equation}
and
\begin{equation}
\label{cond H}
    (x,p) \mapsto H (x,p) \ \text{is locally Lipschitz in $\Omega\times \R^d$ and differentiable with respect to $p$.}
\end{equation}

Certain types of Hamilton-Jacobi equations describe the optimality condition for an optimal control problem. By applying Bellman's dynamic programming principle, one can show that the value function
of the associated optimal control problem formally satisfies the so-called Hamilton-Jacobi-Bellman equation (see \cite{Bardi-Capuzzo-Dolcetta:1997}).
However, it is well-known that even if the boundary condition $g$ is very smooth, the boundary value problem \eqref{BVP intro} is not guaranteed a classical solution. This happens when the characteristics of \eqref{BVP intro}, representing the optimal trajectories of the optimal control problem, collide with each other.
On the other hand, continuous functions that satisfy the PDE for almost every $x\in \Omega$ along with the boundary condition are not necessarily unique.
In \cite{CL-viscosity:1983}, Crandall and Lions introduced the notion of viscosity solution to single out a unique weak solution for the problem \eqref{BVP intro}. Furthermore, in many situations, one can prove that the viscosity solution coincides with the value function, and one may use it to synthesize the optimal feedback for the control problem.

A characterisation of the viscosity solution can be obtained through the so-called \textit{vanishing viscosity method}, which consists of approximating the viscosity solution by the unique classical solution to the problem
\begin{equation}
\label{BVP-visc intro}
\begin{cases}
    H(x , \nabla u^\varepsilon (x) ) - \varepsilon \Delta u^\varepsilon (x) = 0, & x\in  \Omega, \\
    u^\varepsilon (x) = g(x), & x\in \partial\Omega,
\end{cases}
\end{equation}
for $\varepsilon>0$. The viscosity solution to \eqref{BVP intro} is obtained as the point-wise limit of $u^\varepsilon(x)$ as $\varepsilon \to 0^+$.
A different characterisation of the viscosity solution exists, using smooth test functions based on the comparison principle (see \cite{crandall1992user}).
Depending on the structure of the Hamiltonian and the boundary data, the viscosity solution may be discontinuous (see \cite{Bardi-Capuzzo-Dolcetta:1997,ishii1987perron}). This is typically true for Hamilton--Jacobi equations arising in differential game theory or when the boundary condition is prescribed along a characteristic curve. However, in this paper, we make the following assumption:

\begin{assumption}
\label{assump: Lipschitz}
   The boundary value problem~\eqref{BVP intro} has a Lipschitz continuous viscosity solution. 
\end{assumption}

The design and implementation of numerical algorithms to approximate the viscosity solution of \eqref{BVP intro} is a classical problem widely considered throughout the years.
In two or three dimensions, Hamilton-Jacobi equations are typically solved by special finite-difference methods operating on uniform Cartesian grids. 
The Hamiltonian $H$ is approximated by a so-called numerical Hamiltonian $\widehat H$ defined on the grid and the grid function. 
If $\widehat H$ is \emph{consistent} with $H$ and \emph{monotone} (see Definition \ref{def: consistent and monotone scheme}), the solution to the system of discrete equations defined by $\widehat H$ on the grid (properly extended to the domain $\overline{\Omega}$) converges uniformly to the viscosity solution
as the grid spacing tends to $0$ (see e.g. \cite{crandall1984two, BS:1991}).
Similarly to the development of numerical schemes for hyperbolic conservation laws, higher-order approximations to the partial derivatives are proposed to be used with a monotone and consistent numerical Hamiltonian. In parallel, fast algorithms such as the fast marching methods \cite{Tsitsiklis:1995,Sethian:1996, sethian-book:1999,Helmsen:1996} and fast sweeping methods \cite{zhao2005fast,Tsai-Cheng-Osher-Zhao:2001} are developed for a certain class of Hamilton-Jacobi equations to solve the resulting nonlinear system with computational complexities that are linear in $N_d$, the total number of unknowns (for the fast marching methods, the formal complexity is $N_d\log N_d$). 

The main drawback of finite-difference methods implemented in the classical fashion is that they involve solving a system of nonlinear equations with as many unknowns as the number of grid points. 
This means that the complexity of these methods scales at least exponentially with the dimension of the domain.
Therefore, the classical method of formulating and solving for finite difference approximations on grid functions cannot be carried out in the high-dimensional setup.
However, many Hamilton-Jacobi-Bellman equations for optimal control applications are posed in high dimensions. 
%In many control theory applications, the state's dimension is large, and the associated Hamilton-Jacobi-Bellman equation is in a high-dimensional domain.
Recently, there has been substantial development 
in algorithms for solving high dimensional problems utilising ideas from the Hopf-Lax formula and its generalisation; see, e.g. 
\cite{darbon2016algorithms},\cite{chow2019algorithm}.
{Other algorithms \cite{dolgov2021tensor, kalise2018polynomial, dolgov2023data} use tensor decomposition to address special classes of high-dimensional Hamilton-Jacobi equations arising in optimal control theory.}

In recent years, Deep Learning approaches have proven very effective at solving some non-linear problems in high dimensions.
These methods consist of approximating a target function (in our case, the viscosity solution) by an artificial neural network, whose parameters are optimised by minimising a loss function (typically) through a gradient-based method.
This can be formulated as a variational problem of the form
\begin{equation}
\label{general DL problem}
{u}^* \in \arg \min_{u\in \mathcal{F}} \mathcal{J} (u), 
\end{equation}
where 
$$
\mathcal{F} := \{ u(\cdot, \theta) \, : \ \theta \in \R^P \} \subset C(\overline{\Omega})
$$ 
is a parameterized class of functions given by a Neural Network architecture, and $\mathcal{J}:C(\overline{\Omega}) \to \R^+$ is the loss functional. 
Of course, the success of this method heavily relies on the choice of $\mathcal{F}$, $\mathcal{J}(\cdot)$ and the optimisation algorithm.

%Concerning the choice of the functional, 
Ideally, $\mathcal{J}(u)$ should yield smaller values for functions $u$ close to the target function.
A common practice in Deep Learning is to seek a
minimizer of $\mathcal{J} (\cdot)$ in $\mathcal{F}$
using information derived from $\nabla_\theta \mathcal{J}(\Phi(\cdot, \theta)).$
Let us consider the following continuous version of the gradient descent method to find a minimizer of $\mathcal{J}$ in $\mathcal{F}$:
\begin{equation}
\label{cont grad descent}
    \frac{d}{dt}\theta_t = -\nabla _\theta \mathcal{J}(u(\cdot, \theta_t)) = -\mathcal{J}^\prime(u(\cdot,\theta_t)) \,  \nabla_\theta u(\cdot,\theta_t),
\end{equation}
where, for any continuous function $u\in C(\overline{\Omega})$, $\mathcal{J}^\prime(u)\in C (\overline{\Omega})^\ast$ denotes the Fréchet derivative of $\mathcal{J} (\cdot)$ at $u$, and $\nabla_\theta u(\cdot, \theta)$ denotes the vector--valued function with the partial derivatives of $u(x,\theta)$ with respect to the parameter $\theta$. 
We see that a parameter $\theta\in \R^P$ is a stationary point of \eqref{cont grad descent} if and only if all the elements of the vector $\nabla_\theta u (\cdot, \theta) \in [C(\overline{\Omega})]^P$ lie in the kernel of the linear functional $\mathcal{J}^\prime(u (\cdot, \theta))$.
The precise form of the vector-valued function $\nabla_\theta u(\cdot, \theta)$ depends on the architecture of the Neural Network and will not be discussed in this paper.
On the other hand, any parameter $\theta\in \R^P$ such that the function $u = u (\cdot, \theta)$ is a solution to the Euler-Lagrange equation $\mathcal{J}^\prime(u)=0$ is a critical point of $\theta \mapsto \mathcal{J}(u(\cdot, \theta))$, and therefore, a stationary point for the gradient descent method.

In this paper, we investigate the choice of the functional $\mathcal{J}(\cdot)$.
Namely, we are interested in constructing a functional with the two following properties:
\begin{enumerate}
    \item Any global minimiser of $\mathcal{J}(u)$ approximates the viscosity solution of \eqref{BVP intro}.
    \item Any critical point of $\mathcal{J} (u)$ is a global minimizer.
\end{enumerate}
Note that the second property above is crucial in Deep Learning since the optimisation algorithm used to minimise $\mathcal{J}(u)$ is typically a variant of gradient descent, similar to \eqref{cont grad descent}.
In particular, we shall look at
functionals with the structure 
\begin{equation}
    \label{functional def intro}
    \mathcal{J} (u) = \mathcal{L} (u; g) + \mathcal{R} (u),
\end{equation}
where $\mathcal{L}(u; g)$ and $\mathcal{R}(u)$ are, respectively, fitting terms related to the boundary data $g$ and the PDE in \eqref{BVP intro},  {with the following specific form:
$$
\mathcal{L} (u;g) \coloneq \int_{\partial \Omega} \left( u(x) - g(x) \right)^2 dx
\quad \text{and} \quad
\mathcal{R} (u) := \int_\Omega \left( \widehat{H}(x, D^+_\delta u(x), D^-_\delta u(x)) \right)^2 dx,
$$
where $\widehat{H}$ is a numerical Hamiltonian, and $D^+_\delta u(x)$ and $D^-_\delta u(x)$ are the forward and backward finite-difference approximations of the gradient $\nabla u(x)$.}
The main contribution of the current paper is to prove that if we construct $\mathcal{R} (u)$ based on a suitable Lax-Friedrichs numerical Hamiltonian, then one can ensure that the two properties stated above hold.

We point out that other variational approaches exist for solving Hamilton-Jacobi equations.
In \cite{meng2023primal, meng2024primal}, the viscosity solution of a Hamilton-Jacobi equation is identified with the saddle-point of a min-max problem, whose solution is addressed through a primal-dual hybrid gradient algorithm.
In contrast, in developing our proposed formulation, we aim to benefit from the optimisation techniques commonly used in Deep Learning.

{
In practice, the optimisation problem \eqref{general DL problem} is addressed by considering a discrete version of the gradient descent method \eqref{cont grad descent}, applied to a discrete approximation of the functional $\mathcal{J}(u)$ in \eqref{functional def intro}.
In this paper, we propose an algorithm that leverages the stochastic gradient descent algorithm to minimize $\mathcal{J}$ 
in the form
\begin{equation}
    \theta_{n+1}:=\theta_n - \eta_n \nabla_\theta J_n(u(\cdot, \theta_k)),
\end{equation}
where $J_n$ is a Monte-Carlo approximation of the integral terms in $\mathcal{J}$, involving a batch of randomly positioned ``collocation points'' $B_n \subset \overline{\Omega}$. We refer to the strategy of changing $B_n$ at every iteration as \emph{resampling} of the domain. 
This is a crucial step to the success of the proposed algorithm.
}

In the next section, we illustrate our findings with a simple example.
In particular, we consider the Eikonal equation in a one-dimensional interval. First, we compute the optimality condition for a functional based on the PDE residual. Then, we carry out the same computation for a functional based on the residual of a finite-difference approximation of the Hamiltonian.
In Section~\ref{sec: PINNs}, we discuss existing approaches based on Deep Learning for the numerical approximation of PDEs,
such as the well-known framework known as \textit{Physics informed Neural Networks}, or PINNs.
In Section~\ref{sec: main results}, we study the critical points of loss functionals based on numerical Hamiltonians of Lax-Friedrichs type and present our main theoretical contributions.
Sections~\ref{sec: numerics} and \ref{sec: analysis} are devoted to the numerical aspects of our approach. In Section~\ref{sec: numerics}, we describe our algorithm to approximate the viscosity solution employing a Neural Network, and then, we test our method in several examples of Hamilton-Jacobi equations.
{In Section~\ref{sec: analysis} we analyse, through numerical experiments, various aspects of our algorithm, such as data efficiency and accuracy. We also show how the resampling strategy dramatically outperforms other approximation strategies, in which the collocation points are reused throughout the gradient descent iterations.}
In Section~\ref{sec: finite-difference proof}, we present the proofs of the theoretical results presented in Section~\ref{sec: main results}. Finally, we conclude the paper in Section~\ref{sec: summary and perspectives}, summarising our contributions and some perspectives for future research.

The code to reproduce the numerical experiments from Sections~\ref{sec: numerics} and \ref{sec: analysis} is publicly available in the following link:

\url{https://github.com/carlosesteveyague/HamiltonJacobi_LeastSquares_LxF_NNs}

\section{Example}
\label{sec:example}
Consider the model viscous Eikonal equation in the unit interval, with Dirichlet boundary conditions:
\begin{equation}\label{eq:model toy problem}
\begin{cases}
    (\partial_x u)^2 =1+\varepsilon \partial_{xx} u & x\in(0,1), \\
    u(0) = u_0, \quad u(1) = u_1.
\end{cases}
\end{equation}
It is well-known that, for any $\varepsilon\neq 0$ and $u_0,u_1\in \R$, the problem \eqref{eq:model toy problem} has a unique $C^\infty$-solution.

Let us consider the minimisation problem
\begin{equation}\label{J_eps}
\min_u \mathcal{R}_\varepsilon(u) := \dfrac{1}{2} 
\int_0^1 \left( (\partial_x u(x))^2 - \varepsilon \partial_{xx} u(x) -1  \right)^2 dx,
\end{equation}
among functions in $C^\infty (0,1)$ satisfying the boundary conditions $u(0) = u_0$ and $u(1) = u_1$. 

For any $u\in C^\infty (0,1)$, let us define the PDE-residual function $w(x) = (\partial_x u(x))^2 - \varepsilon \partial_{xx}u(x)-1$. For any test function $v\in C_0^2 (0,1)$, the G\^ateaux derivative of $\mathcal{R}_\varepsilon (\cdot)$ at $u$ in the direction $v$ can be written as
\begin{eqnarray*}
    D_v \mathcal{R}_\varepsilon (u) &=& \int_0^1
    \underbrace{\left( (\partial_x u)^2 - \varepsilon \partial_{xx} u -1  \right)}_{= w} \left( 2\partial_x u \partial_x v - \varepsilon \partial_{xx}v \right) dx \\
    &=& - 2\int_0^1 v \left( \partial_x (w \partial_x u)+ {\frac{\varepsilon}{2}} \partial_{xx} w\right) dx
    - {\varepsilon} \left[ w(1) \partial_x v(1) - w(0) \partial_x v(0)\right].
\end{eqnarray*}
Now, suppose that $u^*\in C^\infty (0,1)$ is a critical point of $\mathcal{R}_\varepsilon(\cdot)$. Then, $w$ is a weak solution of 
$$
\begin{cases}
    \partial_x ( w\partial_x u^* ) + {\dfrac{\varepsilon}{2}} \partial_{xx} w = 0 & \text{in} \ (0,1), \\
    w(0) = w(1) = 0.
\end{cases}
$$
It is clear that, for any $\varepsilon \neq 0$, the unique weak solution to this boundary value problem is $w=0$. Therefore any critical point $u^*\in C^\infty (0,1)$ of $\mathcal{R}_\varepsilon(\cdot)$, satisfying the boundary conditions
is a classical solution of
$$
(\partial_x u)^2 - \varepsilon \partial_{xx} u -1 =0.
$$
Since this PDE, coupled with boundary conditions $u(0) = u_0$ and $u(1) = u_1$, has a unique classical solution, we deduce that this solution is the unique critical point of $\mathcal{R}_\varepsilon(\cdot)$ in $C^\infty (0,1)$.

The sign of $\varepsilon$ determines the convexity of the solution. 
In this case, the commonly sought-after \textit{viscosity solution} corresponds to the solution's limit as $\varepsilon\rightarrow 0^+,$ and can be proven to be concave.
If we consider $\varepsilon=0$, the boundary value problem \eqref{eq:model toy problem} has no classical solution, so there are no critical points of $\mathcal{R}_0 (\cdot)$ in $C^\infty (0,1)$. This implies, in particular, that the functional $\mathcal{R}_0 (\cdot)$ is not minimised by any smooth function.
However, one can construct minimisers of $\mathcal{R}_0 (u)$, which are continuous Lipschitz. However, these are not unique, as illustrated in Figure \ref{fig:Eikonal non-uniqueness}.

\begin{figure}
    \centering
    \includegraphics[scale = .28]{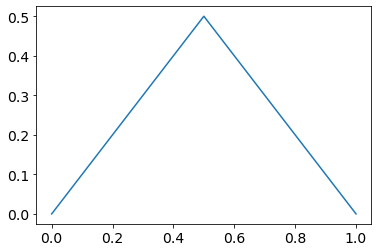}
    \includegraphics[scale = .28]{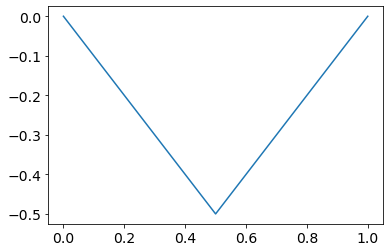}
    \includegraphics[scale = .28]{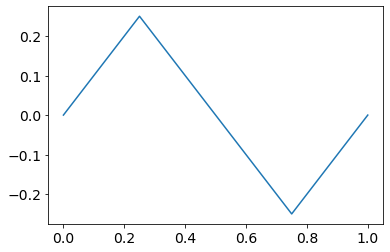}
    \includegraphics[scale = .28]{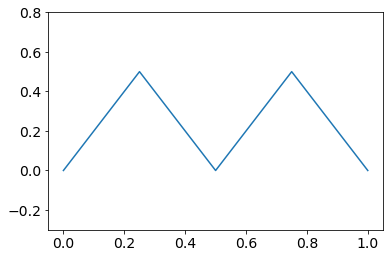}
    \caption{Four weak solutions of the Eikonal equation $(  \partial_x u )^2 -1 =0$ in $\Omega := (0, 1)$ with boundary condition $u(0) = u(1) = 0$, correspodning to four global minimizers to the functional \eqref{PINNs functional intro}.}
    \label{fig:Eikonal non-uniqueness}
\end{figure}

Despite having proved the uniqueness of a critical point in $C^\infty(0,1)$ for any $\varepsilon>0$, it is worth mentioning that there exist other continuous global minimizers of $\mathcal{R}_\varepsilon (\cdot)$ which satisfy the boundary condition and are not $C^\infty$.
Consider, for instance, the boundary value problems
\begin{equation}
\begin{cases}
    (\partial_x \phi_1)^2 =1+\varepsilon \partial_{xx} \phi_1 \qquad  x\in(0,\, 1/2), \\
    \phi_1 (0) = u_0, \quad \phi_1 (1/2) = u_{1/2}.
\end{cases}
\qquad \text{and} \qquad
\begin{cases}
    (\partial_x \phi_2)^2 =1+\varepsilon \partial_{xx} \phi_2 \qquad x\in(1/2,\, 1), \\
    \phi_2(1/2) = u_{1/2}, \quad \phi_2(1) = u_1,
\end{cases}
\end{equation}
for any $u_{1/2}\in \R$.
Both problems admit a unique classical solution in $C^\infty(0,\, 1/2)$ and $C^\infty(1/2, \, 1)$ respectively.
Note that the function
$$
u(x) := \begin{cases}
    \phi_1 (x) & x\in (0, \, 1/2) \\
    \phi_2(x) & x\in (1/2, \, 1)
\end{cases}
$$
is continuous and minimises the functional $\mathcal{R}_\varepsilon (\cdot)$ since it satisfies the boundary condition and the PDE for a.e. $x\in (0,1)$. 
This proves that the minimiser of $\mathcal{R}_\varepsilon (\cdot)$ is not unique in the space of continuous functions. 
Although we could constrain the optimisation to a class of $C^\infty$ functions, this would not solve the problem.  %due to the density of $C^\infty (0,1)$ in $C(0,1)$. 
Indeed, there exist minimising sequences of $C^\infty$ functions converging{, pointwise in $(0,1)$,} to global minimisers of $\mathcal{R}_\varepsilon (\cdot)$, { in $C(0,1)$,} which are not the classical solution to \eqref{eq:model toy problem}.

{One could think of introducing regularization terms to convexify the functional and single out the correct viscosity solution.
This idea was exploited in \cite{le2024carleman}. There, the convexity of the functional is achieved by considering the viscous Hamilton-Jacobi equation with a Carleman weight function and a Sobolev norm, which allows for a Carleman-type inequality. We do not pursue in that direction.}

%In this work, we propose a different way to enhance regularity in the functional. Namely, 
Instead of considering functionals based on the PDE residual, we consider minimising the $L^2$--residual of a finite-difference numerical scheme.
{The needed regularisation will come implicitly from the numerical scheme.}
For any $N\in \N$, set  $\delta := 1/N$, the uniform grid $\overline{\Omega}_\delta := \delta \Z \cap [0,1]$, and consider the following discrete version of the PDE-residual functional $\mathcal{R}_\varepsilon(\cdot)$ defined in \eqref{J_eps}, with $\varepsilon =0$:
\begin{equation} \label{constraint opt Eikonal LxF}
\min_{u} \widehat{\mathcal{R}}(u) := F(U) = \sum_{i=1}^{N-1}  \delta \left[ \widehat{H}_\alpha \left( \frac{u_{i+1}-u_i}{\delta}, \frac{u_{i}-u_{i-1}}{\delta}\right) \right]^2,~~\text{subject to} \ u_0 = u_N = 0,
\end{equation}
where,  for any $u\in C([0,1])$, the vector $U:= (u_0, \ldots, u_N)\in \R^{N+1}$ is the grid function on $\overline{\Omega}_\delta$ associated to $u$, and
$$
\widehat{H}_\alpha (p^+, p^-) = \left(\dfrac{p^+ + p^-}{2}\right)^2  - \alpha\dfrac{p^+ - p^-}{2} - 1
$$
for some $\alpha>0$, is the Lax-Friedrichs numerical Hamiltonian associated to $H(p) =  p^2 -1$.

Note that the functional $F(\cdot)$ is differentiable in $\R^{N+1}$, and then, the grid function $U\in \R^{N+1}$ associated to any local minimizer $u\in C([0,1])$ must satisfy the first-order optimality condition $\nabla F (U) = 0.$

Denoting $p^\pm_i = \pm(u_{i \pm 1} - u_i)/\delta$, we can write the partial derivative of $F(U)$ with respect to $u_i$ as
\begin{eqnarray*}
    \partial_{u_i} F(U) &=& \widehat{H}_\alpha (p_{i-1}^+, p_{i-1}^-) \left( p_{i-1}^+ + p_{i-1}^- - \alpha \right) - \widehat{H}_\alpha (p_{i+1}^+, p_{i+1}^-) \left( p_{i+1}^+ + p_{i+1}^- + \alpha \right)  + 2\alpha \widehat{H}_\alpha (p_i^+, p_i^-), %\\
    %&=& - w_{i-1} \left( \alpha - (p_{i-1}^+ + p_{i-2}^+) \right) - w_{i+1} \left( \alpha + (p_{i+1}^+ + p_i^+) \right) + 2\alpha w_i,
\end{eqnarray*}
for any $i = 2, \ldots , N-2$.

Now, let us denote 
$$
w_i = \widehat{H}_\alpha (p_i^+, p_i^-)\quad 
\text{and} \quad
v_i = p_i^+ + p_i^-, \qquad \text{for} \ i=1,\ldots, N-1.
$$
Using this notation, we can write the optimality condition for $u_i$ with $i = 2, \ldots, N-2$ as
\begin{equation*}
-(\alpha - v_{i-1}) w_{i-1}
-(\alpha + v_{i+1}) w_{i+1}  
+ 2\alpha w_i = 0 , \qquad i = 2,\ldots, N-2.
\end{equation*}

Further, since
\begin{eqnarray*}
\nonumber
    \partial_{u_1} F(U) &=&
    2\alpha \widehat{H}_\alpha (p_1^+, p_1^-)
    - \widehat{H}_\alpha (p_2^+, p_2^-) (p_2^+ + p_2^- + \alpha) \\
    &=& -(\alpha + v_2)w_2 + 2\alpha w_1,\\
    \partial_{u_{N-1}} F (U) &=&
    \widehat{H}_\alpha (p_{N-2}^+, p_{N-2}^-) (p_{N-2}^+ + p_{N-2}^- - \alpha) + 2\alpha\widehat{H}_\alpha (p_{N-1}^+, p_{N-1}^-) \\
    &=& -(\alpha - v_{N-2}) w_{N-2} + 2\alpha w_{N-1},
    \label{opti cond Eikonal u_{N-1}}
\end{eqnarray*}
we obtain the first-order optimality condition for the minimisation problem \eqref{constraint opt Eikonal LxF}, which reads as
\begin{equation}
    \label{opti cond Eikonal LxF}
    \begin{cases}
    -v_2 w_2 - \alpha (w_2 - 2w_1) =0 \\
    v_{i-1} w_{i-1} - v_{i+1} w_{i+1} - \alpha (w_{i+1} + w_{i-1} - 2 w_i) = 0 & \text{for} \ i= 2, \ldots , N-2 \\
     v_{N-2} w_{N-2}-\alpha (w_{N-2} - 2w_{N-1})= 0.
    \end{cases}
\end{equation}
Denoting $W= (w_1, \ldots, w_{N-1})$ and $V=(v_1, \ldots, v_{N-1})$, this system of equations can be written as 
\begin{equation}
\label{optimality system example}
-(A_N(V) + \alpha \Delta_N)  W = 0,
\end{equation}
where $A_N (V) \in \R^{(N-1)\times (N-1)}$ is a matrix depending on $V$ and $\Delta_N \in \R^{(N-1)\times (N-1)}$ is the Toeplitz matrix associated to the discrete Laplace operator on a uniform grid with $N-1$ points.
It turns out that, for $\alpha$ and $\delta = 1/N$ sufficiently { large\footnote{One can ensure that $A_N(V) + \alpha \Delta_N$ is invertible by choosing $N$ sufficiently small in $\delta = 1/N$. This choice depends on the maximum value in the vector $V = (v_1, \ldots, v_{N-1})$, or equivalently, on the Lipschitz constant of the grid function $U = (u_0, \ldots, u_N)$. See section \ref{sec: finite-difference proof} for the rigorous arguments.}}, the matrix $A_N(V) + \alpha \Delta_N$ is { invertible}, which implies that the optimality system \eqref{optimality system example} has only the trivial solution $W = 0$.
This implies that, if $u$ is a critical point of $\widehat{\mathcal{R}}(\cdot)$, then the associated grid function $U = (u_0, \ldots, u_N)$ satisfies 
\begin{equation}
\label{finite-diffs eq example}
w_i =  \left(\dfrac{u_{i+1} - u_{i-1}}{2\delta} \right)^2 - \alpha \dfrac{u_{i+1} + u_{i-1} - 2u_i}{2\delta} - 1 = 0, \qquad \text{for all $i=1,\ldots,N-1$.}
\end{equation}
In other words, $U$ satisfies the finite-difference equation associated with the Lax-Friedrichs numerical Hamiltonian $\widehat{H}_\alpha (p^+, p^-)$.
See Section~\ref{sec: main results} for the precise estimates on $\alpha$ and $\delta$, and Section~\ref{sec: finite-difference proof} for the precise computations in the multi-dimensional setup.
For $\alpha$ sufficiently large, one can prove that the numerical Hamiltonian $\widehat{H}_\alpha (p^+, p^-)$ is consistent with $H(p) = p^2$ and monotone, as per Definition \ref{def: consistent and monotone scheme}.
Moreover, it is known (see \cite{crandall1984two, BS:1991}) that finite-difference solutions for consistent and monotone numerical schemes converge to the viscosity solution as $\delta\to 0^+$.
In Section~\ref{sec: numerics}, we propose a strategy to train a Neural Network, first with a large value of $\delta$, ensuring that any critical point solves the equation \ref{finite-diffs eq example}, and then progressively reducing $\delta$ to ensure convergence to the viscosity solution.

Of course, in the multi-dimensional case, a functional such as $\widehat{\mathcal{R}} (u)$ in \eqref{constraint opt Eikonal LxF} would involve a summation over a multidimensional grid.
Computing the full gradient of $\widehat{\mathcal{R}} (u)$ would be computationally infeasible. 
In practice, one can use a version of stochastic gradient descent, in which at every gradient step, the gradient of $\widehat{\mathcal{R}} (u)$ is approximated by the gradient of a randomized selection of the terms in the sum.

\section{Deep Learning and Partial Differential Equations}
\label{sec: PINNs}

Due to its tremendous success in fields such as computer graphics and computer vision, Deep Learning has recently become a popular candidate for tackling various problems where classical methods face limitations. These include the numerical approximation of PDEs in multiple settings.
Under the Deep Learning framework, 
one typically minimizes 
a functional $\mathcal{J}:C(\Omega) \to \R^+$,
\begin{equation}
\label{optim problem DL}
{u}^\ast \in \arg \min_{u\in \mathcal{F}} \mathcal{J} (u), 
\end{equation}
where $\mathcal{F}\subset C(\overline{\Omega})$ is a class of functions defined by a neural network architecture. 
If $\mathcal{J}(u)\equiv  J(u; D)$ is based on some notion of misfit between the minimiser $u$ and a given data set $D$,
 the setup is often called ``supervised learning'' in the machine learning community.
If $\mathcal{J}$ is defined without labelled data, the setup is often called ``unsupervised".

One of the reasons for the success of Deep Learning techniques 
is that the commonly used (deep) neural networks can be constructed to approximate continuous functions in high dimensions conveniently  \cite{barron1993universal, yarotsky2017error, shen2019nonlinear} and very efficiently for certain classes of smooth functions \cite{bach2017breaking, SIEGEL2020, Siegel-Xu, weinan2022representation, weinan2019barron}.

Another reason for the success of Deep Learning techniques is the development of optimisation methods, which are used to address the problem \eqref{optim problem DL}. 
These are generally gradient-based iterative methods, such as stochastic gradient descent or variants. Hence, choosing the functional $\mathcal{J}(u)$ becomes paramount.
For finding the viscosity solution to the boundary value problem  \eqref{BVP intro}, we consider loss functionals 
of the form
$$
\mathcal{J}(u) = \mathcal{L}(u; D) +  \mathcal{R} (u).
$$

In \cite{raissi2019physics}, the popular Deep Learning framework known as Physics Informed Neural Networks (PINNs) was introduced.
In a PINNs approach, $\mathcal{R}$ can be interpreted as a Monte-Carlo approximation of the squared $L^2$--norm of the PDE residual, and $\mathcal{L}$ may involve both boundary data 
and possibly additional ``collocation" points (pointwise values of the solution in $\Omega$).
When the data fitting term $\mathcal{L}$ is present, $\mathcal{R}$ is regarded as a regularisation term to some practitioners.

Following the PINNs strategy, a least-square principle for the Hamilton-Jacobi equation would involve 
\begin{equation}\label{PINNs functional intro}
    \mathcal{R}(u)=\int_\Omega H(x, \nabla u(x))^2 dx.
\end{equation}
Assuming that we do not have, in principle, access to the viscosity solution in the interior of the domain, the functional $\mathcal{L}$ can only involve the boundary data:
\begin{equation}\label{PINNs functional intro boundary}
    \mathcal{L}(u;g) = \int_{\partial\Omega} (u(x) - g(x))^2 dx.
\end{equation}
In practice, both terms $\mathcal{R} (u)$ and $\mathcal{L}(u;g)$ are approximated through Monte Carlo integration.

We see that any Lipschitz continuous function $u$ satisfying $H(x,  \nabla u(x))=0$ for almost every $x\in \Omega$, together with the boundary condition $u(x) = g(x)$ on $\partial \Omega$, is a global minimizer of $\mathcal{J}(u)$.
It is well-known that solutions of this type are not unique. 
This is true even for the simple Eikonal equation on a one-dimensional interval shown in Example 1.
This is, of course, very inconvenient since one cannot guarantee by any means that minimising the PINNs functional $\mathcal{J}(u)$ with \eqref{PINNs functional intro} and \eqref{PINNs functional intro boundary} would produce an approximation of the viscosity solution.
See Figure~\ref{fig:Eikonal non-uniqueness} for an illustration of four global minimizers of the PINNs functional associated with the one-dimensional Eikonal equation in $(0,1)$ with zero-boundary condition.
To overcome the issue of non-uniqueness of minimisers, and promote convergence to the viscosity solution, 
%we need to modify the functional appropriately 
%in a way that functions with certain regularity properties are enhanced.
%In Deep Learning, a common technique is to add a \textit{regularisation} term to the functional in the form of some Sobolev norm.
%However, this is not the only way to enhance regularity on the minimisers.
%In this paper, 
we replace the $L^2$-norm of the PDE-residual with the least-square principle associated with a finite-difference numerical scheme. We prove that the numerical diffusion of the finite-difference scheme enhances regularity on the minimisers. Moreover, by choosing a suitable consistent and monotone numerical scheme, we can prove that the unique minimiser approximates the viscosity solution.

There are also supervised Deep Learning approaches to address Hamilton-Jacobi equations. 
Different Deep Learning approaches for learning, through data, a weak solution of Hamilton-Jacobi equations are proposed in \cite{cui2024supervised} and in \cite{borovykh2022data}.
In \cite{cui2024supervised}, data is generated by numerically evolving the Hamiltonian system (the characteristic equations of the Hamilton-Jacobi equation) in the phase space, and 
a least square principle is used to assign the solution gradient. However, it is unclear if the minimiser is the viscosity solution.
In \cite{borovykh2022data}, the issue of non-uniqueness of minimisers of the PDE-residual arising in Hamilton-Jacobi equations is alleviated by pre-training the NN with 
supervised data. 
In contrast, our approach requires no supervised data  
for convergence to the viscosity solution.

\subsection{Why finite-difference approximations?}\label{sec:why-fdm}

\paragraph{Larger domain of dependence/influence.}
Finite difference discretization leads to a larger domain of dependence/influence per collocation point compared to the pointwise evaluation of the PDE residual. This feature can lead to a stronger coupling of the ``pointwise losses" and more efficient capturing of the problem's underlying causality. 
{ In a finite-difference approximation of the differential operator, each evaluation involves points in a stencil that covers a region of positive measure, related to the discretization step. Instead, a pointwise evaluation of the differential operator using automatic differentiation only involves a single point, which has measure zero.
In other words, a residual-based Deep Learning model using finite differences has better data efficiency compared to a similar approach using automatic differentiation.
This property is essential as typical solutions of the HJ equation are not classical, and the singular sets of the solutions are lower dimensional. The probability of a finite set of collocation points ``sampling" the singular set is proportional to the size of the domain of dependence of the residual term to be minimized.
Therefore, using a pointwise evaluation of the PDE residual, it is impossible to distinguish between solutions that satisfy the PDE in an almost everywhere sense. See Section \ref{sec:algorithms} for further details.}

{
A similar reasoning can be done in terms of the complexity of the NN. Consider a real-valued function, $u$, defined by a ReLU-activated feedforward neural network. This function is a continuous piecewise linear function.
The collocation of $u$ with the given partial differential equation  
directly regulates only one of its ``linear" pieces. 
Each linear piece is defined by a subset of the network's parameters.
It is possible that the collocated linear pieces are not connected due to 
the number of collocation points being far fewer than the number of linear pieces in $u$.  
In contrast, collocating a finite difference equation regulates the relationship of possibly multiple linear pieces, influencing a larger subset of the parameters in $u$.
This means that \emph{a finite difference-based residual functional reduces the chance of overfitting.}
}

In Section \ref{subsec: data efficiency experiments}, our numerical experiments suggest that using a finer discretization of the PDE in the loss functional requires a larger number of collocation points during the optimisation algorithm.
Evaluating the PDE residual at the collocation points would be equivalent to letting the discretization step go to zero. In this case, the number of collocation points needed to compute a reasonable approximation of the solution would be significantly larger.

\paragraph{Simpler loss gradient evaluations.}
While automatic differentiation can be 
a very convenient and efficient tool for evaluating first-order derivatives, the evaluation of second-order derivatives can be costly with a large memory footprint. See e.g. \cite{sharma2023accelerated} and \cite{9945171}, Tab. V.

\paragraph{Convergence to the viscosity solutions.}
By using a convergent monotone numerical scheme, e.g. \cite{crandall1984two}, our proposed algorithm 
is capable of computing the viscosity solution of the given boundary value problem, without explicitly determining the convexity of the solution or where
the characteristics of problems should collide. 

\section{Finite difference residual functionals}
\label{sec: main results}

Let us consider a numerical Hamiltonian
$$
\widehat{H} (x,p^+, p^-): \Omega \times \mathbb{R}^d \times \mathbb{R}^d \longrightarrow \R.
$$
For a fixed $\delta>0$, we use the notation
$$
D^+_\delta u (x) := \left( \dfrac{u(x+\delta e_1) - u(x)}{\delta} , \ldots ,  \dfrac{u(x+\delta e_d) - u(x)}{\delta} \right) \in \R^d
$$
and
$$
D^-_\delta u (x) := \left( \dfrac{u(x) - u(x-\delta e_1)}{\delta} , \ldots ,  \dfrac{u(x) - u(x-\delta e_d)}{\delta} \right) \in \R^d,
$$
respectively for the upwind and downwind finite-difference approximation of the gradient $\nabla u(x)$.
Here, the vectors
$\{e_i\}_{i=1}^d$ represent the canonical basis of $\mathbb{R}^d$.
An important definition is in order:
\begin{definition}\label{def: consistent and monotone scheme} (Consistency and monotonicity)
    \begin{enumerate}
        \item We say that the numerical Hamiltonian $\widehat{H} (x, p^+, p^-)$ is consistent if 
        $\widehat{H}(x,p,p)\equiv H(x,p)$,~~~$p\in \mathbb{R}^d.$
    \item We say that the numerical Hamiltonian $\widehat{H} (x, p^+, p^-)$ is monotone in $\Omega$ if the function
    $$
    \left( u(x), \{u(x+\delta e_i)\}_{i=1}^d, \{u(x-\delta e_i)\}_{i=1}^d \right)
    \longmapsto \widehat{H} (x, D_\delta^+ u(x), D_\delta^- u(x))
    $$
    is non-decreasing with respect to $u(x)$ and non-increasing with respect to $u(x+\delta e_i)$ and $u(x-\delta e_i)$.
    \end{enumerate}
\end{definition}

\subsubsection*{The Lax-Friedrichs scheme}

In this paper, we work with the Lax-Friedrichs scheme, given by
\begin{equation}\label{H_LxF}
\widehat{H}_\alpha (x, p^+, p^-):= H\left(x, \dfrac{p^+ + p^-}{2}  \right)  - \alpha \sum_{i=1}^d \dfrac{p_i^+ - p_i^-}{2}, \qquad \text{for} \ \alpha>0.
\end{equation}
This numerical scheme is consistent with $H(x,p)$, and for any $L>0$, if one takes $\alpha$ satisfying
\begin{equation}
\label{C_H of L def}
\alpha \geq C_H (L) := \max_{\substack{\|p\|\leq L \\
x\in \overline{\Omega}}} \| \nabla_p H (x, p) \|,
\end{equation}
then $\widehat{H}_\alpha (x, p^+, p^-)$ is monotone at any function $u$ with Lipschitz constant $L$. 

For any $\delta>0, $ let us define the uniform grid
$$\Omega_\delta := \delta \Z^d \cap \Omega,$$ and consider the residual functional
\begin{equation}
    \label{R hat delta}
    %\widehat{\mathcal{R}}_\delta (u)
    \widehat{\mathcal{R}} (u)
:= \delta^d\sum_{x\in \Omega_\delta} \left[\widehat{H}_\alpha (x, D_\delta^+ u(x), D_\delta^- u(x))\right]^2.
\end{equation}

\subsubsection*{Global minimisers}

We note that a function $u\in C(\overline{\Omega})$
is a global minimiser of $\widehat{\mathcal{R}} (u)$
if and only if 
%the restriction to the grid $\Omega_\delta$, denoted by $U_{\Omega_\delta} = \{ u(x)\}_{x\in \Omega_\delta}$, satisfies the system of equations
\begin{equation}
\label{finite-diff eq}
\widehat{H}_\alpha (x, D_\delta^+ u(x), D_\delta^- u(x)) = 0 \qquad x\in \Omega_\delta.
\end{equation}
In other words, $u$ is a global minimiser of $\widehat{\mathcal{R}}(u)$ if and only if the restriction of $u$ to $\Omega_\delta$ solves the discrete equation associated with the numerical Hamiltonian $\widehat{H}_\alpha$. 

If $\widehat{H}_\alpha$ is consistent with $H$ and monotone, one can ensure (see \cite{crandall1984two, BS:1991}) that any minimiser of $\widehat{\mathcal{R}} (u)$ approximates a viscosity solution of the Hamilton-Jacobi equation
\begin{equation}
\label{HJ PDE intro}
H(x, \nabla u) = 0, \qquad \text{in} \ \Omega,
\end{equation}
in the following sense: for any decreasing sequence $\{\delta_n\}_{n\geq 1}$ with $\delta_n\to 0^+$, and an equicontinuous sequence of functions $u_n\in C(\overline{\Omega})$, each of them minimising $\widehat{\mathcal{R}}(u)$ defined on $\Omega_{\delta_n}$. If $u_n$ converge point-wise to some function $u^\ast\in C(\overline{\Omega})$, then $u^\ast$ is a viscosity solution of \eqref{HJ PDE intro}.

With the above observation, we accomplished the first requirement for the functional $\mathcal{R} (\cdot)$, i.e. its global minimisers approximate viscosity solutions to Hamilton-Jacobi equation in \eqref{BVP intro}.
The second requirement is more subtle and is the main contribution of this paper. 
Namely, we provide sufficient conditions on {$\alpha$ relative to 
the numerical Hamiltonian $\widehat{H}_\alpha (x, p^+, p^-)$ and 
the grid spacing $\delta$}
%and the dimension of the domain $d$,} 
ensuring that any critical point of the functional $\widehat{\mathcal{R}} (u)$ defined in \eqref{R hat delta} is indeed a global minimiser. Hence,  it approximates a viscosity solution of \eqref{HJ PDE intro}.

\subsubsection*{Critical points}
Let $N_\delta$ denote the number of grid nodes in $\Omega_\delta$. For any $u\in C(\overline{\Omega})$, let $U :=u|_{\Omega_\delta}\in\mathbb{R}^{N_\delta}$ denote the vector corresponding to the values of $u$ on $\Omega_\delta$ following a chosen ordering of the grid nodes in $\Omega_\delta$. We define the functional $F:\mathbb{R}^{N_\delta}\mapsto \mathbb{R},$ given by
% Since $\widehat{\mathcal{R}}_{\delta} (u)$ only depends on the restriction of $u$ to the grid $\Omega_\delta$, denoted by $U_{\Omega_\delta} = \{ u(x) \}_{x\in \Omega_\delta}\in \R^{N_\delta}$, 
%We define the function 
$$
F(U) := \widehat{\mathcal{R}} (u) \quad  \text{for any}~u\in C(\overline{\Omega}) \ \text{such that}\ U=u|_{\Omega_\delta}.
$$
In view of \eqref{cond H}, the function $F(\cdot)$ is differentiable in $\R^{N_\delta}$, 
and we denote its gradient by $\nabla F (U)\in \R^{N_\delta}$.

Then,
the Fr\'echet differential of $\widehat{\mathcal{R}}(\cdot)$ at any $u\in C(\overline{\Omega})$ can be written as the linear functional on $C(\overline{\Omega})$ given by
$$
\phi \in C(\overline{\Omega}) \longmapsto \Phi \cdot \nabla F (U), \quad \text{where} \ \Phi:=\phi|_{\Omega_\delta}\in \R^{N_\delta}.
$$
Let us define the residual function $w\in C(\overline{\Omega})$ associated to $u$ and $\widehat{H}_\alpha$ as
$$
w(x) = \widehat{H}_\alpha (x, D_\delta^+ u(x), D_\delta^- u(x)), \qquad \forall u\in C(\overline{\Omega}).
$$
From the form of $\widehat{\mathcal{R}} (u)$ in \eqref{R hat delta}, which is a least-squares like functional, the gradient of $F$ has the following form
\begin{equation}\label{def:A_delta}
    \nabla F (U) = A_{\alpha, \delta} (U) W,
\end{equation}
where $W := w|_{\Omega_\delta}\in \R^{N_\delta}$, and $A_{\alpha,\delta} (U)$ is a linear bounded operator $\R^{N_\delta}\to \R^{N_\delta}$, depending on $\widehat{H}_\alpha$ and $U$ (see Section~\ref{sec: finite-difference proof} for the explicit form).
Then, for any $u\in C (\overline{\Omega})$ critical point of $\widehat{\mathcal{R}} (\cdot)$,  the vector of residuals $W$ satisfies the equation
\begin{equation}
\label{adjoint eq intro}
A_{\alpha,\delta} (U) W = 0.
\end{equation}
This linear system can be seen as a finite-difference equation for the residual function $w(x)$ on the grid $\Omega_\delta$. It can be interpreted as the adjoint equation associated with the finite difference system \eqref{finite-diff eq}.
Our next goal is to prove that by choosing suitable parameters $\alpha$ and $\delta$ in the numerical Hamiltonian, one can ensure that the adjoint equation \eqref{adjoint eq intro} has a unique solution, which is the trivial solution $W = 0$.
This implies that any critical point of $\widehat{\mathcal{R}} (u)$ solves the finite-difference equation \eqref{finite-diff eq}. 

\subsection{A sufficient condition for residuals to be zero}

In the following theorem, for any constant $L>0$, we give a sufficient condition on $\alpha$ and $\delta$, ensuring that any critical point $u$ of $\widehat{\mathcal{R}} (u)$ with Lipschitz constant $L$ solves the finite-difference equation \eqref{finite-diff eq}.
We consider the domain $\Omega$ to be a $d$-dimensional cube for simplicity.
The result can be adapted to more general domains, satisfying suitable regularity properties, albeit with some additional technicalities.

\begin{theorem}
    \label{thm: uniqueness finite-diff}
    Let $H$ be a Hamiltonian satisfying \eqref{cond H}, and let $\Omega=(0,1)^d,$  $d\in \N$.
    Consider the uniform Cartesian grid $\Omega_\delta := \delta \Z^d\cap \Omega$, for some $\delta=1/N$ and $N\in\mathbb{N}$,
    and the functional $\widehat{\mathcal{R}} (\cdot)$ defined in \eqref{R hat delta}.
    
    Let $u\in C(\overline{\Omega})$ be a critical point of $\widehat{\mathcal{R}} (\cdot)$ with a Lipschitz constant $L$.
    If
        \begin{equation}
    \label{condition theorem}
      2 \alpha \sin^2 \left(\frac{\pi}{2}\delta \right) > \max_{\substack{\|p\|\leq L \\
     x\in \overline{\Omega}}} \| \nabla_p H (x, p) \| 
    \end{equation}   
    then   
    \begin{equation} \label{finite-diffs Lax-Friedrichs intro}
        \widehat{H}_\alpha (x, D_\delta^+ u(x), D_\delta^- u(x))=0,~~~ \forall x\in\Omega_\delta.
    \end{equation}
\end{theorem}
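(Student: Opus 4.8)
The plan is to reduce the statement to an invertibility claim for the operator $A_{\alpha,\delta}(U)$ of \eqref{def:A_delta}. Indeed, a critical point satisfies $A_{\alpha,\delta}(U)W=0$ by \eqref{adjoint eq intro}, so if $A_{\alpha,\delta}(U)$ is injective then $W=0$, which is exactly the conclusion \eqref{finite-diffs Lax-Friedrichs intro}. The first step is therefore to make $A_{\alpha,\delta}(U)$ explicit. Writing the Lax--Friedrichs residual \eqref{H_LxF} at a grid node $x$ as $w(x)=H\bigl(x,\tfrac12(D_\delta^+u(x)+D_\delta^-u(x))\bigr)-\tfrac{\alpha}{2\delta}\sum_{i=1}^d\bigl(u(x+\delta e_i)-2u(x)+u(x-\delta e_i)\bigr)$, I would differentiate $F(U)=\widehat{\mathcal{R}}(u)$ with respect to each interior value $u(y)$, collecting the three stencil contributions coming from $x=y$ and $x=y\mp\delta e_i$. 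Since the centered difference $\tfrac12(D_\delta^++D_\delta^-)$ does not see $u(x)$, the $H$-term contributes nothing to the diagonal, and the computation yields
$$
A_{\alpha,\delta}(U)=\alpha\,\Delta_\delta+\sum_{i=1}^d B_i,\qquad B_i=-\,T_i\,D_{a_i},
$$
where $\Delta_\delta$ is the $d$-dimensional discrete Dirichlet Laplacian on $\Omega_\delta$ (symmetric positive definite, since the missing neighbours at $\partial\Omega$ are simply absent from the sum defining $F$), $T_i$ is the skew-symmetric centered-difference operator $(T_i\psi)(y)=\psi(y+\delta e_i)-\psi(y-\delta e_i)$, and $D_{a_i}$ is multiplication by $a_i(z):=\partial_{p_i}H\bigl(z,\tfrac12(D_\delta^+u(z)+D_\delta^-u(z))\bigr)$.

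The structural observation that makes the sharp constant appear is the factorisation $B_i=-T_iD_{a_i}$ with $T_i^\top=-T_i$. Consequently the symmetric part $S_i:=\tfrac12(B_i+B_i^\top)=\tfrac12\bigl(D_{a_i}T_i-T_iD_{a_i}\bigr)$ obeys $\|S_i\|\le\|T_i\|\,\|D_{a_i}\|$. I would then bound the two factors: $\|T_i\|\le 2$ because $T_i$ is normal with at most two entries $\pm1$ per row (Gershgorin), and $\|D_{a_i}\|\le C_H(L)$ from \eqref{C_H of L def}, using that for an $L$-Lipschitz $u$ the centered difference has norm at most $L$, so $|a_i(z)|\le\|\nabla_pH\|\le C_H(L)$ (this is the same estimate that underlies the monotonicity discussion). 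Hence $\|S_i\|\le 2\,C_H(L)$ for every $i$.

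Writing $\Delta_\delta=\sum_i\Delta_\delta^{(i)}$ with $\Delta_\delta^{(i)}$ the one-dimensional Dirichlet Laplacian in direction $i$ (tensored with identities), its smallest eigenvalue is $\lambda_{\min}(\Delta_\delta^{(i)})=4\sin^2(\tfrac{\pi}{2}\delta)$ from the classical spectrum $4\sin^2(k\pi\delta/2)$ of the $(N-1)$-point Dirichlet Laplacian with $\delta=1/N$. Since the skew part of $B_i$ drops out of the quadratic form, for every $W$,
$$
\langle A_{\alpha,\delta}(U)W,W\rangle=\sum_{i=1}^d\Bigl(\alpha\,\langle\Delta_\delta^{(i)}W,W\rangle+\langle S_iW,W\rangle\Bigr)\ge d\,\bigl(4\alpha\sin^2(\tfrac{\pi}{2}\delta)-2\,C_H(L)\bigr)\|W\|^2 .
$$
Under hypothesis \eqref{condition theorem}, namely $2\alpha\sin^2(\tfrac{\pi}{2}\delta)>C_H(L)$, the bracket is strictly positive, so $A_{\alpha,\delta}(U)$ is positive definite and hence invertible; therefore $W=0$, giving \eqref{finite-diffs Lax-Friedrichs intro}.

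The main obstacle is the first step: carrying out the differentiation of $F$ with the correct boundary bookkeeping so that $A_{\alpha,\delta}(U)$ comes out as the genuine Dirichlet Laplacian plus the directional first-order operators, and, above all, recognising the skew factorisation $B_i=-T_iD_{a_i}$. It is precisely this factorisation that lets one estimate the indefinite first-order part by $C_H(L)$ alone (a first-order, Lipschitz quantity) rather than by the variation of $\nabla_pH$ along the grid, which a naive coercivity or diagonal-dominance estimate would produce; that refinement is what matches the clean threshold \eqref{condition theorem} exactly. The remaining points to verify carefully are the eigenvalue computation for $\Delta_\delta^{(i)}$ and the bound $\|D_{a_i}\|\le C_H(L)$, both of which are routine once the decomposition is in place.
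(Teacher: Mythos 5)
Your proposal is correct and follows essentially the same route as the paper: reduce the critical-point condition to invertibility of the adjoint operator $\alpha\Delta_N + A_N(V)$, lower-bound the discrete Dirichlet Laplacian by its smallest eigenvalue $4d\sin^2\!\left(\tfrac{\pi}{2}\delta\right)$, and bound the first-order advection part by $2d\,C_H(L)$. The only cosmetic difference is the finishing step — you establish positive-definiteness of the quadratic form after discarding the skew part of $B_i$, whereas the paper bounds the smallest singular value via $\|(\alpha\Delta_N + A_N(V))W\|_2 \ge \alpha\|\Delta_N W\|_2 - \|A_N(V)W\|_2$ — and both arguments produce exactly the threshold \eqref{condition theorem}.
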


The proof is given in Section~\ref{sec: finite-difference proof}. Essentially, it shows that if $\alpha,\delta$ and $L$ satisfy \eqref{condition theorem}, then the linear operator $A_{\alpha,\delta}(U)$ in \eqref{def:A_delta} associated to $\widehat{H}_\alpha$ is invertible for any function $u$ with Lipschitz constant equal or smaller than $L$. This implies that if the first-order optimality condition \eqref{adjoint eq intro} holds, then the residuals $W$ associated with the function $u$ on $\Omega_\delta$ satisfy $W = 0$, and hence, $u$ solves the system of equations \eqref{finite-diffs Lax-Friedrichs intro}.

%\begin{remark}\label{rmk: eigenvalue discrete lap}
We further note that condition \eqref{condition theorem} can be written as
    \begin{equation}\label{eq:equiv_cond_th_1}
    \alpha \dfrac{\lambda_1 (\Omega_\delta)}{2d}>  \max_{\substack{\|p\|\leq L \\
     x\in \overline{\Omega}}} \| \nabla_p H (x, p) \|,
    \end{equation}
    where $\lambda_1 (\Omega_\delta)$ is the smallest eigenvalue of the matrix associated with the discrete Dirichlet Laplacian on the computational domain $\Omega_\delta = \delta \Z \cap \Omega$.
    A similar result to Theorem \ref{thm: uniqueness finite-diff} can be proved for a more general class of domains $\Omega$ satisfying suitable regularity conditions. However, we have restricted ourselves to the case of a $d$-dimensional cube for the sake of clarity of the arguments.
%\end{remark}

Since our goal is to use an artificial neural network to approximate the solution of a given Hamilton-Jacobi problem, we do not need to 
commit ourselves to one fixed grid as in a standard finite difference method.
Therefore, 
we consider the functional $\mathcal{R} (\cdot): C(\overline{\Omega}) \to \R^+$ given by
\begin{equation}
\label{R alpha delta integral}
\mathcal{R}(u) =  \int_\Omega \left[\widehat{H}_\alpha (x, D_\delta^+ u(x), D_\delta^- u(x))\right]^2 dx.
\end{equation}
Note that
% We study the minimisers and critical points of %the functional 
% $\mathcal{R}_{\alpha,\delta} (u)$ in \eqref{R alpha delta integral} %is 
% by writing it as
$$
\mathcal{R} (u) = \int_{(0,\delta)^d} \widehat{\mathcal{R}} (u; z) dz,
$$
where $\widehat{\mathcal{R}} (u; z)$ is a discrete functional similar to \eqref{R hat delta}, but on the shifted grid $\Omega_\delta (z) := (\delta \Z^d + z) \cap \Omega$ for $z\in (0,\delta)^d$.

By minimising $\mathcal{R}(\cdot)$, one solves the associated finite-difference equation on a family of shifted Cartesian grids, covering the entire domain $\Omega$ (instead of on a fixed grid).
This fact is then exploited in~Algorithm~\ref{alg: training}, which  
% The difference can be significant for larger values of $\delta$ {\bb (relative to a fixed $\alpha$)} which
% may ensure  %we recall that, in order to ensure 
% convergence to the global minimiser. 
%In Section~\ref{sec: numerics}, we will introduce a gradient descent-based algorithm that involves 
minimizes $\mathcal{R}(u)$ for a decreasing sequence of $\delta$.
%we are interested in choosing large values of $\delta$ at least at the beginning of our method (see Section~\ref{sec: numerics} for further details). 

We have the following result for the functional $\mathcal{R} (u)$.

\begin{theorem}
    \label{thm: main result}
    Let $H$ be a Hamiltonian satisfying \eqref{cond H}, and let $\Omega=(0,1)^d$, $d\in \N$.
    For any fixed $\alpha>0$ and $N\in \N$, set $\delta = 1/(N-1)$ and consider the functional $\mathcal{R}(\cdot)$ %$\mathcal{R}_{\alpha,\delta} (\cdot)$ 
    defined in \eqref{R alpha delta integral}.
    
    Let $u\in C(\overline{\Omega})$ be any critical point of $\mathcal{R} (\cdot)$ with a Lipschitz constant $L$.
    If \eqref{condition theorem} holds,
    then   
    \begin{equation} \label{finite-diffs Lax-Friedrichs continuous}
        \widehat{H}_\alpha (x, D_\delta^+ u(x), D_\delta^- u(x)) =0,~~~ \forall x\in\Omega.
    \end{equation}
\end{theorem}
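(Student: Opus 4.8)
The plan is to leverage the decomposition recorded immediately before the statement,
\[
\mathcal{R}(u) = \int_{(0,\delta)^d} \widehat{\mathcal{R}}(u; z)\, dz ,
\]
which exhibits $\mathcal{R}$ as an average, over the shift parameter $z \in (0,\delta)^d$, of discrete functionals $\widehat{\mathcal{R}}(\cdot\,; z)$ living on the translated grids $\Omega_\delta(z) = (\delta\Z^d + z)\cap\Omega$ --- each of exactly the type handled in Theorem~\ref{thm: uniqueness finite-diff}. Writing $w(x) = \widehat{H}_\alpha(x, D_\delta^+ u(x), D_\delta^- u(x))$, $U(z) = u|_{\Omega_\delta(z)}$ and $W(z) = w|_{\Omega_\delta(z)}$, the goal is to show that the single scalar condition ``$u$ is a critical point of $\mathcal{R}$'' forces, \emph{separately for every} $z$, the discrete adjoint system $A_{\alpha,\delta}(U(z))\,W(z) = 0$ of \eqref{adjoint eq intro} on $\Omega_\delta(z)$. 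Granting this, the invertibility of $A_{\alpha,\delta}$ supplied by Theorem~\ref{thm: uniqueness finite-diff} gives $W(z) = 0$ for each $z$, and since every point of $\Omega$ lies (up to a measure-zero set) on one such grid, this yields $w\equiv 0$ on $\Omega$, i.e.\ \eqref{finite-diffs Lax-Friedrichs continuous}.

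To obtain the grid-wise condition I would compute the Fréchet derivative of $\mathcal{R}$ at $u$ in an arbitrary direction $v\in C(\overline{\Omega})$. Since $\mathcal{R}(u) = \int_\Omega w(x)^2\,dx$, the chain rule and the definitions of $D_\delta^\pm$ give
\[
D_v\mathcal{R}(u) = \frac{2}{\delta}\int_\Omega w(x)\sum_{i=1}^d\Big[ a_i^+(x)\big(v(x+\delta e_i)-v(x)\big) + a_i^-(x)\big(v(x)-v(x-\delta e_i)\big)\Big]\,dx ,
\]
where $a_i^\pm(x) := \partial_{p_i^\pm}\widehat{H}_\alpha(x, D_\delta^+u(x), D_\delta^-u(x))$, and from \eqref{H_LxF} one has $a_i^-(x) - a_i^+(x) = \alpha$. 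I would then shift every evaluation of $v$ to a common point through the changes of variables $x \mapsto x\pm\delta e_i$ (a discrete integration by parts) and collect the coefficient of $v(y)$, arriving at $D_v\mathcal{R}(u) = \tfrac{2}{\delta}\int_\Omega g(y)\,v(y)\,dy$ with
\[
g(y) = \alpha\,d\, w(y) + \sum_{i=1}^d\Big[ a_i^+(y-\delta e_i)\,w(y-\delta e_i)\,\mathbf{1}_{\{y-\delta e_i\in\Omega\}} - a_i^-(y+\delta e_i)\,w(y+\delta e_i)\,\mathbf{1}_{\{y+\delta e_i\in\Omega\}}\Big] .
\]
The decisive structural feature is that $g(y)$ couples $w$ only at $y$ and its grid neighbours $y\pm\delta e_i$, which all lie in the \emph{same} coset $y+\delta\Z^d$; moreover the indicator factors produced by the domain restrictions in the change of variables are exactly the Dirichlet-type (``ghost node'') boundary treatment encoded in the operator $A_{\alpha,\delta}$ of Theorem~\ref{thm: uniqueness finite-diff}.

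Since $u$ is critical, $D_v\mathcal{R}(u) = 0$ for all $v$, so the fundamental lemma of the calculus of variations gives $g\equiv 0$ a.e.\ in $\Omega$; continuity of $w$ and of the coefficients $a_i^\pm$ (from \eqref{cond H} and $u\in C(\overline{\Omega})$) upgrades this to $g(y)=0$ for every $y$. Restricting to a fixed coset reproduces precisely the homogeneous system $A_{\alpha,\delta}(U(z))\,W(z) = 0$. To close the argument I would invoke the invertibility step of Theorem~\ref{thm: uniqueness finite-diff}: the choice $\delta = 1/(N-1)$ makes every shifted grid $\Omega_\delta(z)$, $z\in(0,\delta)^d$, carry the same number of nodes per coordinate direction, so all the operators $A_{\alpha,\delta}(U(z))$ share the same structure, and since $u$ --- hence each $U(z)$ --- has Lipschitz constant $L$, condition \eqref{condition theorem} (equivalently \eqref{eq:equiv_cond_th_1}, through the smallest eigenvalue of the discrete Dirichlet Laplacian) forces invertibility uniformly in $z$. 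Thus $W(z)=0$ for all $z$, so $w$ vanishes on every shifted grid; as these sweep out a dense subset of $\Omega$, continuity of $w$ gives $w\equiv 0$ on all of $\Omega$.

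The step I expect to be the main obstacle is the passage from the single integrated identity $D_v\mathcal{R}(u)=0$ to the pointwise, grid-by-grid system: one must check that the finite-difference stencil genuinely decouples across the shift parameter --- so that testing against arbitrary continuous $v$ does localize to each coset --- and that the boundary indicator terms reproduce the exact ghost-node convention baked into $A_{\alpha,\delta}$, including the delicate handling of points within one step of $\partial\Omega$ where the stencil leaves the domain. A secondary but essential point is the \emph{uniformity} of the invertibility across all shifts $z$, which is exactly the reason the statement replaces $\delta = 1/N$ by $\delta = 1/(N-1)$.
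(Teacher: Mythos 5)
Your proposal is correct and follows essentially the same route as the paper: you use the identical decomposition $\mathcal{R}(u)=\int_{(0,\delta)^d}\widehat{\mathcal{R}}(u;z)\,dz$ over shifted grids, pass from criticality of $\mathcal{R}$ to criticality of each $\widehat{\mathcal{R}}(\cdot\,;z)$ by testing against arbitrary $v\in C(\overline{\Omega})$ and localizing to each coset, and then invoke the invertibility of $A_{\alpha,\delta}(U(z))$ from Theorem~\ref{thm: uniqueness finite-diff} to conclude $W(z)=0$ for every $z$. Your explicit discrete integration by parts and appeal to the fundamental lemma of the calculus of variations is just a more detailed rendering of the paper's step ``$\int_{(-\delta,0)^d}\nabla F_z(U(z))\cdot V(z)\,dz=0$ for all $v$ implies $\nabla F_z(U(z))=0$ for all $z$,'' so the two arguments coincide in substance.
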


The proof is given in Section~\ref{subsec: proof main thm}. 

We make the following remarks concerning the above results: 
\begin{enumerate}
    \item Theorems  \ref{thm: uniqueness finite-diff} and \ref{thm: main result} are local results in the sense that they only ensure that a critical point $u\in C(\overline{\Omega})$ of $\widehat{\mathcal{R}} (\cdot)$ solves the finite-difference equation \eqref{finite-diffs Lax-Friedrichs intro}, provided that the Lipschitz constant of $u$ is smaller or equal than $L$. They do not rule out the possibility of critical points with larger Lipschitz constants failing to satisfy \eqref{finite-diffs Lax-Friedrichs intro}.
    However, one can take $L>0$ as large as needed, at the expense of increasing $\alpha$ and $\delta$ in such a way that \eqref{condition theorem} holds.
    \item Choosing large values for $\alpha$ and $\delta$ has the drawback of larger errors in approximating the Hamilton-Jacobi equation due to the numerical diffusion. 
    {In a sense, there is no surprise in Theorem~\ref{thm: uniqueness finite-diff}.  
    If $\alpha\sim \mathcal{O}(\delta^{-2})$, the resulting numerical scheme is consistent with the Laplace equation! This is a linear equation, and thus, the associated least square principle is a strictly convex functional. Therefore, one should only use this condition away from the asymptotic regime.}
    Introducing a lot of numerical diffusion,  implies that minimisers of $\widehat{\mathcal{R}} (\cdot)$ (resp. $\mathcal{R} (\cdot)$) are over-regularised approximations of the actual viscosity solution.
    Since we will use an iterative gradient-based method to minimise the functional (as it is typically done in Deep Learning), we can choose large values of $\alpha$ and $\delta$ in the initial stage of optimization to ensure global convergence to the minimiser and successively reduce $\alpha$ and $\delta$ in later stages. See section \ref{sec:algorithms} for a detailed description of the proposed algorithm.
    %This, of course, is undesirable as the minimiser will be over-smoothed. 
    %As we explain in Section~\ref{sec: numerics}, and will be observed in the numerical experiments, a possible strategy to attain a global minimiser for $\alpha$ and $\delta$ small is to sequentially decrease the values of $\alpha$ and $\delta$ throughout the gradient iterations.
    % \item {\color{blue} As \eqref{eq:equiv_cond_th_1} suggests, the relation between $\alpha$ and $\delta$ really is determined by the spectrum of the discrete laplacian. Additional supervised data in the domain's interior could potentially alleviate the formal asymptotic scaling requirement for $\alpha$.  }
    % \item In Theorem~\ref{thm: main result},
    %\delta = 1/(N-1)$, as opposed to $\delta = 1/N$ in Theorem \ref{thm: uniqueness finite-diff}. This difference is only technical. If $\delta = 1/N$, then the grid $\Omega_\delta = \delta\Z^d \cap \Omega$ contains $(N-1)^d$ points, whereas for any $z\in (0,\delta)^d$, the shifted grid $\Omega_\delta(z) = (\delta\Z^d +z) \cap \Omega$ has $N^d$ points. 
    %If $\delta = 1/(N-1)$, then the grid $\Omega_\delta(z)$ has $(N-1)^d$ points, and thus one can use the same condition \eqref{condition theorem} from Theorem \ref{thm: uniqueness finite-diff}.
    \item { A natural question is whether a continuous solution to \eqref{finite-diffs Lax-Friedrichs continuous} exists at all. There is indeed a constructive way to prove existence of such continuous solutions. Consider the family of grids $\Omega_\delta (z) : =(\delta \Z^d + z) \cap \Omega$, for $z\in [0,\delta)^d$. This is simply a shifted version of the original grid $\Omega_\delta$. Now, for each $z$, there is a solution to the discrete equation \eqref{finite-diffs Lax-Friedrichs intro} on the grid $\Omega_\delta (z)$, and this solution depends continuously on the parameter $z$. Then, one can construct the function $u(x)$ which associates, to every $x\in \Omega$, the solution of the discrete equation \eqref{finite-diffs Lax-Friedrichs intro} on the unique grid $\Omega_\delta (z)$ such that $x\in \Omega_\delta (z)$. This function solves \eqref{finite-diffs Lax-Friedrichs continuous} and is continuous by construction.}
\end{enumerate}

\subsection{Uniqueness of critical points by regularisation of the Hamiltonian}

In Theorem \ref{thm: uniqueness finite-diff} (resp. Theorem \ref{thm: main result}), we provide a sufficient condition on the choice of the parameters $\alpha$ and $\delta$, ensuring that any critical point of the functional $\widehat{\mathcal{R}} (u)$ (resp. $\mathcal{R}(u)$) satisfies
\begin{equation}
\label{finite-diff eq ensuring uniqueness}
\widehat{H}_\alpha (x, D_\delta^+ u(x), D_\delta^- u(x)) = 0,
\qquad \forall x\in \Omega_\delta \ \text{(resp. $\Omega$).}
\end{equation}
This condition (see \eqref{condition theorem}) involves taking $\alpha$ and $\delta$ sufficiently large so that the system of equations associated with the first-order optimality condition, which is of the form $A_{\alpha, \delta} (U) W = 0,$ admits a unique solution, i.e. the trivial solution $W=0$.
See Section~\ref{sec: finite-difference proof} for further details.

An alternative way to ensure that any critical point of the functional solves the finite-difference equation \eqref{finite-diff eq ensuring uniqueness} is by adding a monotonically increasing term to the numerical Hamiltonian.
Let us consider the functional
$$
u \in C(\overline{\Omega})\longmapsto 
\widehat{\mathcal{R}} (u) :=
\delta^d \sum_{x\in \Omega_\delta} \left[ \widehat{H}_\alpha (x, D_\delta^+ u(x), D_\delta^- u(x)) + \tau u(x) \right]^2,
$$
for some $\tau>0$.
In this case, the first-order optimality condition is of the form 
$$
(A_{\alpha, \delta} (U) + \tau I_{\Omega_\delta}) W = 0,
$$
where $I_{\Omega_\delta}$ is the identity operator in the space of grid functions on $\Omega_\delta$,
and $W$ is the grid function associated to the function
$x\mapsto w(x) := \widehat{H}_\alpha (x, D_\delta^+ u(x), D_\delta^- u(x)) + \tau u(x)$.

Similar arguments to the ones in the proofs of Theorems \ref{thm: uniqueness finite-diff} and \ref{thm: main result} can be used to obtain the sufficient condition on the parameters $\alpha, \delta$ and $\tau$, i.e.
\begin{equation}
          2 \alpha \sin^2 \left(\frac{\pi}{2}\delta \right)+\frac{\tau}{d} > \max_{\substack{\|p\|\leq L \\
     x\in \overline{\Omega}}} \| \nabla_p H (x, p) \|, 
\end{equation}
ensuring that the matrix associated with the linear operator 
$A_{\alpha, \delta}(U) + \tau I_{\Omega_\delta}$
is {invertible}, for any grid function $U$ on $\Omega_\delta = \delta\Z\cap \Omega$ with Lipschitz constant $L$.
This in turn implies that, for any function $u$ with Lipschitz constant $L$, if $u$ is a critical point of $\widehat{\mathcal{R}} (u)$ 
then $u$ satisfies the finite-difference equation 
$$
\widehat{H}_\alpha (x, D_\delta^+ u(x), D_\delta^- u(x)) + \tau u(x) = 0,
\qquad \forall x\in \Omega_\delta.
$$
{However, in this paper, we predominantly consider the case $\tau\equiv 0$.}

\subsection{The least square principle involving a numerical Hamiltonian and supervised data}
\label{subsec: supervised data}

In the above section, we discussed functionals
involving a numerical Hamiltonian. We show that under appropriate conditions, any global minimiser approximates a viscosity solution of the Hamilton Jacobi equation
\begin{equation}
\label{HJ eq supervised data}
 H(x, \nabla u) = 0, \qquad \text{in} \ \Omega.   
\end{equation}
The uniqueness of the viscosity solution to the partial differential equation \eqref{HJ eq supervised data} typically involves additional (suitable) boundary conditions. 
Likewise, our ultimate least square formulation has to incorporate
boundary data.

%For simplicity, consider $\Omega = (0,1)^d$ and functions defined on 
{
For a given $\delta>0$, let us consider the uniform Cartesian grid $\Omega_\delta : = \delta \Z^d\cap\Omega$, where $\delta = 1/N$ for some $N\in \N$.
%Recall that we denote the interior points of the numerical domain by $\Omega_\delta := \delta\Z \cap \Omega$.
Similarly, we define the numerical boundary as
\begin{equation}
\label{numerical boundary}
\partial\Omega_\delta := \{x \in \delta \mathbb{Z}^d\setminus \Omega_\delta \, : \ \min_{y\in\partial\Omega} |x-y| < \sqrt{d}~\delta\}.
\end{equation}
In order to set the boundary condition on the numerical boundary, the function $g:\partial \Omega \to \R$ in \eqref{HJ PDE intro} needs to be extended to a $\delta$-neighbourhood of the boundary, defined as
\begin{equation}
\label{boundary neighbourhood}
\Gamma_\delta := \{ x\in \R^d\setminus \Omega \, : \ \min_{y\in\partial\Omega} |x-y| < \sqrt{d}~\delta \}. 
\end{equation}
Since $g$ is a continuous function, the extension of $g$ to $\Gamma_\delta$ can be done in a continuous way.
}

%Note that $\partial \Omega_\delta$ has to be redefined appropriately for more general domains.  

For the parameters $\alpha, \gamma >0$, we define the functional
\begin{equation}
\label{discrete func boundary}
\widehat{\mathcal{J}} (u) :=
\delta^d \sum_{x\in \Omega_\delta} \left[ \widehat{H}_\alpha (x, D_\delta^+ u(x), D_\delta^- u(x)) \right]^2 + \gamma \delta^{d-1} \sum_{x\in \partial \Omega_\delta} \left(  u(x) - g(x) \right)^2,
\end{equation}
where $\widehat{H}_\alpha$ is the Lax-Friedrichs numerical Hamiltonian defined in \eqref{H_LxF} and $g$ is the boundary condition in~\eqref{BVP intro}, continuously extended to $\Gamma_\delta$.
The following result is a consequence of Theorem \ref{thm: uniqueness finite-diff}.

\begin{corollary}
\label{cor: uniqueness boundary data}
    Let $H$ be a Hamiltonian satisfying \eqref{cond H}, $\Omega=(0,1)^d$,  $d\in \N$, and $g\in C(\partial\Omega)$.
    For any $N\in \N$, set $\delta = 1/N$ and the uniform Cartesian grid $\Omega_\delta := \delta \Z^d\cap \Omega$, with the associated numerical boundary {$\partial\Omega_\delta$ defined in \eqref{numerical boundary}, and consider a continuous extension of $g$ to $\Gamma_\delta$.}
    For the parameters $\alpha, \gamma>0$, consider the functional $\widehat{\mathcal{J}} (\cdot)$ defined in \eqref{discrete func boundary}.
    Let $u\in C(\overline{\Omega})$ be any critical point of $\widehat{\mathcal{J}} (\cdot)$ with Lipschitz constant $L$.
    If \eqref{condition theorem} holds, 
    then   
    \begin{equation} \label{finite-diffs Lax-Friedrichs with boundary}
    \begin{cases}
        \widehat{H}_\alpha (x, D_\delta^+ u(x), D_\delta^- u(x)) =0, & x\in\Omega_\delta, \\
        u(x) = g(x), & x\in \partial\Omega_\delta.
    \end{cases}
    \end{equation}
\end{corollary}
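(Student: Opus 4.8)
The plan is to reduce the statement to Theorem~\ref{thm: uniqueness finite-diff} by exploiting the fact that the two terms of $\widehat{\mathcal{J}}(\cdot)$ decouple cleanly with respect to the interior and the boundary grid values. Since $u\in C(\overline{\Omega})$ is a critical point, every directional Fr\'echet derivative of $\widehat{\mathcal{J}}$ vanishes, and because $\widehat{\mathcal{J}}$ depends on $u$ only through its values on the finite set $\Omega_\delta\cup\partial\Omega_\delta$ (every node entering an interior stencil lies within $\sqrt{d}\,\delta$ of $\partial\Omega$, hence in $\partial\Omega_\delta$), this reduces to the finite-dimensional condition $\nabla\widehat{\mathcal{J}}=0$. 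First I would write out this system as one scalar equation per free grid value and split it into two groups: the equations obtained by differentiating with respect to an interior value $u(x)$ with $x\in\Omega_\delta$, and those obtained by differentiating with respect to a boundary value $u(x)$ with $x\in\partial\Omega_\delta$.

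The key observation for the interior group is that the boundary penalty $\gamma\delta^{d-1}\sum_{x\in\partial\Omega_\delta}(u(x)-g(x))^2$ does not depend on any interior value, so its contribution to $\partial_{u(x)}\widehat{\mathcal{J}}$ vanishes identically for $x\in\Omega_\delta$. Consequently the interior equations coincide \emph{exactly} with the optimality system for $\widehat{\mathcal{R}}(\cdot)$ treated in Theorem~\ref{thm: uniqueness finite-diff}: they read $A_{\alpha,\delta}(U)\,W=0$ with the very same operator $A_{\alpha,\delta}(U)$ and residual vector $W$, since the interior-to-interior block of the Jacobian of the residual map $u\mapsto w$ is unaffected both by the presence of the penalty and by the boundary nodes now being free. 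I would then invoke the invertibility of $A_{\alpha,\delta}(U)$ established in the proof of Theorem~\ref{thm: uniqueness finite-diff}, which holds under \eqref{condition theorem} for any $u$ with Lipschitz constant at most $L$. This forces $W=0$, i.e. $\widehat{H}_\alpha(x,D_\delta^+ u(x),D_\delta^- u(x))=0$ for all $x\in\Omega_\delta$, which is the first line of \eqref{finite-diffs Lax-Friedrichs with boundary}.

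It remains to handle the boundary group. Differentiating $\widehat{\mathcal{J}}$ with respect to a boundary value $u(x)$, $x\in\partial\Omega_\delta$, produces two contributions: the term $2\gamma\delta^{d-1}(u(x)-g(x))$ from the penalty, and a term of the form $2\delta^d\sum_{y\in\Omega_\delta} w(y)\,\partial_{u(x)}w(y)$ coming from those interior stencils that reach $x$. Substituting the already-established identity $W=0$ (so $w(y)=0$ for every $y\in\Omega_\delta$) annihilates the second contribution, and the optimality equation collapses to $u(x)=g(x)$ for every $x\in\partial\Omega_\delta$, which is the second line of \eqref{finite-diffs Lax-Friedrichs with boundary}.

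The main obstacle I anticipate is bookkeeping rather than conceptual: one must verify carefully that the interior block of the optimality system for $\widehat{\mathcal{J}}$ is literally the operator $A_{\alpha,\delta}(U)$ of Theorem~\ref{thm: uniqueness finite-diff}, i.e. that enlarging the set of free parameters to include the boundary nodes does not alter the coupling among the interior residuals, and that the invertibility argument applies verbatim under the Lipschitz bound $L$ and condition \eqref{condition theorem}. Once this identification is secured, the order of the argument is essential: the interior equations must be solved first to obtain $W=0$, and only then do the boundary equations simplify to the desired boundary-matching condition.
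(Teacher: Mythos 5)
Your proposal is correct and follows essentially the same route as the paper's proof: decompose $\widehat{\mathcal{J}}$ into the interior residual functional plus the boundary penalty, note that the penalty is independent of the interior grid values so the interior optimality block is exactly the system $A_{\alpha,\delta}(U)W=0$ of Theorem~\ref{thm: uniqueness finite-diff}, conclude $W=0$ from invertibility under \eqref{condition theorem}, and then observe that the boundary equations collapse to $u(x)=g(x)$ once the interior residuals vanish. The order of the two steps and the justification at each stage match the paper's argument.
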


{By the monotonicity of the numerical Hamiltonian, the system \eqref{finite-diffs Lax-Friedrichs with boundary} has at most one solution. Hence, the above corollary proves that the functional $\widehat{\mathcal{J}} (\cdot)$ has at most one critical point with Lipschitz constant $L$. This uniqueness result is up to the continuous extension of $g$ from $\partial\Omega$ to $\Gamma_\delta$. That being said, note that in the limit when $\delta\to 0^+$, the choice of this extension is irrelevant.}

The proof of this corollary can be found in Section~\ref{subsec: proof of thm discrete func}. 
Under the Assumption \ref{assump: Lipschitz}, and choosing $\alpha>0$ such that the numerical Hamiltonian $\widehat{H}_\alpha$ is monotone,
Corollary \ref{cor: uniqueness boundary data} can be used in the following way:
if one is able to construct an equicontinuous sequence $u_n\in C(\overline{\Omega})$ minimising the functional $\widehat{\mathcal{J}} (\cdot)$, with $\delta_n\to 0^+$, then $u_n (x) \to u^\ast(x)$ uniformly in $\Omega$, where $u^\ast$ is the unique viscosity solution to \eqref{BVP intro}.
Of course, the condition \eqref{condition theorem} only holds if $\delta$ is large enough. As we will see in Section~\ref{sec: numerics}, one can start by training a neural network with a large value of $\delta$, ensuring that any critical point satisfies \eqref{finite-diffs Lax-Friedrichs with boundary}, and then re-train the neural network with decreasing values of $\delta$.

A similar result holds for the functional on $C(\overline{\Omega})$ given by
\begin{equation*}
\mathcal{J} (u) := 
\int_\Omega 
\left[ \widehat{H}_\alpha (x, D_\delta^+ u(x), D_\delta^- u(x)) \right]^2 d\rho(x)
+ \gamma \int_{\Gamma_\delta} \left( u(x) - g(x) \right)^2 d\mu(x),  
\end{equation*}
where $\rho$ and $\mu$ are probability measures on $\Omega$ and $\Gamma_\delta$ respectively,  {and $\Gamma_\delta$ is defined in \eqref{boundary neighbourhood}.}

In this paper, we propose to minimize such least square principle via a stochastic gradient descent algorithm. This means that the integrals will be approximated by the Monte Carlo method.
The involvement of the Monte-Carlo method requires deciding on a probability distribution $\rho$ over $\Omega$, and, as we will show in Section \ref{subsec: data distribution experiments}, the choice of this distribution can depend on the error metric of choice.

{Ultimately, we shall construct a sequence $u_k$ that minimizes $\mathcal{J}$
for a sequence of $\delta_k$ that decreases to $0$. Also, since we shall consider functions given by a NN, which are continuous by construction, and the continuous extension of the boundary condition to $\Gamma_\delta$ is irrelevant in the limit $\delta\to 0^+$, for convenience, we will consider
\begin{equation}\label{eq:J-with-boundary-data}
\mathcal{J} (u) := 
\int_\Omega 
\left[ \widehat{H}_\alpha (x, D_\delta^+ u(x), D_\delta^- u(x)) \right]^2 d\rho(x)
+ \gamma \int_{\partial\Omega} \left( u(x) - g(x) \right)^2 d\mu(x),
\end{equation}
where $\mu$ is a probability measure on $\partial\Omega$.
}

% \subsubsection*{Hyperparameters $\alpha$ and $\delta$}

% \textcolor{blue}{ I propose that we remove this subsubsection. }

% \textcolor{blue}{ 
% Theorem~\ref{thm: main result} provides a guideline to choose $\alpha$ according to $\delta$ and the eigenvalues 
% We want to emphasize a few points:
% 1) Theorem 1 applied to  that if is sufficiently large,
% all critical points of 
% $\mathcal{J}=\mathcal{J_{\alpha(\delta),\delta}}$ are solutions to the finite difference equation.}

% But in practice, one does not have to have the numerical diffusion dominating the computed solutions. ...

% It makes sense, and that is what we propose in the next section, to use 
% an appropriate choice of $\alpha_0=\alpha(\delta_0)$ to construct
% a solution as an initial guess for a smaller $\alpha$. This means that we first get into a locally convex neighbourhood of the loss's global minimum.

% 2) From Theorem~1, we see that the essential relation between $\alpha$, $\delta$, and the Hamiltonian depends on the shape of the domain.
% The availability of appropriate non-characteristic ``supervised'' data points can be viewed as changing the domain shape and thus affect the minimal value of $\alpha$ as a function of $\delta$ and $H$.

\paragraph{Incorporating additional labelled data}

Finally, in many applications, labelled data is available in the interior of $\Omega$. {Let us denote this interior set by $\Gamma\subset \Omega$, and the associated data by the function $h\in C(\Gamma)$.} We can of course introduce an additional data-fitting term to form a least square principle as follows:
\begin{equation}\label{def:J-BVP}
    \mathcal{J}(u)
    %\mathcal{J}_{\alpha, \delta, \gamma} (u;\rho)
:= \int_\Omega 
\left[ \widehat{H}_\alpha (x, D_\delta^+ u(x), D_\delta^- u(x)) \right]^2 d\rho(x)
+ \gamma_1 \int_{\partial\Omega} \left[ u(x) - g(x) \right]^2 d\mu(x)
+ \gamma_2\int_{\Gamma} \left[ u(x) - h (x) \right]^2 d\nu(x),
\end{equation}
where $\rho$ is a probability measure on $\Omega$, $\mu$ and $\nu$ are probability measures on $\partial\Omega$ and $\Gamma$.

{The addition of additional labelled data, a.k.a. supervised data, changes the optimization landscape and can influence the properties of the critical points.    
Indeed, if one interprets $\Gamma$ as part of the boundary of $\Omega$, the associated discrete Laplacian has larger eigenvalues, and one may employ smaller values of $\delta$, while \eqref{eq:equiv_cond_th_1} still holds true. See Section \ref{subsec: adding supervised data} for experiments showing this phenomenon.}

\subsection{Time dependent problems}
\label{subsec: time-dependent pbms}

There is no difficulty in extending our analysis to the time-dependent case:
\begin{equation}
\label{time-evol HJ}
\begin{cases}
\partial_t u + H(x, \nabla u)=0 & \Omega\times(0,T), \\
u(x,t)=g(x) &  \partial\Omega \times (0,T), \\
u(x,0) = u_0(x) & \Omega,
\end{cases}
\end{equation}
with $T\in (0, \infty]$, initial data
$u_0:\Omega \to \R$ and boundary data $g: \Omega\to \R$.

In this case, a suitable choice for the least squares approach is to use the following finite-difference scheme
\begin{equation}
\label{finite-diffs time dep}
u(x, t+\delta_t) = u(x, t) - \delta_t \widehat{H}_\alpha (x, D_{\delta_x}^+ u(x, t), D_{\delta_x}^- u(x, t)),
\end{equation}
for some $\delta_t, \delta_x, \alpha >0$,
which combines an explicit Euler scheme for the time derivative and a Lax-Friedrichs numerical Hamiltonian.

The least square finite-difference functional associated to the above numerical scheme is
\begin{equation}
\label{R(u) time dep}
\mathcal{R} (u) :=
\int_0^T \int_\Omega \left[ u(x, t+\delta_t) - u(x, t) + \delta_t \widehat{H}_\alpha (x, D_{\delta_x}^+ u(x, t), D_{\delta_x}^- u(x, t)) \right]^2 dx,
\end{equation}
and can be coupled with the initial and boundary data as
\begin{equation}\label{def:J_IBVP}
    \mathcal{J}(u) := \mathcal{R}(u) + 
\gamma_b \int_0^T \int_{\partial\Omega} \left( u(x,t) - g(x) \right)^2 dx + \gamma_0 \int_\Omega \left( u(x,0) - u_0(x) \right)^2 dx.
\end{equation}
A similar analysis of the critical points can be carried out for the above functional.
Again, the conclusion is that by taking $\alpha$ and $\delta_x$ sufficiently large, one can ensure that any critical point satisfies the finite-difference equation associated with the numerical scheme \eqref{finite-diffs time dep}.
In Section~\ref{subsec: numerics time-dependent}, we show some numerical experiments, where we apply our least squares approach to time evolution problems of the form \eqref{time-evol HJ}.

\section{Numerics}
\label{sec: numerics}

As discussed in the introduction, the purpose of formulating the numerical method for \eqref{BVP intro} as a minimisation problem is that one can approximate the solution using a Neural Network trained through stochastic gradient descent (SGD).
To enhance converge to the viscosity solution, we use the residual of a consistent and monotone Lax-Friedrichs numerical Hamiltonian in the loss functional, which ensures that any global minimiser approximates a viscosity solution.
Due to the use of stochastic gradient descent, it is of utmost importance that any critical point of the function is a global minimiser.
However, in view of our main results Theorem \ref{thm: uniqueness finite-diff} and Theorem \ref{thm: main result}, 
we can only ensure this by considering a numerical Hamiltonian that incorporates enough numerical diffusion ($\alpha$ and $\delta$ big enough in \eqref{R hat delta} or \eqref{R alpha delta integral}). 
The drawback is that the global minimiser of the resulting minimisation problem is an over-smoothed approximation of the actual viscosity solution.
Next, we describe an algorithm to approximate the global minimiser of a Lax-Friedrichs finite-difference loss with small parameters $\alpha$ and $\delta$. The idea is to first apply SGD with large values of $\alpha$ and $\delta$, ensuring convergence to the global minimiser, and then decrease these parameters to carry on with more SGD iterations and refine the approximation of the solution.

\subsection{The proposed algorithms}\label{sec:algorithms}

Let us consider any NN function $\Phi: \overline{\Omega}\times \Theta \to \R$, where $\Theta\subset \R^P$ is the parameter space and $P$ is the number of parameters. We define the parameterized family of functions
\begin{equation}\label{family of functions}
\mathcal{F} := \{x\mapsto
\Phi(x; \theta) \ : \quad \theta \in \Theta
\} \subset C(\overline{\Omega}).
\end{equation}
Our numerical solution of \eqref{BVP intro} will be a function of the form $u(x) = \Phi (x; \theta^\ast)$, where $\theta^\ast\in \Theta$ is obtained by minimising the function
% \begin{equation}
% \label{functional numerics}
% \theta \longmapsto \mathcal{J} (\Phi (\cdot; \theta)) := \gamma \int_{\partial\Omega} \left( \Phi (x;\theta) - g(x) \right)^2 dx + \int_\Omega \left[ \widehat{H}_\alpha (x, D_\delta^+ u(x), D_\delta^- u(x)) \right]^2 dx.
% \end{equation}
\begin{equation}
\theta \longmapsto \mathcal{J} (\Phi (\cdot; \theta)),
\end{equation}
and $\mathcal{J}$ is defined in \eqref{def:J-BVP} (or in \eqref{def:J_IBVP} for time-dependent Hamilton-Jacobi equations).
%Here, $\alpha, \delta$ and $\gamma$ are positive constants suitably chosen.
The minimisation algorithm to obtain $\theta^\ast$ is a version of SGD in which, at every iteration, the functional $\mathcal{J}$ in \eqref{def:J-BVP} is approximated by a Monte Carlo approximation of the integrals.

Let $\mathcal{D}_0$, $\mathcal{D}_{b_1}$, and $\mathcal{D}_{b_2}$ be the chosen probability distributions over $\Omega$, $\partial\Omega$, and $\Gamma$ respectively. 
Our algorithms are built on the typical stochastic gradient descent algorithm: 
\begin{equation}\label{functional numerics}
\theta^{k+1} = \theta^k - 
\eta_k \left(\nabla_\theta \widetilde{\mathcal{L}} (\Phi (\cdot; \theta);\gamma_1,\gamma_2) +  \widetilde{\mathcal{R}} (\Phi (\cdot; \theta))\right),
%\widetilde{\mathcal{J}}(\Phi(\cdot, \theta^k)),
\end{equation}
with
%$$\widetilde{\mathcal{J}} = \widetilde{\mathcal{L}} + \widetilde{\mathcal{R}}, $$
\begin{eqnarray*}
%\widetilde{\mathcal{J}} &:=& \widetilde{\mathcal{L}} + \widetilde{\mathcal{R}}, \\
\widetilde{\mathcal{L}} (\Phi (\cdot; \theta);\gamma_1,\gamma_2) &:=& 
\dfrac{\gamma_1}{N_{b_1}} \sum_{x\in X_{b_1}} \left( \Phi (x;\theta) - g(x) \right)^2 + \dfrac{\gamma_2}{N_{b_2}} \sum_{x\in X_{b_2}} \left( \Phi (x;\theta) - h(x) \right)^2, \\
 \widetilde{\mathcal{R}} (\Phi (\cdot; \theta)) &:=&
\dfrac{1}{N_0} \sum_{x\in X} \left[ \widehat{H}_\alpha (x, D_\delta^+ u(x), D_\delta^- u(x)) \right]^2.  
\end{eqnarray*}
Here, $X$ contains $N_0$ i.i.d. samples from $\mathcal{D}_0$, $X_{b_1}$ contains $N_{b_1}$ i.i.d. samples from $\mathcal{D}_{b_1}$, and $X_{b_2}$ contains $N_{b_2}$ i.i.d. samples from $\mathcal{D}_{b_2}$. The sets $X,X_{b_1},$ and $X_{b_2}$ are \emph{resampled} in each iteration.
In the Deep Learning community, the i.i.d. samples $X\subset \Omega$, $X_{b_1}\subset \partial\Omega$ and $X_{b_2}\subset \Gamma$ are often referred to as \textit{mini-batches}, and $N_0$, $N_{b_1}$ and $N_{b_2}$ denote the batch-size for each mini-batch.

A possible choice for the probability distributions $\mathcal{D}$, $\mathcal{D}_{b_1}$ and $\mathcal{D}_{b_2}$ are the uniform distributions over $\Omega$, $\partial\Omega$, and $\Gamma$ respectively. However, as we observe in the numerical experiments in section \ref{subsec: data distribution experiments}, other distributions can be more efficient, especially in high-dimensional domains. 

\begin{algorithm}\label{algorithm:alg_1}
\caption{SGD for Lax-Friedrichs finite-difference loss}\label{alg: SGD}
\begin{algorithmic}
\Require $\gamma_1,\gamma_2> 0$, $\{ \eta_k \}_{k= 0}^K \in (0,\infty)^K$, $N_0,N_{b_1}, N_{b_2}\in \N$, $\mathcal{D}_0\in \mathcal{P}(\Omega)$, $\mathcal{D}_{b_1}\in \mathcal{P}(\partial\Omega)$, and $\mathcal{D}_{b_2}\in \mathcal{P}(\Gamma)$.
\State \textbf{Input:} $\theta_0\in \Theta$ and $\alpha,\delta>0$.
\Procedure{SGDLxF}{$\theta_0, \alpha, \delta$}

%\State $\theta\gets \theta_0$
\State $k\gets 0$
\While{$k < K$}
\State $X\gets$ $N_0$ i.i.d. samples from $\mathcal{D}_0$
\State $X_{b_1}\gets $ $N_{b_1}$ i.i.d. samples from $\mathcal{D}_{b_1}$
\State $X_{b_2}\gets $ $N_{b_2}$ i.i.d. samples from $\mathcal{D}_{b_2}$

\State $\theta^{k+1} := 
\theta^k - 
\eta_{k} \left( \nabla_{\theta} \widetilde{\mathcal{L}} (\Phi (\cdot; \theta^k);\gamma_1,\gamma_2) + \nabla_\theta \widetilde{\mathcal{R}}(\Phi (\cdot; \theta^k))\right)$
\State $k\gets k+1$
\EndWhile

\Return $\theta^K$
\EndProcedure
\end{algorithmic}
\end{algorithm}

% \begin{algorithm}
% \caption{SGD for Lax-Friedrichs finite-difference loss}\label{alg: SGD}
% \begin{algorithmic}
% \Require $\gamma> 0$, $\{ \eta_t \}_{t= 0}^K \in (0,\infty)^K$, $N,N_b\in \N$, $\mathcal{D}\in \mathcal{P}(\Omega)$ and $\mathcal{D}_b\in \mathcal{P}(\partial\Omega)$
% \State \textbf{Input:} $\theta_0\in \Theta$ and $\alpha,\delta>0$.
% \Procedure{SGDLxF}{$\theta_0, \alpha, \delta$}

% %\State $\theta\gets \theta_0$
% \State $k\gets0$
% \While{$k< K$}
% \State Generate $X\in \Omega^N$ i.i.d. from $\mathcal{D}$
% \State Generate $X_b\in (\partial\Omega)^{N_b}$ i.i.d. from $\mathcal{D}_b$
% \State $\theta_{k+1} := 
% \theta_k - 
% \eta_k \gamma \nabla_\theta \mathcal{L} (\Phi (\cdot; \theta_k), X_b) - \eta_k \nabla_\theta \mathcal{R}_{\alpha, \delta} (\Phi (\cdot; \theta_k), X)$
% \State $k\gets k+1$
% \EndWhile

% \Return $\theta_K$
% \EndProcedure
% \end{algorithmic}
% \end{algorithm}

If the NN is initialised to be close to a global minimiser, then the gradient descent iterations will converge to a global minimiser.
By restricting the parameters to a compact set, we can estimate the Lipschitz constant $L$.
% When we initialise the network's parameters, we cannot, in principle, ensure that the NN is close to the sought solution.
We can choose $\alpha := \alpha_0>0$ and $\delta := \delta_0>0$ accordingly to meet the condition set in  \eqref{condition theorem} for the Lax-Friedrichs numerical Hamiltonian $\widehat{H}_\alpha (x, D_\delta^+ u(x), D_\delta^- u(x))$ from \eqref{H_LxF}.

In view of Theorem \ref{thm: main result}, we can ensure that any critical point of 
$$
\mathcal{R}(u) = \mathcal{R}_{\alpha_0, \delta_0} (u) =
\int_\Omega \left[ \widehat{H}_{\alpha_0} (x, D_{\delta_0}^+ u(x), D_{\delta_0}^- u(x)) \right]^2 dx
$$
with Lipschitz constant $L>0$ satisfies 
\begin{equation}
\label{finite-diff eq numerics}
\widehat{H}_\alpha (x, D_\delta^+ u(x), D_\delta^- u(x))=0 \qquad  \text{for all} \quad x\in \Omega.
\end{equation}
Hence, the gradient descent iterations will converge to a function that solves (or approximately solves) the above equation.
Once the NN is trained, we can reduce the hyper-parameters $\alpha, \delta$ by choosing
$$
0< \alpha := \alpha_1 \lesssim \alpha_0
\quad \text{and} \quad 
0< \delta := \delta_1 \lesssim \delta_0
$$
and continue with the gradient descent iterations applied to the new functional $\mathcal{R}_{\alpha_1, \delta_1} (u)$.
{ Since the functional $\mathcal{R}(\cdot)$ depends continuously on the parameters $\alpha$ and $\delta$, and  the solutions of \eqref{finite-diff eq numerics} also depend continuously on these parameters,} we can ensure that, by initialising the NN close to a global minimiser of $\mathcal{R}_{\alpha_0, \delta_0} (u)$, the iterations will converge to a global minimiser of $\mathcal{R}_{\alpha_1, \delta_1} (u)$.
This procedure can be iterated with a reduction schedule for  $\alpha$ and $\delta$.
Whenever we reduce the parameters $\alpha$ and $\delta$, we obtain a function closer to the viscosity solution of \eqref{BVP intro}. This procedure can be interpreted as an approximation of the vanishing viscosity method, in which the viscosity solution to the boundary value problem \eqref{BVP intro} is obtained as the limit $\varepsilon\to 0^+$ of the viscous approximation \eqref{BVP-visc intro}.

\begin{algorithm}
\caption{Optimisation for a Lax-Friedrichs finite-difference loss with a progressive refinement schedule}\label{alg: training}
\begin{algorithmic}
\Require $M\in \N$, $L>0$, $\mathcal{F}$ as in \eqref{family of functions} and $\{  (\alpha_m, \delta_m)\}_{m=0}^M\in (0,\infty)^{2M}$.
\Ensure $(L, \alpha_0, \delta_0)$ satisfy \eqref{condition theorem} and $\alpha_{m+1}\leq\alpha_m$ and $\delta_{m+1}\leq\delta_m$ for all $1\le m \le M$.
\State Initialise $\theta_0\in \Theta$ in a way that $x\mapsto \Phi(x;\theta)$ has Lipschitz constant $L$. 
\State $m \gets 0$
\While{$m < M$}
\State $\theta_{m+1} :=\operatorname{SGDLxF}(\theta_m, \alpha_m, \delta_m)$ as defined in Algorithm \ref{alg: SGD}
\State $m\gets m+1$
\EndWhile

\Return $\theta_M$
\end{algorithmic}
\end{algorithm}

{
%Here, we apply a simple argument on how the total number of collocation points should increase with the dimension of the problem.

% We recall the following estimates derived in \cite{liu2024nearest}.
% Let $\mu$ be a probability measure defined on $\Omega\subset\mathbb{R}^d$.
% Assume that $\Omega$ is covered by a finite union of smaller hypercubes $\{\Omega_j\}_{j=1}^K$ and $\{\Omega_j\}_{j}$ overlap with each other only on sets of measure zero. Let $D_N$ be a set of $N$ i.i.d. collocation points drawn from $\mu$. 
% Then with probability greater than $p_0$, at least one of the samples lies in $\Omega_j$, $j=1,2\cdot, K$, if 
% \begin{equation}\label{eq:upper-bound-on-N}
%     N>\frac{2\ln{(\frac{2}{1-p_0})}+3P}{P^2},~~~P=\min_{j=1,2\cdots, K}\mu(\Omega_j).
% \end{equation}

% Notice that $P$ defines a notion of resolution of $D_N$, and 
% $N$ should scale like $\mathcal{O}(P^{-2})$ as $P\rightarrow 0$. .
% With the knowledge of a Lipschitz bound for the network during optimisation, one can control the generalization error by increasing $N$ accordingly.
% If $N$ is not sufficiently large,  
% more hypercubes will have no collocation point inside. 
% Consequently, the network will not be regulated directly in those subregions and can have worse generalization errors.

\subsubsection*{Relating $\delta$, the total number of collocations $N$ and the dimension $d$}
In this paper, we argued that using a finite difference scheme allows us to regulate the network for the viscosity solution. The effectiveness of the regularization directly depends on
the probability of the finite difference stencil sampling over the region where the characteristics should collide. This region corresponds to the location where the viscosity solution is not differentiable and is typically lower dimensional.
On the one hand, for a fixed number of collocation points randomly placed in the domain, larger $\delta$ in $\hat{H}$ increases the probability of the finite difference stencil regulating over that region. On the other hand, finite difference solutions with larger values of $\delta$ have worse approximation errors due to numerical diffusion.

In the following, we derive a lower bound on $\delta$ for a given total number of collocation points, $N$ used in training and the dimensionality of the problem.

We recall the following estimates derived in \cite{liu2024nearest}.
Assuming that $\mu$ has a density $\rho(x)$. Then, with a probability greater than $p_0$, the diameter of the Voronoi cell $V(x)$ for $x\in D_N$  
can be bounded as follows:
\begin{equation}\label{eq:diameter-of-Voronoi-cell}
    \text{diam}(V(x))\le 3\sqrt{d}\left( \frac{C_1+C_2\sqrt{2+N\ln(\frac{2}{1-p_0})}}{\rho(x)N}\right)^{1/d},
\end{equation}
where $C_1,C_2$ are two positive constants independent of $d$ and $N$.
The stencil of a finite difference scheme applied to $x$ will reach different Voronoi cells if $\delta>V(x)$.
Therefore, the range of influence of the finite difference residual will cover $\Omega$ if 
\begin{equation}
    \delta> \max_{x\in D_N}\text{diam}(V(x)).
\end{equation}

% Thus, \eqref{eq:diameter-of-Voronoi-cell} provides a way to determine how $\delta, N$ should scale with the dimension $d$ and the prescribed resolution $P$. 

\subsubsection*{The benefits of resampling}
Resampling of collocation points during the training process is essential to control the generalization error. It is also a simple way to avoid overfitting.

As we mentioned earlier, collocating a ReLU-activated network with a finite-difference equation directly regulates the network in the neighbourhood of each collocation point.
Thus, the resampling of collocation points effectively increases the total number of collocation points used in regulating the network while having much fewer gradient evaluations in each training step. The resulting approximation will have a more uniformly distributed error. 

Finally, as we showed above, having a larger total number of collocation points allows us to use smaller $\delta$ in the finite differences to improve the accuracy of the approximations.
}

\subsection{Computational experiments}

We test the efficacy of our algorithm on various canonical HJ equations posed in different dimensions. We start with the Eikonal equation posed on simple domains (Section~\ref{sec:Eikonal-experiments}). Then, we consider a time-evolution HJ equation of the form \eqref{time-evol HJ} (Section~\ref{subsec: numerics time-dependent}). Finally, we consider HJ equations arising in control problems and differential games associated with curvature-constrained dynamics.
In particular, we consider two problems associated with Reeds-Shepp's car model: the shortest path to a given target (Section~\ref{subsec: car OCP}) and the pursuit-evasion game (Section~\ref{subsec: pursuit evasion game}).
{ In Section \ref{sec: analysis}, we analyse, through numerical tests, various aspects of our proposed algorithm, such as data efficiency and the choice of the distribution for the collocation points in the PDE domain.
}

In all the experiments, we use a fully connected multi-layer perceptron (MLP) as NN architecture.
For a chosen number of hidden layers $\ell\in \N$ (NN depth) and a vector $[d_1, \ldots , d_\ell]\in \N^\ell$ representing the number of neurons in each hidden layer,
the network function can be written as
\begin{equation}
\label{MLP def}
\Phi (x ; \theta) = W_\ell y_\ell + b_\ell, \quad
\text{where} \quad
\begin{cases}
    y_i = \sigma\left( W_{i-1} y_{i-1} + b_{i-1} \right), 
    & \text{for} \ i = 1, \ldots , \ell \\
    y_0 = x.
\end{cases}
% \Phi (x ; \theta) :=
% W_\ell \sigma \left( W_{\ell-1} x \sigma \left( \cdots \sigma (W_0 x + b_0) \right) + b_{\ell-1} \right) + b_\ell,
\end{equation}
where $\theta := \{ (W_i, b_i) \}_{i=0}^\ell$ is the parameter of the NN.
The matrices $W_0\in \R^{d_1\times d}$, $\{W_i\}_{i=1}^{\ell-1}\in \prod_{i=1}^{\ell-1} \R^{d_{i+1}\times d_{i}}$ and $W_\ell\in \R^{d_\ell \times 1}$ are typically called weights, and the vectors $\{b_i\}_{i=0}^{\ell-1} \in \prod_{i=0}^{\ell-1} \R^{d_{i+1}}$ and the scalar $b_\ell\in \R$ are called biases.
The function $\sigma(\cdot)\in C(\R; \R^+)$ defined as $\sigma (s) := \max \{ 0,s  \}$ is the activation function, which acts component-wise on vectors.

% The NN function can also be written as
% $$
% \Phi (x ; \theta) = W_\ell y_\ell + b_\ell, \quad
% \text{where} \quad
% \begin{cases}
%     y_i = \sigma\left( W_{i-1} y_{i-1} + b_{i-1} \right), 
%     & \text{for} \ i = 1, \ldots , \ell \\
%     y_0 = x.
% \end{cases}
% $$
The NN architecture will be indicated in the experiment description for each case.
For instance, in the case of a fully connected MLP with three hidden layers, the architecture is denoted by $[d_1, d_2, d_3]$, where $d_1, d_2$ and $d_3$ represent the number of neurons in each hidden layer.
{ Investigating the optimal choice of the NN architecture is beyond the scope of this paper. Therefore, in all the experiments, we chose relatively simple architectures in which the number of neurons in the hidden layers is kept constant.
% As we have observed empirically, one of the advantages of our approach is that it works regardless of the choice of the NN architecture, provided that it is sufficiently expressible given the complexity of the target solution.
}
For the problems in Sections~\ref{subsec: car OCP} and \ref{subsec: pursuit evasion game}, we consider a slightly different architecture because the problem's domain requires the solution to be $2\pi$-periodic with respect to some of the variables.
This is achieved by considering a linear combination of trigonometric functions, for which the coefficients are given by MLPs as defined above.

%We have not explored other NN architectures. 
%Depending on the specific problem under consideration, the use of other NN architectures such as \textit{ResNet} might improve the performance of the method, however, this is left for future research.

%To test the accuracy of the numerical solutions computed by the proposed method, 
We consider various examples for which the viscosity solution is available in closed form.
Namely, we consider the Eikonal equation in simple domains such as a cube, a ball and an annulus, in different dimensions.
In this case, the viscosity solution corresponds to the distance function to the boundary and can be easily computed for the three aforementioned domains. This function is used as a ground truth to evaluate the accuracy of the solution provided by the trained NN.

We use two metrics to evaluate the accuracy: the Mean Square Error (MSE) and the $L^\infty$-error.
These are defined respectively as
\begin{equation}
\label{MSE and Linf def exact}
MSE (\Phi(\cdot; \theta)) \coloneq \dfrac{1}{|\Omega|} \int_\Omega | \Phi (x;\theta) - u(x) |^2 dx
\qquad \text{and} \qquad
E_\infty (\Phi(\cdot; \theta)) \coloneq \max_{x\in \overline{\Omega}} | \Phi (x;\theta) - u(x) |,
\end{equation}
where $u\in C(\overline{\Omega})$ is the viscosity solution used as ground truth.

Obviously, we cannot compute exactly the integral and the maximum in \eqref{MSE and Linf def exact}. We approximated it by the Monte Carlo method. For some $N\in \N$, let us define
\begin{align}
\label{MSE def}
\widetilde{MSE} (\Phi(\cdot; \theta)) := \dfrac{1}{N} \sum_{i=1}^N | \Phi (x_i;\theta) - u(x_i) |^2 \\
\label{Linf def}
\widetilde{E}_\infty (\Phi(\cdot; \theta)) := \max_{i = 0, \ldots , N} | \Phi (x_i;\theta) - u(x_i) |,
\end{align}
where $\{x_1, \ldots, x_N\}\subset \Omega$ is an i.i.d. uniform sampling in $\Omega$, and $x_0 = (0, 0, \ldots)\in \R^d$ is the origin.
Note that to approximate the $L^\infty$-error we add the origin to the sampling. Of course, this does not apply when $\Omega$ is an annulus.
When $\Omega$ is a cube or a ball, we observe that the highest error of the numerical solution is achieved near the origin, which is the furthest point from the boundary. Therefore, especially when the dimension is high, adding this point to the sampling improves the approximation of the $L^\infty$-error. In all the experiments below, we took the sample size for the Monte Carlo approximation as $N= 10^6$.

In the following sections, the run times for the experiments are recorded from running our PyTorch implementation on a desktop Dell with Intel Core i7-14700 CPU at 2.10GHz$\times 20$ with $15.8$ GiB of RAM.
The code is available at the following link:

\url{https://github.com/carlosesteveyague/HamiltonJacobi_LeastSquares_LxF_NNs}

\subsubsection{Eikonal equation}\label{sec:Eikonal-experiments}

To illustrate the algorithm presented in Section \ref{sec:algorithms}, we carry out some numerical experiments for the Eikonal equation in various $d$-dimensional domains:
\begin{equation}
\label{Eikonal numerical experiments}
\begin{cases}
    |\nabla u (x)|^2 = 1, &  x\in \Omega,  \\
    u(x) = 0, & x\in \partial\Omega.
\end{cases}
\end{equation}

Let us start with a simple example by considering the domain $\Omega:= (-3,3)^d$. The solution, in this case, corresponds to the distance function to the boundary of the $d$-dimensional cube and can be written as
\begin{equation}
\label{Eikonal ground truth}
u(x) = 3 - \|x\|_\infty, \qquad \forall x\in (-3,3)^d.
\end{equation}
We shall use the explicit solution as a ground truth to evaluate the accuracy of the numerical approximations. % in each experiment.

\paragraph{The two-dimensional case ($d=2$).}
We use Algorithm~\ref{alg: training}, with $M=4$, to train a fully connected Neural Network with one hidden layer with $20$ neurons. 
The parameters $\alpha$ and $\delta$ in each round/iteration are assigned according to the following schedules:
% \begin{equation}
%     \label{params alpha delta numerics}
%     \alpha \in \{ 2.5, 2, 1.5, 1, 0.5 \}
%     \quad  \text{and} \quad
%     \delta \in \{ 0.75, 0.5, 0.3, 0.1, 0.05\}.
% \end{equation}
\begin{equation}
    \label{params alpha delta numerics}
    (\alpha_0, \alpha_1, \alpha_2, \alpha_3, \alpha_4) =  ( 2.5, 2, 1.5, 1, 0.5 )
    \quad  \text{and} \quad
    (\delta_0, \delta_1, \delta_2, \delta_3, \delta_4) = ( 0.75, 0.5, 0.3, 0.1, 0.05).
\end{equation}
Observe that, after each round, both parameters $\alpha$ and $\delta$ are decreased.
This is designed to progressively reduce the effect of the numerical viscosity to obtain higher fidelity in the approximate solution in the final round.
In each round, we applied Algorithm \ref{alg: SGD} with batch size $N_0 = 60$ for the collocation points sampled uniformly in $\Omega$, and $N_{b_1} = 20$ for the boundary points, sampled uniformly in $\partial\Omega$. { We did not consider any supervised data in the interior of the domain, i.e. $N_{b_2} = 0$. Concerning the weighting parameter for the boundary loss, we used $\gamma_1 = 1$.} 
The number of SGD iterations in each of the five rounds is
$K = 1000$.

{We ran 10 numerical experiments with i.i.d. initializations and computed the average and the standard deviation of the Mean Square error (MSE)  \eqref{MSE def} and the $L^\infty$-error \eqref{Linf def} after each round in the training schedule.
The results are reported in Figure \ref{fig:Eikonal 2-d error evol} and Table \ref{tab: experiment 2-dimensional cube}. The solid lines in Figure \ref{fig:Eikonal 2-d error evol} represent the average error over the 10 experiments, and the shadowed areas represent the interval within one standard deviation.
The run time for each experiment was 6 seconds on average.
In Figure \ref{fig:Eikonal 2-d NN evolution}, we can see, for one of the ten experiments, the numerical approximation provided by the NN at initialisation and after each of the five rounds in Algorithm \ref{alg: training}.

We see in Table \ref{tab: experiment 2-dimensional cube} that the standard deviation of the MSE and the $L^\infty$-error decreases for smaller values of $\alpha$ and $\delta$. However, if we look at the ratio between the standard deviation and the average error, we see that it increases throughout the training rounds. Indeed, after the first round this ratio is approximately $0.04$ and $0.03$ for the MSE and $L^\infty$-error respectively, whereas after the fifth round, these ratios are approximately $0.5$ and $0.35$.
This can also be observed in the graphical representation of the numerical experiment in Figure \ref{fig:Eikonal 2-d error evol}, which show a larger variance of the error for later rounds in the training algorithm.
We attribute this phenomenon to the fact that for larger $\delta$, each collocation point influences a larger portion of the domain, which leads to an enhanced performance of the training algorithm in terms of data efficiency (see Section \ref{subsec: data efficiency experiments} for a more detailed analysis). 
The flexibility in using different values of $\delta$ to control
the domain of influence for a finite set of collocation points may be another argument supporting the use of a finite-difference-based loss function over the PDE residual, which would correspond to the limit $\delta\to 0^+$.
}

\begin{figure}
    \centering
    \includegraphics[scale=.5]{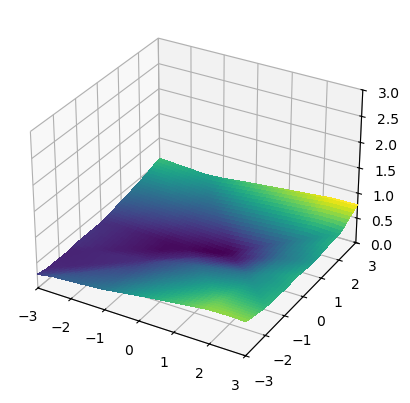}
    \includegraphics[scale=.5]{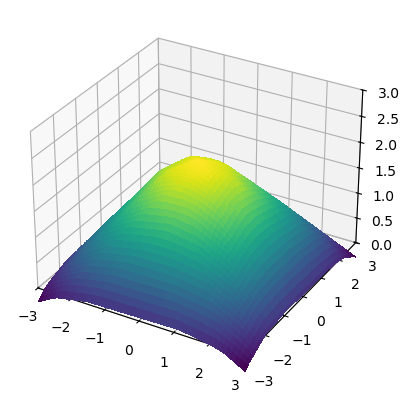}
    \includegraphics[scale=.5]{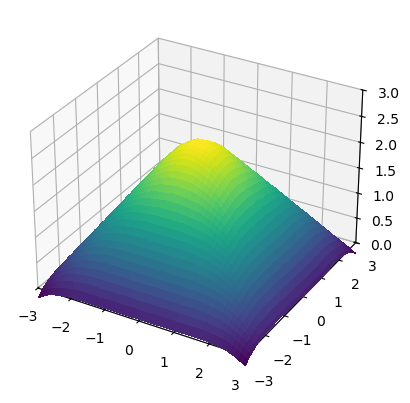}
    \includegraphics[scale=.5]{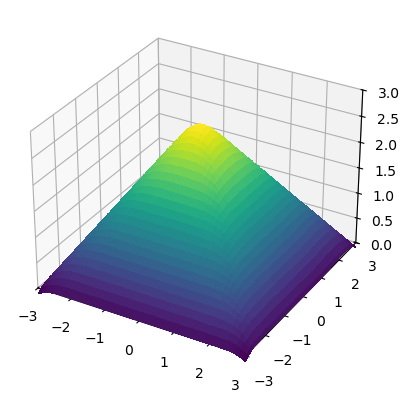}
    \includegraphics[scale=.5]{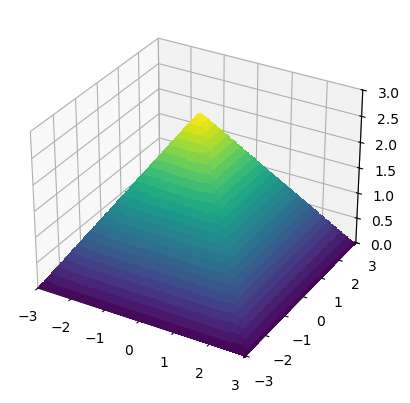}
    \includegraphics[scale=.5]{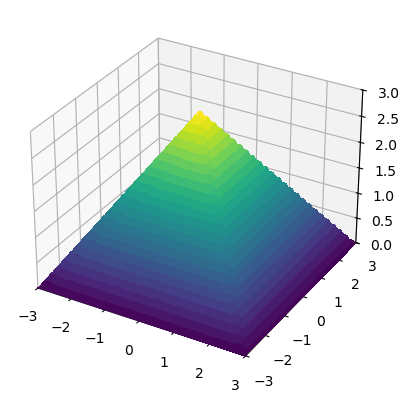}
    \caption{The numerical approximation of the Eikonal equation in $\Omega = (-3,3)^2$ constructed by Algorithm \ref{alg: SGD}. The top-left plot corresponds to the NN at initialisation. Then, from left to right and from top to bottom, each plot corresponds to the approximation after every iteration. The values of $\alpha$ and $\delta$ are taken as in \eqref{params alpha delta numerics}.}
    \label{fig:Eikonal 2-d NN evolution}
\end{figure}

\begin{figure}
    \begin{center}
        \includegraphics[scale=.5]{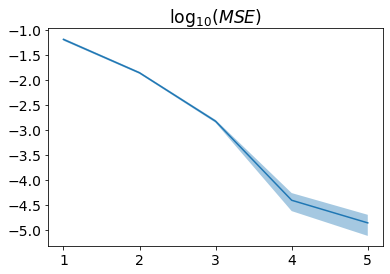}
        \includegraphics[scale=.5]{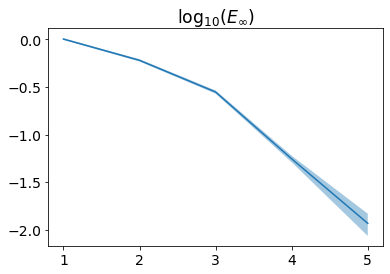}
    \end{center}
    \caption{\textbf{(Eikonal equation in a square in $\mathbb{R}^2$)} Evolution of the Mean Square Error and the $L^\infty$-Error after each round of Algorithm \ref{alg: training}, for the Eikonal equation in $(-3,3)^2$. The parameters $\alpha$ and $\delta$ in each of the five training rounds are taken as in \eqref{params alpha delta numerics}. The experiment is repeated 10 independent times. The solid line represents the average error, and the shadowed area represents the interval within one standard deviation.}
    \label{fig:Eikonal 2-d error evol}
\end{figure}

\begin{table}
    \caption{\textbf{(Eikonal equation in a square in $\mathbb{R}^2$)} Summary of the numerical experiment for the Eikonal equation in $\Omega = (-3,3)^2$. In each experiment, we applied 4 iterations of Algorithm \ref{alg: training} with decreasing values of $\alpha$ and $\delta$ as in \eqref{params alpha delta numerics}. The experiment is repeated 10 times using different random seeds. The error values in the table correspond to the mean and the standard deviation after each training round. See Figure \ref{fig:Eikonal 2-d error evol} for a graphical representation of the numerical results.}
    \label{tab: experiment 2-dimensional cube}
    \medskip\centering
   \begin{tabular}{lrrrrr}
\toprule
Round & 1 & 2 & 3 & 4 & 5 \\
\midrule
$\delta$ & 0.75 & 0.5 & 0.3 & 0.1 & 0.05 \\
$\alpha$ & 2.5 & 2 & 1.5 & 1 & 0.5 \\
\midrule
MSE (avg) & 6.45e-02 & 1.42e-02 & 1.55e-03 & 4.11e-05 & 1.45e-05 \\
MSE (std) & 2.68e-03 & 8.88e-04 & 1.21e-04 & 2.09e-05 & 7.02e-06 \\
\midrule
$L^\infty$-error (avg) & 1.01 & 6.12e-01 & 2.87e-01 & 6.07e-02 & 1.30e-02 \\
$L^\infty$-error (std) & 2.97e-02 & 1.96e-02 & 1.19e-02 & 6.93e-03 & 4.49e-03 \\
\bottomrule
\end{tabular}
\end{table}

\paragraph{Higher dimensional cases.}
Next, we solve \eqref{Eikonal numerical experiments}
in $(-3,3)^d, $  for $d=5$ and $d=8$.
In the experiments, we ran  Algorithm \ref{alg: training}  with $M=4$ and parameters $\alpha$ and $\delta$ following the schedule 
\begin{equation}
    \label{params alpha delta numerics highD}
    (\alpha_0, \alpha_1, \alpha_2, \alpha_3) =  ( 2.5, 2, 1, 0.5 )
    \quad  \text{and} \quad
    (\delta_0, \delta_1, \delta_2, \delta_3) = ( 0.75, 0.3, 0.1, 0.05).
\end{equation}
As we increase the dimension, we need to increase the complexity of the NN and the number of collocation points. 
Rather than increasing the batch sizes $N_0$ and $N_{b_1}$ in \eqref{functional numerics}, we increase the number of iterations per round.
We compared the performance of 
simple 
%different NN architectures. In particular, we used $2$- and $3$-layer 
fully connected MLP with 2 or 3 hidden layers. 

See Table \ref{tab: experiments d-dimensional cube} for a summary of the numerical experiments (each row corresponds to a different experiment).
{Each experiment is repeated 10 times with different independent random seeds. The error values reported in the table represent the average error and the standard deviation over the 10 independent runs of each experiment.
In all the cases, we used a uniform distribution to sample the points in $\Omega$ and $\partial\Omega$ during the iterations of Algorithm \ref{alg: SGD}.
The weighting parameter for the boundary loss is taken, again, as $\gamma_1 = 1$.}
In Figure \ref{fig:error-evol cube}, we see the evolution of the MSE and the $L^\infty$-error throughout the application of Algorithm \ref{alg: training}, for experiments 4,5 and 6, which correspond to the Eikonal equation in an $8$-dimensional cube.
{The solid lines represent the average error after each training round, and the shadowed area represents the interval within one standard deviation.}

As mentioned in section \ref{sec:algorithms}, the reason behind decreasing the values of $\alpha$ and $\delta$ throughout the iterations of Algorithm \ref{alg: training}, is to improve the accuracy of the NN.
However, this does not always produce a more accurate approximation. See in Figure \ref{fig:error-evol cube} that the { variance of the approximation error increases over the iterations of Algorithm \ref{alg: training}. 
We think that this is due to $\delta$ being too small relative to the number of collocation points and the network's expressive power.
See a more detailed discussion in subsection \ref{subsec: data efficiency experiments}.
This issue can be alleviated by increasing the number of collocation points. Since the collocation points are resampled at every SGD iteration, one can increase the total number of collocation points by increasing the number of SGD iterations.}

%Another way to understand this is by noting that, for smaller $\delta$, any grid with discretization step $\delta$ contains more points, and hence, any functional associated with such discretization requires a larger number of collocation points to approximate the minimiser successfully.

%We also observe that increasing the complexity of the NN also enhances the accuracy of the solution. In experiments 1 and 4 in table \ref{tab: experiments d-dimensional cube}, we used a fully connected MLP with 2 hidden layers, whereas, in experiments 2,3,5 and 6, we used an MLP with 3 hidden layers. In the evolution of the error shown in Figure \ref{fig:error-evol cube}, we observe that the improvement in accuracy given by the more complex NN is more noticeable for smaller values of $\alpha$ and $\delta$. We recall that in this case, the minimiser of the functional is closer to the non-smooth viscosity solution. The same phenomenon can be observed in the experiments shown in Table \ref{tab: experiments d-dimensional annulus} and Figure \ref{fig:error-evol annulus} for the Eikonal equation in a $d$-dimensional annulus.

\begin{table}
    \caption{\textbf{(Eikonal equation in a hypercube in $\mathbb{R}^d$)} Summary of the numerical experiments for the Eikonal equation in $\Omega = (-3,3)^d$ with $d=5,8$. In each experiment, we applied 4 iterations of Algorithm \ref{alg: training} with decreasing values of $\alpha$ and $\delta$ as in \eqref{params alpha delta numerics highD}. Each experiment is repeated 10 times using different random seeds. The error values in the table correspond to the mean and the standard deviation. See Figure \ref{fig:error-evol cube} for the evolution of the MSE and the $L^\infty$-error after each iteration. In all the experiments, the batch size in the implementation of SGD in \eqref{functional numerics} is taken as $N_0 = 200$ and $N_{b_1} = 80$.}
    \label{tab: experiments d-dimensional cube}
    \medskip\centering
    \begin{tabular}{lrllllr}
\toprule
 Exp. & Dimension & Architecture & Iterations ($\times 10^3$) & MSE & $L^\infty$-error & Runtime (s) \\
\midrule
1 & 5 & [30, 30] & $\{ 2, 2, 2, 2\}$ & 0.013 ± 0.002 & 0.606 ± 0.079 & 62 \\
2 & 5 & [30, 30, 30] & $\{2, 2, 2, 2\}$ & 0.01 ± 0.007 & 0.527 ± 0.119 & 85 \\
3 & 5 & [30, 30, 30] & $\{2, 2.5, 3, 5\}$ & 0.006 ± 0.004 & 0.435 ± 0.137 & 133 \\
\midrule
4 & 8 & [40, 40] & $\{3, 3, 3, 3\}$ & 0.047 ± 0.006 & 1.129 ± 0.18 & 154 \\
5 & 8 & [40, 40, 40] & $\{3, 3, 3, 3\}$ & 0.031 ± 0.011 & 0.87 ± 0.131 & 209 \\
6 & 8 & [40, 40, 40] & $\{3, 4, 5, 7\}$ & 0.012 ± 0.006 & 0.751 ± 0.245 & 331 \\
\bottomrule
\end{tabular}
\end{table}

\begin{figure}
    \centering
    \includegraphics[scale=0.5]{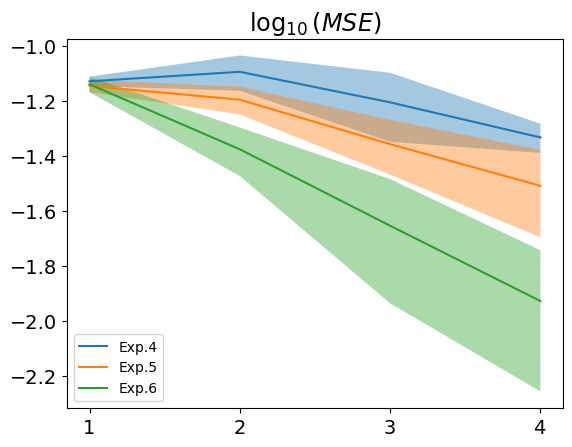}
    \includegraphics[scale=0.5]{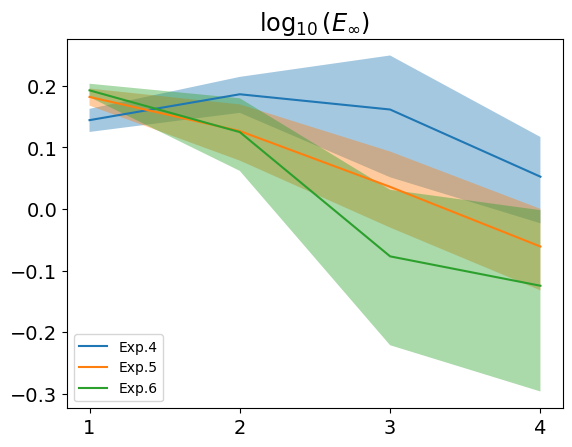}
    \caption{\textbf{(Eikonal equation in a hypercube in $\mathbb{R}^8$)} Evolution of the MSE (left) and the $L^\infty$-error (right) throughout the iterations of Algorithm \ref{alg: training}, for the Experiments 4, 5 and 6 reported in Table \ref{tab: experiments d-dimensional cube}. 
    These correspond to the Eikonal equation in an 8-dimensional cube. Each experiment is repeated 10 times with different random seeds. The solid lines represent the average error over the 10 experiments, and the shadowed area represents the interval within one standard deviation. 
    }
    \label{fig:error-evol cube}
\end{figure}

In Table \ref{tab: experiments d-dimensional annulus}, we can see the results of some numerical experiments for the solution of the Eikonal equation in an annulus of dimension $d=5$ and $10$, with inner radius $2$ and outer radius $6$, i.e.
$$
\Omega := \{
x\in \R^d \ : \quad 2 < |x| < 6
\}.
$$
We applied $4$ rounds of Algorithm \ref{alg: training} with decreasing values of $\alpha$ and $\delta$, following the training schedule in \eqref{params alpha delta numerics highD}.
See Figure \ref{fig:error-evol annulus} for the evolution of the MSE and the $L^\infty$-error of the solution after each iteration.
%Comparing the settings for Experiments 5 and 6, the only difference is that Experiment 6 employed more SGD iterations in Algorithm \ref{alg: SGD}.
%We see that the number of SGD iterations should increase as $\delta$ decreases to achieve smaller errors.
%, when we decrease the value of $\delta$, we need to increase the number of iteration of SGD in Algorithm \ref{alg: SGD} in order to obtain a more accurate approximation. 
%This phenomenon might be attributed to the fact that each collocation point has a smaller domain of influence since the discretisation step is smaller.
See Figure \ref{fig:annulus 10d} for a representation of the numerical solution in a central cross-section of the domain.

\begin{table}
    \caption{\textbf{(Eikonal equation in an annulus in $\mathbb{R}^{d}$)} Summary of the numerical experiments for the Eikonal equation in $\Omega = \{ x\in \R^d \ : \ 2 < |x| < 6  \}$ with $d=5,10$, i.e. the $d$-dimensional annulus with inner and outer radii $2$ and $6$ respectively. Each experiment is repeated 10 times with different random seeds. The error values in the table correspond to the mean and the standard deviation. In each experiment, we applied 4 iterations of Algorithm \ref{alg: training} with decreasing values of $\alpha$ and $\delta$ as in \eqref{params alpha delta numerics highD}. See Figure \ref{fig:error-evol annulus} for the evolution of the MSE and the $L^\infty$-error after each iteration. In all the experiments, the batch size in the implementation of SGD in \eqref{functional numerics} is taken as $N_0 = 200$ and $N_{b_1} = 80$.}
    \label{tab: experiments d-dimensional annulus}
    \medskip\centering
    \begin{tabular}{lrllllr}
    \toprule
    Exp. & Dim. & Architecture & Iterations ($\times 10^3$) & MSE & $L^\infty$-error & Runtime (s) \\
    \midrule
    1 & 5 & [40, 40] & $\{1.5, 1.5, 1.5, 1.5\}$ & 0.019 ± 0.007 & 0.611 ± 0.092 & 62 \\
    2 & 5 & [40, 40, 40] & $\{1.5, 1.5, 1.5, 1.5\}$ & 0.009 ± 0.003 & 0.434 ± 0.078 & 90 \\
    3 & 5 & [40, 40, 40] & $\{1.5, 2, 2.5, 3\}$ & 0.012 ± 0.013 & 0.396 ± 0.054 & 135 \\
    \midrule
    4 & 10 & [60, 60] & $\{2.5, 2.5, 2.5, 2.5\}$ & 0.026 ± 0.003 & 0.797 ± 0.043 & 80 \\
    5 & 10 & [60, 60, 60] & $\{2.5, 2.5, 2.5, 2.5\}$ & 0.015 ± 0.004 & 0.608 ± 0.059 & 103 \\
    6 & 10 & [60, 60, 60] & $\{2.5, 3, 3.5, 5\}$ & 0.014 ± 0.002 & 0.552 ± 0.04 & 137 \\
    \bottomrule
    \end{tabular}
\end{table}

\begin{figure}
    \centering
    \includegraphics[scale=0.5]{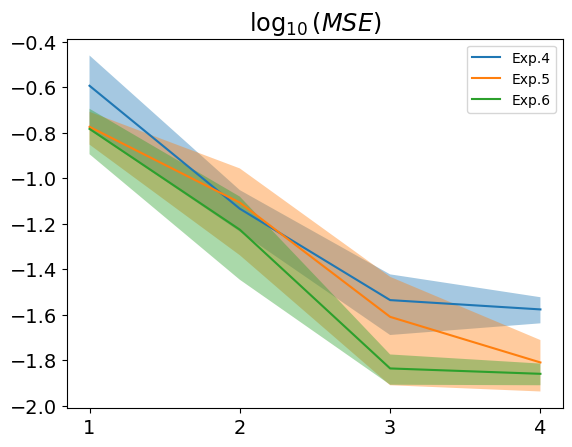}
    \includegraphics[scale=0.5]{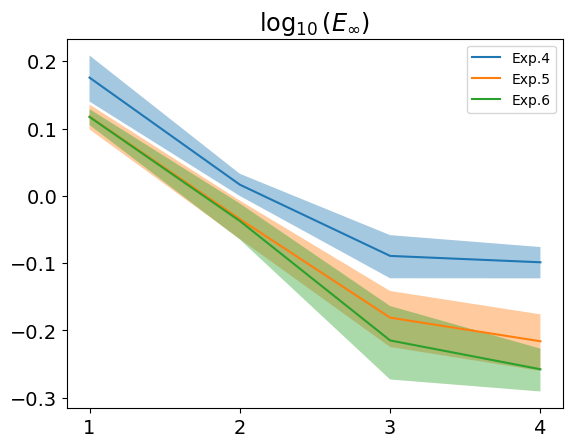}
    \caption{\textbf{(Eikonal equation in an annulus in $\mathbb{R}^{10}$)} Evolution of the MSE (left) and the $L^\infty$-error (right) throughout the iterations of Algorithm \ref{alg: training}, for the experiments 4,5 and 6 reported in Table \ref{tab: experiments d-dimensional annulus}. These correspond to the Eikonal equation in a 10-dimensional annulus. 
    Each experiment is repeated 10 times with different random seeds. The solid lines represent the average error over the 10 experiments, and the shadowed area represents the interval within one standard deviation.
    }
    \label{fig:error-evol annulus}
\end{figure}

\begin{figure}
    \centering
    \includegraphics[scale = .5]{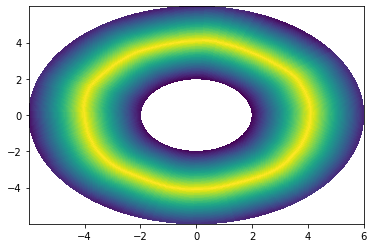}
    \caption{Cross-section of the numerical solution provided by the trained NN in one of the ten independent runs of Experiment 6 in Table \ref{tab: experiments d-dimensional annulus}. The NN approximates the viscosity solution of the Eikonal equation in a 10-dimensional annulus.}
    \label{fig:annulus 10d}
\end{figure}

\subsubsection{Time-dependent Hamilton-Jacobi equation}
\label{subsec: numerics time-dependent}

As mentioned in Section~\ref{subsec: time-dependent pbms}, our method can be applied to time-evolution Hamilton-Jacobi equations of the form \eqref{time-evol HJ}.
Let us consider the initial value problem
\begin{equation}
\label{IVP Riccati}
\begin{cases}
    \vspace{5pt}
    \partial_t u + \dfrac{|\nabla u|^2}{2} + \dfrac{|x|^2}{2} = 0, & (t,x)\in (0,T)\times \R^d, \\
    u_0 (x) = \dfrac{1}{2} \left( \langle x, \, Ax \rangle - 1\right), & x\in \R^d,
\end{cases}
\end{equation}
where $A = \operatorname{diag} (4/25, \, 1, \, 1, \, \ldots)$ and $T=1/2$.
This example is taken from Example 2 in \cite{chow2019algorithm}.

{
By the Riccati Theory, see e.g. \cite{fleming2012deterministic}, the solution to \eqref{IVP Riccati} is given by
\begin{equation}
\label{solution Riccati}
u(t,x) = \dfrac{1}{2} \left( \langle x, \, E(t) x\rangle - 1 \right),
\end{equation}
where $E(\cdot): [0,T)\to \R^{d\times d}$ is the quadratic form, solution to the Riccati differential equation
\begin{equation}
\label{Riccati diff eq}
\begin{cases}
    \dot{E} (t) = - E(t)^2 - I,  & t\in (0,T) \\
    E(0) = A,
\end{cases}
\end{equation}
where $I$ denotes the identity matrix in $\R^d$.
The closed-form solution \eqref{solution Riccati} for the PDE problem \eqref{IVP Riccati} will be used as ground truth to evaluate the accuracy of our method.
}

We consider the bounded domain $\Omega := (-3,3)^d$, and minimize the functional
$$
    \mathcal{J}(u) := \mathcal{R}(u) + \gamma_0 \int_\Omega \left( u(x,0) - u_0(x) \right)^2 dx,
$$
where $\mathcal{R}(u)$ is defined as in \eqref{R(u) time dep}, and $\gamma_0 = 1$.
Note that we do not use supervised data for the entire boundary of $(0, 1/2) \times \Omega$. We only prescribe the initial condition on $\{0\} \times (-3,3)^d$.
However, due to the convexity of Hamiltonian and the initial condition $u_0(\cdot)$, the characteristic curves starting outside $(-3,3)^d$ never enter in $(-3,3)^d$.
This implies that the viscosity solution in $(-3,3)^d\times (0,T)$ is uniquely determined by the initial condition in $(-3,3)^d$, i.e. the initial value problem
\begin{equation}
    \label{IVP experiment}
    \begin{cases}
    \partial_t u + H(x, \nabla u) = 0 & (x,t)\in (-3,3)^d\times (0,1/2) \\
    u(x,0) = u_0(x) & x\in (-3,3)^d
    \end{cases}
\end{equation}
has a unique solution.
Then, by choosing a finite-difference functional $\mathcal{R}(u)$ such that any critical point approximates a viscosity solution, one can deduce that any critical point of $\mathcal{J}(u)$ approximates the unique viscosity solution to the initial value problem \eqref{IVP experiment}.

{
In Table \ref{tab:experiment time-dependent}, we report the results of various numerical experiments for the problem with $d=2$ and $d=5$. In all cases, we consider the training schedule
\begin{align}
(\alpha_0, \alpha_1, \alpha_2, \alpha_3) &= (2.5, 2, 1.5, 1) \nonumber \\
(\delta_{x,0}, \delta_{x,1}, \delta_{x,2}, \delta_{x,3}) &= (0.5, 0.3, 0.2, 0.1) \label{schedule time-dependent} \\
(\delta_{t,0}, \delta_{t,1}, \delta_{t,2}, \delta_{t,3}) &= (0.05, 0.03, 0.02, 0.01). \nonumber
\end{align}
Each experiment reported in Table \ref{tab:experiment time-dependent} has been run 10 times independently, and the errors represent the mean and the standard deviation over the 10 experiments.
Figure \ref{fig: time dep} shows the evolution of the $0$-level set of the approximated solution for the two-dimensional case, after each training round (from left to right). These errors have been computed by taking the solution \eqref{solution Riccati} as the ground truth. The solution in \eqref{solution Riccati} has been computed by solving the Riccati equation \eqref{Riccati diff eq} by means of RK45 method implemented in the Python package Scipy.
}

\begin{table}   
    \caption{\textbf{(Time-evolution Hamilton-Jacobi equation)} Summary of the experiments for the initial value problem problem \eqref{IVP experiment}, with $d=2$ and $d=5$. Each experiment is repeated 10 times with different random seeds. The MSE and the $L^\infty$-error is computed with respect to the solution given by \eqref{solution Riccati}, obtained by numerically solving the Riccati equation \eqref{Riccati diff eq}.}
    \label{tab:experiment time-dependent}\medskip
\centering
    
\begin{tabular}{lrllllr}
\toprule
Exp. & Dim. & Architecture & Iterations ($\times 10^3$) & MSE & $L^\infty$-error & Runtime (s) \\
\midrule
1 & 2 & [50, 50] & \{2, 2, 2, 2\} & 0.008 ± 0.002 & 0.672 ± 0.102 & 60 \\
2 & 2 & [50, 50, 50] & \{2, 2, 2, 2\} & 0.005 ± 0.002 & 0.596 ± 0.088 & 62 \\
3 & 2 & [50, 50, 50] & \{2, 3, 5, 8\} & 0.002 ± 0.001 & 0.463 ± 0.093 & 137 \\
\midrule
4 & 5 & [80, 80] & \{8, 8, 8, 8\} & 0.021 ± 0.006 & 1.579 ± 0.257 & 294 \\
5 & 5 & [80, 80, 80] & \{8, 8, 8, 8\} & 0.012 ± 0.003 & 1.422 ± 0.323 & 402 \\
6 & 5 & [80, 80, 80] & \{8, 10, 15, 20\} & 0.009 ± 0.002 & 1.13 ± 0.261 & 643 \\
\bottomrule
\end{tabular}
\end{table}

\begin{figure}
    \centering
    \includegraphics[scale = .4]{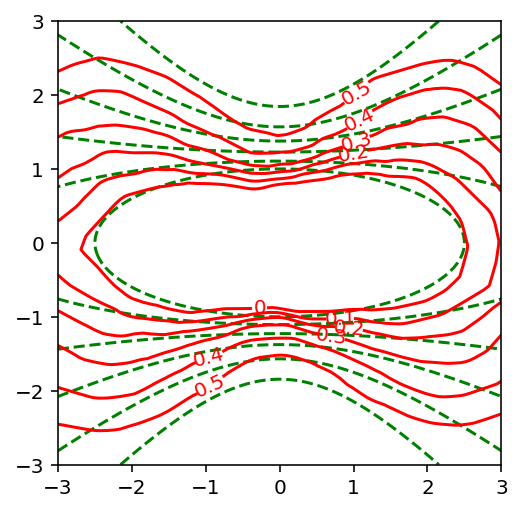}
    \includegraphics[scale = .4]{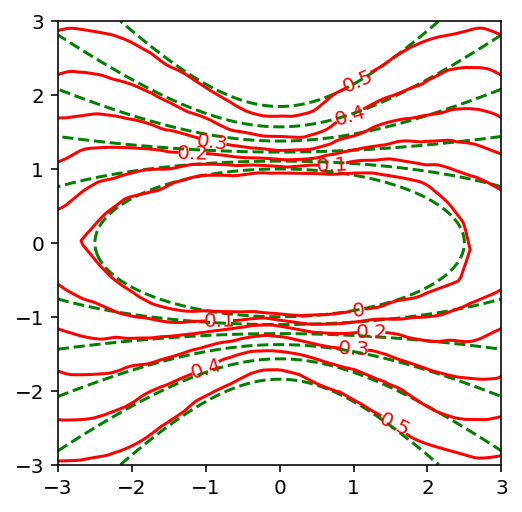}
    \includegraphics[scale = .4]{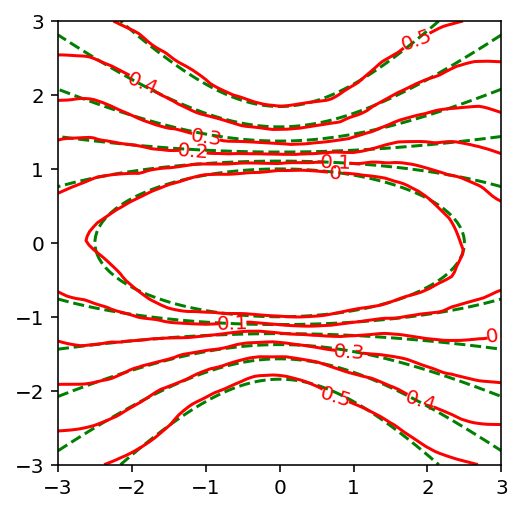}
    \includegraphics[scale = .4]{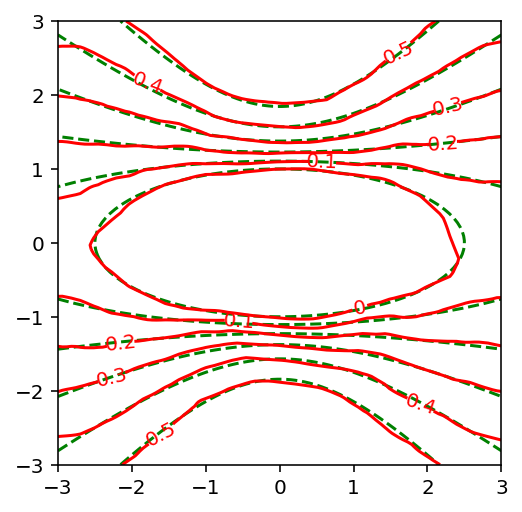}
    \caption{
     \textbf{(Time-evolution Hamilton-Jacobi equation)} For the problem \eqref{IVP Riccati}, with $d=2$, the plots represent the $0$-level set of the numerical solution at times $t\in \{0, 0.1, 0.2, 0.3, 0.5\}$, provided by an NN corresponding to Exp. 3 in Table \ref{tab:experiment time-dependent}. 
    From left to right, the plots represent the NN solution (in red) after training with $\alpha, \delta_x$ and $\delta_t$ as in the schedule \eqref{schedule time-dependent}. The green dashed line represents the solution obtained by numerically solving the Riccati Differential equation \eqref{Riccati diff eq}.
    In the plots, the zero-level sets of the approximate solutions, $\{x\in \R^d \, : \, \Phi(x, t;\theta)=0 \}$, are depicted for $t = 0, 0.1, 0.2, 0.3, 0.4$ and $0.5$.
    The label for the curves reveals the corresponding values of $t$.
    }
    \label{fig: time dep}
\end{figure}

\subsubsection{Shortest path to a target for a Reeds-Shepp's car}
\label{subsec: car OCP}

Here, we consider an optimal control problem for a dynamical system with curvature constraints.
In particular, we consider Reeds and Shepp's car model \cite{reeds1990optimal}, which can move backward and forward with a limited turning radius.
The pose of the car is represented by $(x,y,\omega)\in \R^2 \times \mathbb{T}_{[0,2\pi)}$, where $(x,y)$ represents the spatial position of the centre of mass of the car and $\omega$ represents its orientation angle.
Here, $\mathbb{T}_{[0,2\pi)}$ denotes the one-dimensional flat torus of length $2\pi$.
The optimal control problem involves steering the car to the origin $(x,y) = 0$ from the given initial position $(x_0, y_0, \omega_0)$ in the shortest possible time.

Denoting by $t\mapsto (x(t),y(t), \omega(t))\in \R^2\times \mathbb{T}_{[0,2\pi)}$ the trajectory of the car, the dynamics are given by the following ODE:
\begin{equation}
\label{car dynamics}
\begin{cases}
    \dot{x} (t) = \sigma a(t) \cos \omega (t) \\
    \dot{y} (t) = \sigma a(t) \sin \omega(t) \\
    \dot{\omega} (t)  = \dfrac{b(t)}{\rho} \\
    x(0) = x_0, \ y(0) = y_0,  \ \omega(0) = \omega_0,
\end{cases}
\end{equation}
where $\sigma >0$ is the maximum speed of the car, $\rho>0$ is the inverse of the angular velocity of the car (the turning radius is therefore $\sigma \rho$) and $(x_0,y_0, \omega_0)\in \R^2\times \mathbb{T}_{[0,2\pi)}$ is the initial pose of the car.
The time dependent functions $a(\cdot), b(\cdot): [0,\infty) \to [-1, 1]$ are the controls of the car.

Given the dynamics of the car, the Hamiltonian associated with the shortest path problem is given by
$$
H(x,y,\omega, \nabla u) = \sigma | \partial_x u \cos \omega + \partial_y u \sin \omega | + \dfrac{1}{\rho} |\partial_\omega u| - 1,
$$
where $\nabla u = (\partial_x u, \partial_y u, \partial_\omega u)$ represents the gradient of $u: \R^2\times \mathbb{T}_{[0,2\pi)}\to \R$ with respect to the pose of the car.
The target is the ball of radius $r = 0.2$ centred at the origin.
Since our framework applies to bounded domains, we set $R = 5$, and define the domain
$$
\Omega := \mathbb{A}_{r,R}\times \mathbb{T}_{[0,2\pi)},
$$
where $\mathbb{A}_{r,R}$ denotes the two-dimensional annulus with inner radius $r$ and outer radius $R$
\begin{equation}
\label{annulus def}
\mathbb{A}_{r,R} := \{ (x,y)\in \R^2\, : \ r < \sqrt{x^2 + y^2} < R \}.
\end{equation}
As for the boundary conditions, we set $0$ on the inner boundary of the annulus and $R$ on the outer boundary. Hence, the resulting boundary value problem reads as
\begin{equation}
\label{HJB car}
\begin{cases}
     H (x,y,\omega, \nabla u) = 0 & (x,y,\omega)\in \mathbb{A}_{r,R} \times \mathbb{T}_{[0,2\pi)} \\
     u (x, y, \omega) = 0 & (x,y,\omega) \in \{ \sqrt{x^2 + y^2} = r \} \times \mathbb{T}_{[0,2\pi)} \\
     u (x, y, \omega) = R & (x,y,\omega) \in \{ \sqrt{x^2 + y^2} = R \} \times \mathbb{T}_{[0,2\pi)}.
\end{cases}
\end{equation}

We used our approach to approximate the viscosity solution of the above boundary value problem using a NN.
Since the solution $u(x,y,\omega)$ is $2\pi$-periodic with respect to $\omega$, we considered a parameterised family of functions of the form
$$
\Phi (x,y,\omega; \theta) = \sum_{n=0}^{10} \phi (x,y; \theta_n^{(1)}) \cos (n \omega) +  \sum_{m=1}^{10} \phi (x,y; \theta_m^{(2)}) \sin (m \omega),
$$
where the function $\phi (\cdot, \cdot ; \theta): \R^2 \to \R$ is a three-layer fully connected NN with 60 neurons in each hidden layer.
We trained the NN for three rounds of Algorithm \ref{alg: training}, using the following values for the parameters $\alpha$ and $\delta$:
$$
(\alpha_0, \alpha_1, \alpha_2, \alpha_3) =  (2.5, \, 2.5, \, 2.5)
\quad \text{and} \quad
(\delta_0, \delta_1, \delta_2, \delta_3) = (0.75, 0.6, 0.3).
$$
In each application of Algorithm \ref{alg: SGD}, we run $3000$ iterations of stochastic gradient descent, using $N_{b_1} = 100$ boundary points for the boundary data, and in each of the three rounds, we took $N= [800, 1000, 1200]$ as number of collocation points to approximate the finite-difference loss.
The time it took to train the NN was $90$ seconds on a CPU.

{
Once trained, the NN can be used, as an approximation of the value function $u(x,y,\omega)$, to obtain a feedback control for the car.
It is well-known that the optimal feedback control can be obtained in terms of the value function as
$$
\left( a^\ast (x,y,\omega), b^\ast (x,y,\omega) \right) \in
\arg \min_{(a,b)\in [-1,1]^2} \left( 
\sigma a \partial_x u \cos \omega + \sigma a \partial_y u \sin \omega + \dfrac{1}{\rho} b \partial_\omega u - 1
\right),
$$
which yields
\begin{equation*}
    a^\ast (x,y,\omega) := - \operatorname{sgn} \left( \partial_x u \cos \omega + \partial_y u \sin \omega \right), 
    \quad \text{and} \quad
    b^\ast (x,y,\omega) := -\operatorname{sgn} \left( \partial_w u \right).
\end{equation*}
In practice, we use the NN $\Phi (x,y,\omega; \theta)$ as an approximation of the value function $u (x,y, \omega; \theta)$, and the partial derivatives are replaced by finite-difference approximations as
\begin{align*}
& \partial_x \Phi (x,y,\omega; \theta) = \dfrac{\Phi (x + \delta,y,\omega; \theta) - \Phi (x,y,\omega; \theta)}{\delta},
\quad
\partial_y \Phi (x,y,\omega; \theta) = \dfrac{\Phi (x,y + \delta,\omega; \theta) - \Phi (x,y,\omega; \theta)}{\delta} 
\\
& \partial_\omega \Phi (x,y,\omega; \theta) = \dfrac{\Phi (x,y,\omega + \delta; \theta) - \Phi (x,y,\omega; \theta)}{\delta}.
\end{align*}
where $\delta>0$ is the parameter used in the last step of the training schedule.}
Figure \ref{fig: cars OCP} shows some trajectories obtained by the trained Neural Network.

\begin{figure}
    \centering
    \includegraphics[scale = 0.35]{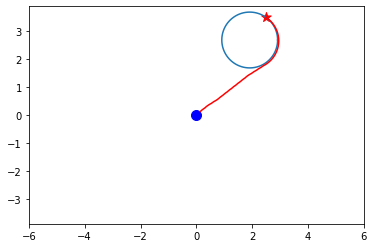}
    \includegraphics[scale = 0.35]{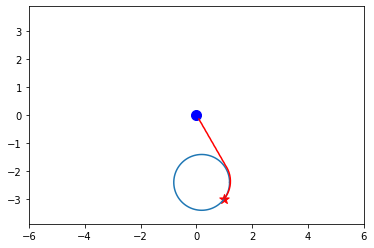}
    \includegraphics[scale = 0.35]{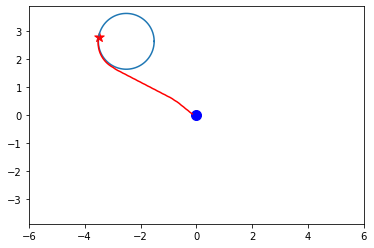}
    \includegraphics[scale = 0.35]{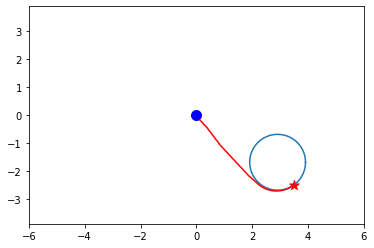}
    \includegraphics[scale = 0.35]{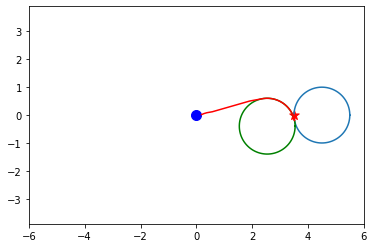}
    \includegraphics[scale = 0.35]{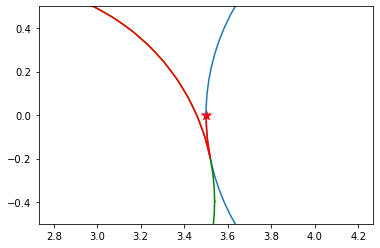}
    \caption{Five sample trajectories for the Reeds-Shepp's car, obtained using Neural Network approximating the viscosity solution of \eqref{HJB car}. The underlying optimal control problem is the shortest path problem to the origin (represented by the blue dot), subject to the curvature-constrained dynamics in  \eqref{car dynamics}. The red star represents the initial position of the car, and the blue and green circles represent the turning circumference from the initial position and orientation of the car. The bottom { right} figure is the fifth trajectory, zoomed-in around the initial position. The car manoeuvres to find a shorter path to the target.}
    \label{fig: cars OCP}
\end{figure}

\subsubsection{Pursuit-evasion game with Reeds-Shepp's cars}
\label{subsec: pursuit evasion game}

As an example of applying our method to a Hamilton-Jacobi equation arising from differential game theory, we consider the pursuit-evasion game for two players that move according to Reeds and Shepp's car model \eqref{car dynamics}.
Given the initial poses for the Evader and the Pursuer, represented by
$$
(x_e, y_e, \omega_e) \in \R^2 \times \mathbb{T}_{[0,2\pi)} \qquad \text{and} \qquad (x_p, y_p, \omega_p) \in \R^2 \times \mathbb{T}_{[0,2\pi)},
$$
we consider the game in which the Pursuer's goal is to intercept the Evader in the shortest possible time, and the Evader's goal is to delay the interception for as long as possible.

Since the position of the game only depends on the relative position between the players, we introduce the variables $X = x_e - x_p$ and $Y = y_e - y_p$. The vector $(X,Y)$ represents the vector joining the Evader with the Pursuer.
The position of the game is therefore represented by $(X,Y, \omega_e, \omega_p) \in \R^2 \times \mathbb{T}_{[1,2\pi)}^2$.
Given the dynamics of the two cars in \eqref{car dynamics}, 
we can write down the equation for the game as
\begin{equation}
\label{car game dynamics}
\begin{cases}
    \dot{X} (t) = \sigma_e a_e(t) \cos \omega_e (t)
    - \sigma_p a_p(t) \cos \omega_p (t)\\
    \dot{Y} (t) = \sigma_e a_e(t) \sin \omega_e(t)
    - \sigma_p a_p(t) \sin \omega_p(t)\\
    \dot{\omega}_e (t)  = \dfrac{b_e(t)}{\rho_e}, \qquad
    \dot{\omega}_p (t)  = \dfrac{b_p(t)}{\rho_p} \\
    X(0) = x_e - x_p, \ Y(0) = y_e - y_p,  \ \omega_e(0) = \omega_e, \ \omega_p (0) = \omega_p,
\end{cases}
\end{equation}
where $\sigma_e$ and $\sigma_p$ represent the maximum velocities for the Evader and the Pursuer, respectively, and $\rho_e$ and $\rho_p$ represent the inverse of their angular velocity (the turning radius for the evader and the pursuer is $\sigma_e \rho_e$ and $\sigma_p \rho_p$ respectively). The functions $a_e (\cdot), a_p(\cdot): [0, \infty) \to [-1,1]$ are the controls for the velocities of the players, and $b_e(\cdot), b_p(\cdot) : [0, \infty) \to [-1,1]$ are the controls for the direction.

In this case, we define the domain of the game as
$$
\Omega := \mathbb{A}_{r,R} \times \mathbb{T}_{[0, 2\pi)}^2,
$$
with $r = 0.2$, $R=4$, where $\mathbb{A}_{r,R}$ is the two-dimensional annulus defined in \eqref{annulus def}, and $\mathbb{T}_{[0, 2\pi)}^2$ is the two-dimensional flat torus with side-length $2\pi$. Recall that the position of the game is represented by the vector joining the players $(X,Y) = (x_e - x_p,\, y_e - y_p)\in \R^2$ and the orientation angle of the players $(\omega_e, \omega_p)\in \mathbb{T}_{[0,2\pi)}^2$.
The target for the Pursuer is the inner boundary of the annulus $\mathbb{A}_{r,R}$, where we set zero boundary condition. The target for the Evader is the outer boundary, where we set the boundary condition equal to $10$.

%In view of the dynamics of the game \eqref{car game dynamics}, and 
Since the Pursuer minimises the output of the game and the Evader maximises it, we can derive the Hamiltonian associated with the game, which reads as
$$
H(X, Y, \omega_e, \omega_p, \nabla u) :=
\sigma_p \left| \partial_X u \cos \omega_p + \partial_Y u \sin \omega_p \right|
+ \dfrac{1}{\rho_p} \left|\partial_{\omega_p} u \right|
- \sigma_e \left| \partial_X u \cos \omega_e + \partial_Y u \sin \omega_e  \right|
-\dfrac{1}{\rho_e} \left|\partial_{\omega_e} u\right|,
$$
where, for a function $u: \Omega \to \R$ defined on the game domain,  $\nabla u = (\partial_X u, \partial_Y u, \partial_{\omega_e} u, \partial_{\omega_p} u)$ denotes the gradient of $u$ with respect to the position of the game.
The Hamilton-Jacobi-Isaacs equation associated with the game reads as follows:
\begin{equation}
\label{HJI car}
\begin{cases}
     H (X, Y, \omega_e, \omega_p, \nabla u) = 0 & (X,Y,\omega_e, \omega_p)\in \mathbb{A}_{r,R} \times \mathbb{T}_{[0,2\pi)}^2 \\
     u (x, y, \omega) = 0 & (x,y,\omega) \in \{ \sqrt{X^2 + Y^2} = r \} \times \mathbb{T}_{[0,2\pi)} \\
     u (x, y, \omega) = R & (x,y,\omega) \in \{ \sqrt{X^2 + Y^2} = R \} \times \mathbb{T}_{[0,2\pi)}.
\end{cases}
\end{equation}
The viscosity solution of this boundary value problem characterises the value of the game and can be used to characterise the optimal feedback strategies for the players as
\begin{equation}
\label{feedback P}
a_p (X,Y,\omega_e, \omega_p) = - \operatorname{sgn} \left(\partial_X u \cos \omega_p + \partial_Y u \sin \omega_p \right),
\qquad
b_p (X,Y,\omega_e, \omega_p) = -\operatorname{sgn} \left( \partial_{\omega_p} u \right)
\end{equation}
for the Pursuer, and 
\begin{equation}
\label{feedback E}
a_e (X,Y,\omega_e, \omega_p) =  \operatorname{sgn} \left(\partial_X u \cos \omega_e + \partial_Y u \sin \omega_e \right),
\qquad
b_e (X,Y,\omega_e, \omega_p) = \operatorname{sgn} \left( \partial_{\omega_e} u \right)
\end{equation}
for the Evader.

For different values of the players' velocities $(\sigma_e, \rho_e)$ and $(\sigma_p, \rho_p)$, we used our approach to approximate the viscosity solution of \eqref{HJI car} using a NN. 
Similarly to the previous example, since the solution $u(X,Y,\omega_e, \omega_p)$ is $2\pi$-periodic with respect to $\omega_e$ and $\omega_p$, we considered a parameterised family of functions of the form
\begin{eqnarray*}
\Phi (X,Y,\omega_e, \omega_p; \theta) &=& \sum_{n=0}^{4} \phi (X,Y; \theta_n^{(1)}) \cos (n \omega_e) +  \sum_{m=1}^{4} \phi (X,Y; \theta_m^{(2)}) \sin (m \omega_e) \\
&& +\sum_{n=0}^{4} \phi (X,Y; \theta_n^{(3)}) \cos (n \omega_p) +  \sum_{m=1}^{4} \phi (X,Y; \theta_m^{(4)}) \sin (m \omega_p)
\end{eqnarray*}
where the function $\phi (\cdot, \cdot ; \theta): \R^2 \to \R$ is a three-layer fully connected NN with 80 neurons in each hidden layer.
We trained the NN for three rounds of Algorithm \ref{alg: training}, using the following values for the parameters $\alpha$ and $\delta$:
$$
(\alpha_0, \alpha_1, \alpha_2) = (2.5, 2, 1.5)
\quad
\text{and}
\quad
(\delta_0, \delta_1, \delta_2) = (0.7, 0.5, 0.3).
$$
In each application of Algorithm \ref{alg: SGD}, we run $3000$ iterations of stochastic gradient descent, using $N_{b_1} = 100$ points for the boundary data, and $N=800$ collocation points to approximate the least-squares finite-difference loss.
The running time to train each NN was 120 seconds on a CPU.

In Figure \ref{fig: game trajectories}, each sample trajectory of the game is computed using the feedback control according to  
\eqref{feedback P} and \eqref{feedback E}, with 
the trained NN approximating the value function $u$. As in the previous example, the partial derivatives are approximated by finite-differences.
In the first example, the Pursuer moves faster than the Evader ($\sigma_p > \sigma_e$), but the Evader has a higher angular velocity ($1/\rho_e > 1/\rho_p$).
The plot at the right represents the evolution of the distance between the players throughout the game. We see an oscillatory behaviour due to the turns that the Evader performs for not being intercepted.
{In some cases we see that, near the maxima of the curve, the distance between the players increases abruptly before it starts decreasing. This is due to the fact that the Pursuer has the possibility to back up in order to change the direction faster (similarly to the last example in Figure \ref{fig: cars OCP})}.
In the second example, we see a trajectory in which both players have the same angular velocity, but the pursuer moves faster.
Hence, the turning radius of the Evader is smaller ($\sigma_e \rho_e < \sigma_p \rho_p$). In this case, the game enters in a loop.
Finally, in the third example, we see a trajectory in which the pursuer moves faster, and both players have the same turning radius ($\sigma_e \rho_e = \sigma_p \rho_p$). In this case, the Pursuer has a clear advantage over the Evader, and interception occurs quickly.
{ See \url{https://github.com/carlosesteveyague/HamiltonJacobi_LeastSquares_LxF_NNs} for other animated representations of trajectories related to this Pursuit-Evasion game.}

\begin{figure}
\begin{tabular}{c c c}
    \begin{minipage}{.1\textwidth}
        \begin{align*}
        \sigma_e = 0.8 \\ 
        \rho_e =  1 \\
        \sigma_p = 1 \\  
        \rho_p = 1.2
        \end{align*}
    \end{minipage}
    & 
    \begin{minipage}{.37\textwidth}
    \includegraphics[width = \linewidth]{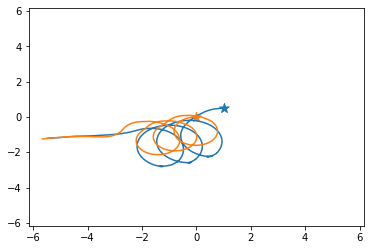}
    \end{minipage} & 
    \begin{minipage}{.37\textwidth}
    \includegraphics[width = \linewidth]{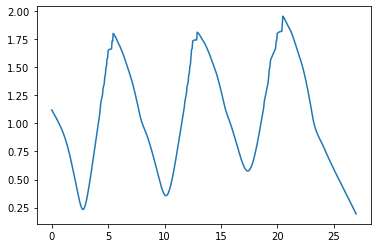}
    \end{minipage}
    \\
    \begin{minipage}{.1\textwidth}
        \begin{align*}
        \sigma_e = 0.8 \\ 
        \rho_e =  1 \\
        \sigma_p = 1 \\  
        \rho_p = 1
        \end{align*}
    \end{minipage}
    & 
    \begin{minipage}{.37\textwidth}
    \includegraphics[width = \linewidth]{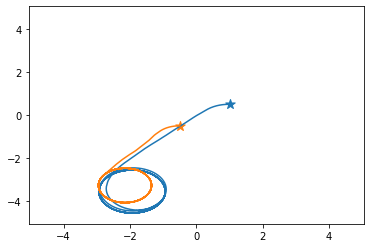}
    \end{minipage} & 
    \begin{minipage}{.37\textwidth}
    \includegraphics[width = \linewidth]{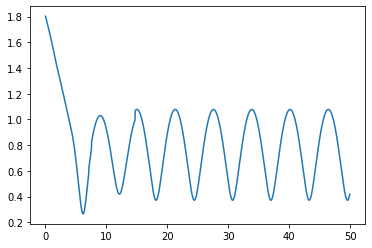}
    \end{minipage}
    \\
    \begin{minipage}{.1\textwidth}
        \begin{align*}
        \sigma_e = 0.8 \\ 
        \rho_e =  1 \\
        \sigma_p = 1 \\  
        \rho_p = 0.8
        \end{align*}
    \end{minipage}
    & 
    \begin{minipage}{.37\textwidth}
    \includegraphics[width = \linewidth]{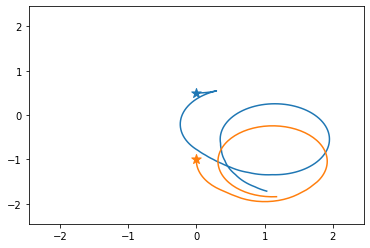}
    \end{minipage} & 
    \begin{minipage}{.37\textwidth}
    \includegraphics[width = \linewidth]{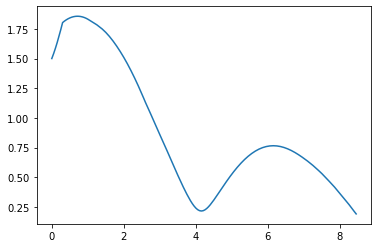}
    \end{minipage}
    \\
    Players' velocities 
    &
    Game trajectory
    &
    Time versus distance between players
\end{tabular}
\caption{Three trajectories of the Pursuit Evasion game with Reeds-Shepp's cars discussed in Section~\ref{subsec: pursuit evasion game}. We consider different velocity parameters for the players (left) and different initial positions shown by the stars. 
}
\label{fig: game trajectories}
\end{figure}

{

\section{Qualitative analysis of the algorithm}
\label{sec: analysis}

In this section, we analyse various aspects of our proposed algorithm using judiciously designed numerical experiments.
% This section is devoted to analysing, through numerical experiments, various aspects of our proposed algorithm.
In particular, we want to show how costly it is to train a NN to approximate the global minimizer of a functional of the form
\begin{equation}
\label{loss function analysis section}
\mathcal{J}(u) :=
\int_\Omega \left[ 
\widehat{H}_\alpha (x, D_\delta^+ u(x), D_\delta^- u(x))\right]^2 d\rho(x)
+
\gamma_1 \int_{\partial\Omega} \left( u(x) - g(x)\right)^2 d\mu(x)
+
\gamma_2 \int_{\Gamma} \left( u(x) - u^\ast (x) \right)^2 d\nu (x),
\end{equation}
where $\widehat{H}_\alpha (x, p^+, p^-)$ is the Lax-Friedrichs numerical Hamiltonian defined in \eqref{H_LxF}, $\Gamma\subset \Omega$ is a subset of the domain where the viscosity solution $u^\ast$ is assumed to be known, and $\rho,\mu$ and $\nu$ are probability distributions on $\Omega$, $\partial\Omega$ and $\Gamma$ respectively. In all the experiments in this section, we consider the weighting parameters $\gamma_1 = \gamma_2 =1$.

%We focus on the following aspects:
\begin{enumerate}
    \item In Section \ref{subsec: data efficiency experiments}, the experiments will show that, to prevent deterioration of generalization errors, 
    the number of collocation points should increase as $\delta$ decreases.
    %when decreasing the parameter $\delta$, an increasing number of collocation points is required to successfully minimise the finite-difference residual (i.e. first term in \eqref{loss function analysis section}). 
    \item Also in Section  \ref{subsec: data efficiency experiments}, we will see that resampling the collocation points at each SGD iteration drastically increases the performance of the trained NNs.
    \item In Section \ref{subsec: data distribution experiments}, we study the choice of the distribution for the collocation points $\rho$ in \eqref{loss function analysis section}. We will see that, especially in high-dimensional scenarios, a suitable choice of $\rho$ may greatly impact the accuracy of the obtained numerical solution.
    \item In Section \ref{subsec: adding supervised data}, we will show that adding supervised data in the interior of $\Omega$ allows for smaller values of $\delta$ and $\alpha$ while keeping the guarantee of convergence to the viscosity solution. This was anticipated in section \ref{subsec: supervised data}.
\end{enumerate}
For simplicity, we will always consider the Eikonal equation in simple domains.

\subsection{Data efficiency and sampling strategy}
\label{subsec: data efficiency experiments}

As argued in Section \ref{sec:why-fdm} and Section \ref{sec:algorithms}, minimising the functional $\mathcal{J}(u)$ with smaller values of $\delta$ may demand more collocation points in the domain $\Omega$ due to a smaller range of influence per point.
While we cannot prove it rigorously, we illustrate this phenomenon through simple numerical experiments. In our tests, we consider three different scenarios in terms of training data (collocation points + boundary points), and compare the generalisation power of the NN after beging trained with decreasing values of $\delta$ and a fixed parameter $\alpha$.

For each value of $\delta$, the NN is trained until the training error is close to zero. When the training data is insufficient, the finite-difference residual
$\left[  \widehat{H}_\alpha (x, D_\delta^+ u(x) , D_\delta^- u(x))\right]^2$
is not necessarily small at points away from the training data. 

% {\color{blue} We first quickly summarize the results from the experiments}
% As shown in Figure~\ref{fig: FD loss heatmap}, we see that the gap between the training loss and the residual in out-of-sample points is larger for smaller values of $\delta$, showing that the lack of data is more substantial in this case.

% We also test the accuracy of the trained NN by comparing it with the ground truth solution. 
% %(which is known for the case that we shall consider). 
% As we shall see {\color{blue} from inspecting Figures 11-14}, the difference between these decreases as we decrease $\delta$. One could think that this contradicts the previous statement about the lack of generalisation for $\delta$ small. However, the increasing accuracy is attributed to the fact that for smaller values of $\delta$, the numerical Hamiltonian has less numerical viscosity, and therefore, the minimiser of $\mathcal{J}(u)$ is closer to the viscosity solution. In conclusion, as we decrease $\delta$, the accuracy of the NN compared with the viscosity solution increases despite not minimising the finite-difference residual globally in $\Omega$.

We consider the Eikonal equation \eqref{Eikonal numerical experiments} in a $d$-dimensional ball of radius $3$, i.e. $\Omega = B(0, 3)\in \R^d$, first with $d=2$ and then with $d=5$.
The viscosity solution to this boundary value problem is given by
\begin{equation}
\label{ground truth Eikonal disc}
u(x) = 3 - \| x\| , \qquad
\text{for} \ x \in \Omega,
\end{equation}
which will be used as the ground truth solution to evaluate the accuracy of the numerical solution.

To study the data efficiency during the training process, we trained three-layer fully connected MLPs (with $20$ neurons in each hidden layer for $d=2$, and with $30$ neurons for $d=5$) by applying Algorithm~\ref{alg: training} to minimise the loss functional $\mathcal{J} (\cdot)$ in \eqref{loss function analysis section}, with $\Gamma = \emptyset$ (i.e. we consider supervised data only on the boundary).
The loss functional $\mathcal{J} (\cdot)$ is approximated, using Monte Carlo approximation, as 
$$
\mathcal{J} \left( \Phi (\cdot; \theta) \right) \approx \tilde{\mathcal{J}} \left( \Phi (\cdot; \theta) \right) := \dfrac{1}{N_0} \sum_{x\in X} \left[\widehat{H}_\alpha ( D_\delta^+ \Phi(x;\theta), D_\delta^- \Phi(x;\theta) )\right]^2 + \dfrac{1}{N_b} \sum_{x\in X_b} \left(\Phi (x ; \theta) \right)^2
$$
where the collocation points $X = \{   x_j\}_{j = 1}^{N_0}$ are uniformly sampled from $\Omega$ and the boundary points $X_{b} = \{ x_j \}_{j=1}^{N_b}$ are uniformly sampled from $\partial \Omega$.
As for the training schedule, we considered $5$ training steps ($M=4$ in Algorithm \ref{alg: training}) in which the parameters $\alpha$ and $\delta$ in the numerical Hamiltonian are set as follows:
\begin{equation}
\label{training schedule data efficiency}
\alpha_i = 2 \quad \forall i = 0, \ldots, 4
\quad \text{and} \quad
(\delta_0 , \delta_1, \delta_2, \delta_3, \delta_4) = ( 0.7, 0.5, 0.2, 0.1, 0.05).
\end{equation}

In the first two scenarios, the collocation points $X$ and $X_b$ are sampled once at the beginning (we shall refer to them as training data) and then re-used throughout the training of the NN.
In particular, for $d=2$, we considered $N_0 = 80$ and $N_b = 40$ in the first case, and $N_0 = 160$ and $N_b = 80$ in the second case.
For $d=5$, we considered $N_0 = 200$ and $N_b = 40$ in the first case, and $N_0 = 400$ and $N_b = 80$ in the second case.
In both cases, we trained the NN for $500$ epochs using SGD with a batch size of $20$ for $d=2$ and $50$ for $d=5$. The training data is randomly shuffled and partitioned into batches in each epoch. Then, a gradient step is performed using the data in each batch. One epoch refers to when all the training data has been used once.

In the third case, the training data $X$ and $X_b$ are resampled at every SGD iteration, as described in  Algorithm \ref{alg: SGD}. In particular, for $d=2$, we carried $J = 2500$ SGD iterations\footnote{Note that $500$ epochs of SGD with a dataset of size $120$ splitted in batches of $20$, as in the first case, corresponds to $3000$ iterations.} with $N_0 = 20$ and $N_b = 8$.
For $d=5$, we carried $J = 2500$ SGD iterations with $N_0 = 300$ and $N_b = 50$.
To put the three cases into the same setting, the latter case can be seen as a single training epoch with a dataset of size $(20+8)\times 2500 = 70000$ for $d=2$, and $(300+50)\times 2500 = 875000$ for $d=5$.

We test the trained NNs on a uniform grid in $\Omega$ with $\delta=0.01$ for $d=2$ and $\delta=0.3$ for $d=5$. We shall refer to the grid nodes as the 
test data, and denote it by $X_t\subset \Omega$. 
It contains around $N_t\approx 2.8\times 10^5$ points for $d=2$ and around $N_t\approx 5.3\times 10^5$ for $d=5$. 

%For the $2$-dimensional case, 
Figure \ref{fig: FD loss heatmap} shows the heat maps of the finite-difference residual $x\mapsto \left[ \widehat{H}_\alpha (D_\delta^+ u(x), D_\delta^- u(x)) \right]^2$ after training the NN with different values of $\delta$ for the two dimensional case. The top and middle rows of figures correspond to the two first scenarios described above, where the collocation points (represented by red dots) are sampled at the beginning and then re-used throughout the training. The areas not covered by the training data appear brighter since the finite-difference residual is larger. This effect is mitigated for larger values of $\delta$, showing improved data efficiency in this case. In the bottom row, we show the heat maps from using the resampling strategy described above. The training data is not represented in this case due to visualisation purposes. We see that the finite-difference residual is smaller, also for small values of $\delta$.

\begin{figure}
    \centering
    \begin{subfigure}{.185\textwidth}
  \centering
  \includegraphics[width=\linewidth]{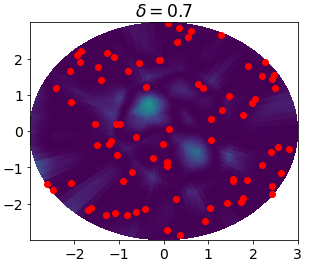}
  \includegraphics[width=\linewidth]{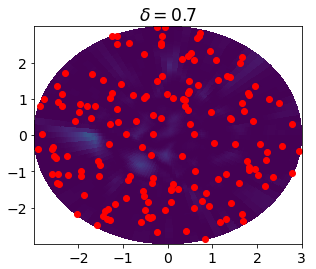}
  \includegraphics[width=\linewidth]{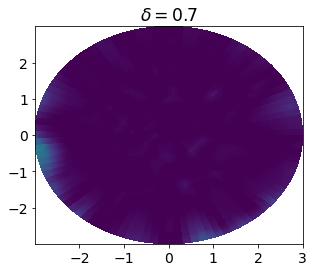}
  \caption{$\delta = 0.7$}
\end{subfigure}
\begin{subfigure}{.185\textwidth}
  \centering
  \includegraphics[width=\linewidth]{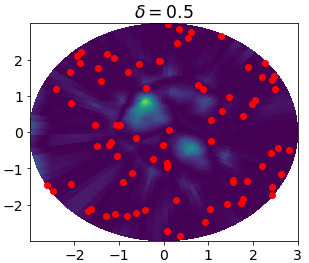}
  \includegraphics[width=\linewidth]{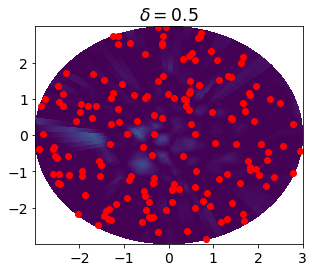}
  \includegraphics[width=\linewidth]{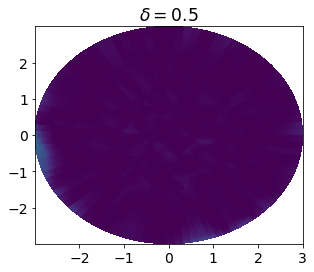}
  \caption{$\delta = 0.5$}
\end{subfigure}
\begin{subfigure}{.185\textwidth}
  \centering
  \includegraphics[width=\linewidth]{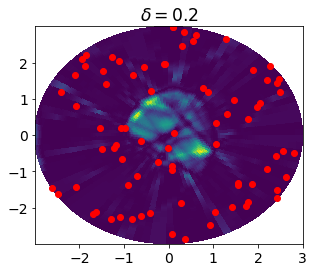}
  \includegraphics[width=\linewidth]{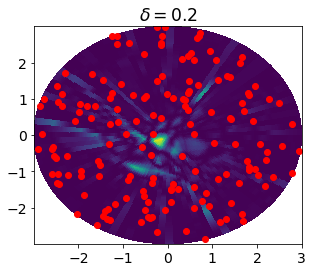}
  \includegraphics[width=\linewidth]{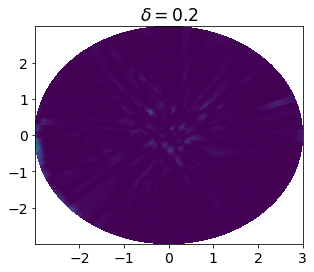}
  \caption{$\delta = 0.2$}
\end{subfigure}
\begin{subfigure}{.185\textwidth}
  \centering
  \includegraphics[width=\linewidth]{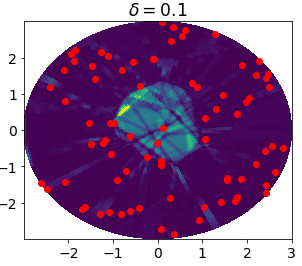}
  \includegraphics[width=\linewidth]{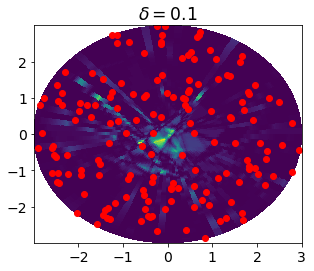}
  \includegraphics[width=\linewidth]{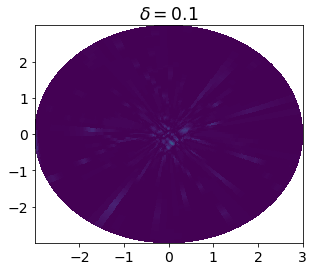}
  \caption{$\delta = 0.1$}
\end{subfigure}
\begin{subfigure}{.22\textwidth}
  \centering
  \includegraphics[width=\linewidth]{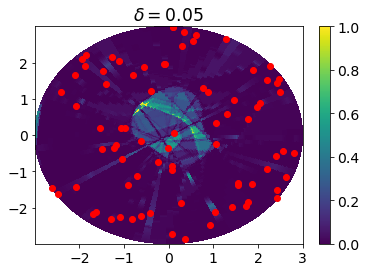}
  \includegraphics[width=\linewidth]{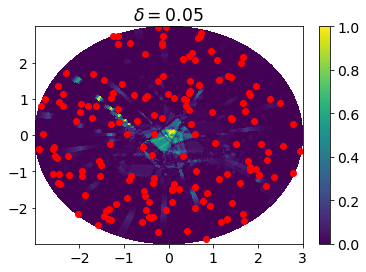}
  \includegraphics[width=\linewidth]{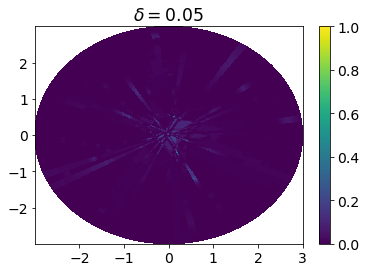}
  \caption{$\delta = 0.05$}
\end{subfigure}
    \caption{Heat map of the finite-difference residual $x\mapsto \left[ \widehat{H}_\alpha (D_\delta^+ u(x), D_\delta^- u(x)) \right]^2$.
    %in the $2$-dimensional case, after training the NN with different values of $\delta$. 
    We show one example for each training scenario described in section \ref{subsec: data efficiency experiments}: $N_0 = 80$ collocation points (top), $N_0=160$ collocation points (middle) and using the resampling strategy (bottom). The collocation points for the first two cases are represented by red dots. The training data for the resampling strategy is not represented as it covers most of the domain.}
    \label{fig: FD loss heatmap}
\end{figure}

Figures \ref{fig: FD loss comparison} and \ref{fig: data efficiency error comparison} show a summary of ten independent experiments using each of the three training data scenarios described above, for the case $d=2$. Figure \ref{fig: FD loss comparison} shows the evolution of the finite-difference residual, evaluated on the test data.
The left plot represents the average over the test data, and the right plot shows the maximum value:
\begin{equation}
\label{FD residual mean and Linf}
\widehat{\mathcal{R}} (u) = 
\dfrac{1}{N_t} \sum_{x\in X_t} \left[ \widehat{H} (D_\delta^+ u(x), D_\delta^- (x)) \right]^2
\qquad \text{and} \qquad
\widehat{\mathcal{R}}_\infty (u) = 
\max_{x\in X_t} \left[ \widehat{H} (D_\delta^+ u(x), D_\delta^- (x)) \right]^2.
\end{equation}
In Figure \ref{fig: data efficiency error comparison} we see the evolution of the mean square error \eqref{MSE def} and the $L^\infty$-error \eqref{Linf def}, with respect to the ground truth solution \eqref{ground truth Eikonal disc}, as the parameter $\delta$ is decreased following the training schedule in \eqref{training schedule data efficiency}. The solid line shows the averaged value over the 10 experiments, and the shadowed area shows the interval within the standard deviation.
Figures \ref{fig: FD loss comparison 5D} and \ref{fig: data efficiency error comparison 5D} show the same summary of ten independent experiments using each of the three training data scenarios described above for the case $d=5$.

In Figures \ref{fig: data efficiency error comparison} and \ref{fig: data efficiency error comparison 5D}, we observe that the error with respect to the ground truth solution decreases as $\delta$ is reduced.
On the contrary, the finite-difference loss evaluated on test data does not decrease. It actually increases if we look at the maximum over the test data (right plots in Figures \ref{fig: FD loss comparison} and \ref{fig: FD loss comparison 5D}). 
Although these two statements might seem to be in contradiction, we stress that, when $\delta$ is smaller, the improved accuracy in terms of error with respect to the ground truth solution is due to less numerical diffusion in the discretised Hamiltonian.
In conclusion, reducing $\delta$ has two opposite effects on the error with respect to the ground truth solution: on one hand, the reduced numerical diffusion enhances the accuracy, and on the other hand, the smaller range of influence per collocation point makes training less data efficient.
We also observe that the resampling strategy gives better results, and this improvement is more noticeable for smaller values of $\delta$. In Figure \ref{fig: data efficiency error comparison 5D}, we see that when the dimension is higher, the improvement of the resampling strategy is more substantial.
We also note that, when using resampling, the standard deviation over the ten experiments is smaller, indicating that this strategy makes the method more robust.

\begin{figure}
    \centering
    
\includegraphics[width=.4\linewidth]{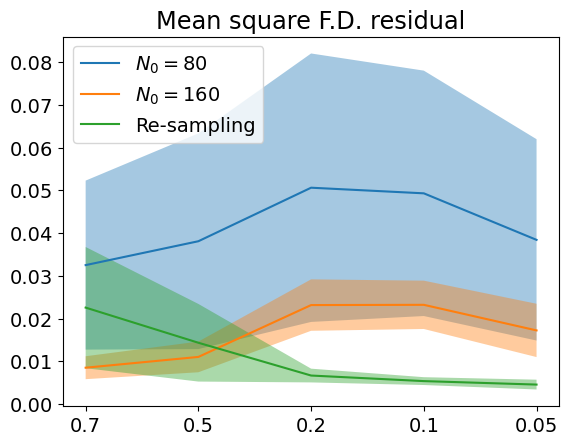}
\includegraphics[width=.4\linewidth]{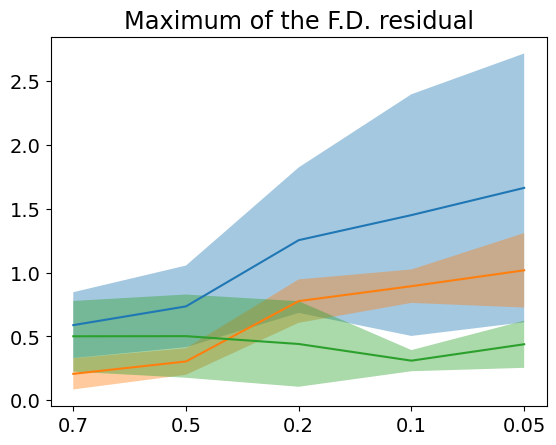}

    \caption{\textbf{(Residual versus $\delta$ for the eikonal equation in a disc)} The plots show the evolution of the finite-difference residual, evaluated on the test data $X_t$, as the parameter $\delta$ is reduced according to \eqref{training schedule data efficiency}, for the problem \eqref{Eikonal numerical experiments} with $\Omega = B(0,3)$ and $d=2$. We show the average over the domain (left) $\widehat{\mathcal{R}} (u)$ and its maximum (right) $\widehat{\mathcal{R}}_\infty (u)$, as defined in \eqref{FD residual mean and Linf}. The solid line represents the mean value computed over ten independent experiments, and the shadowed area represents the interval within one standard deviation.}
    \label{fig: FD loss comparison}
\end{figure}

\begin{figure}
    \centering

\includegraphics[width=.4\linewidth]{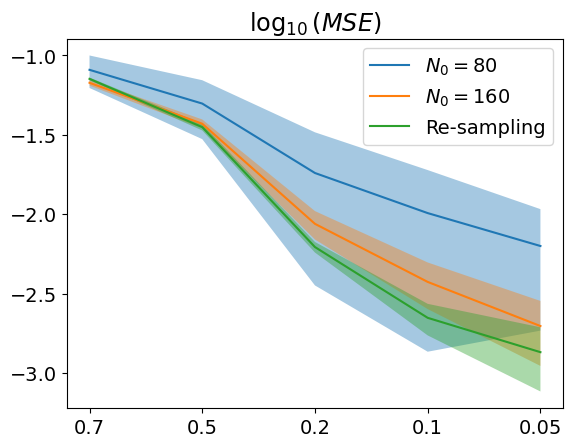}
\includegraphics[width=.4\linewidth]{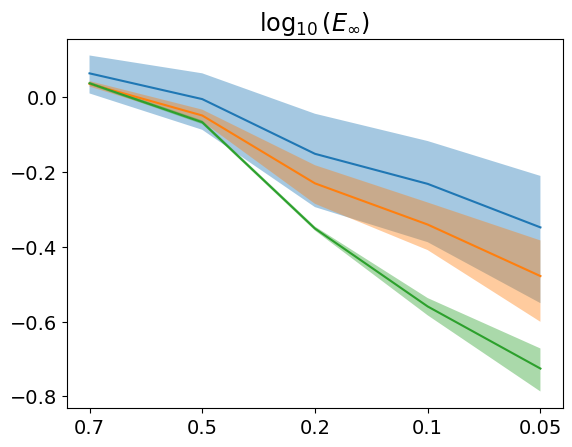}

    \caption{\textbf{(Error versus $\delta$ for the eikonal equation in a disc)} Evolution of the error with respect to the ground truth solution \eqref{ground truth Eikonal disc} with $d=2$, evaluated on the test data $X_t$, as the parameter $\delta$ is reduced according to \eqref{training schedule data efficiency}. We show the mean square error (left) and the $L^\infty$-error (right), as defined in \eqref{MSE def} and \eqref{Linf def} respectively. The solid line represents the mean value computed over ten independent experiments, and the shadowed area represents the interval within one standard deviation.}
    \label{fig: data efficiency error comparison}
\end{figure}

\begin{figure}
    \centering
    
\includegraphics[width=.4\linewidth]{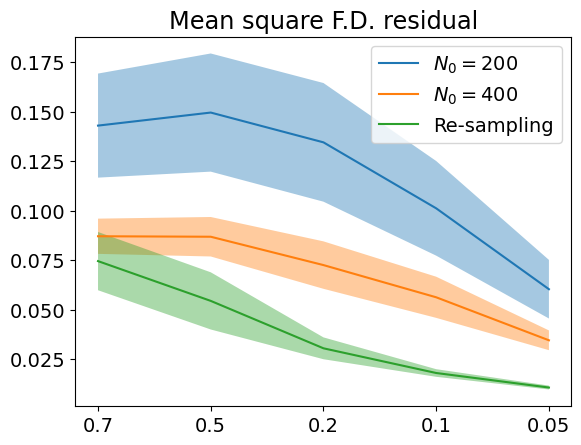}
\includegraphics[width=.4\linewidth]{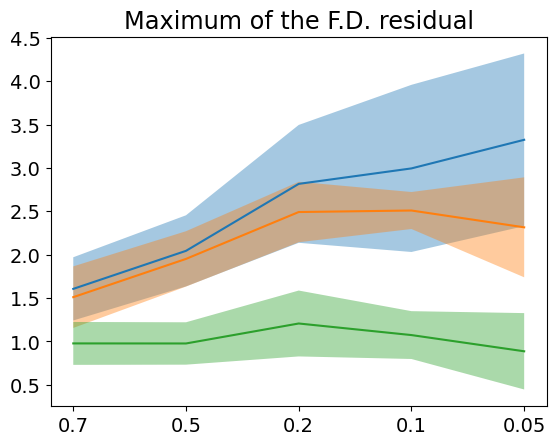}

    \caption{\textbf{(Residual versus $\delta$ for the eikonal equation in a ball in $\R^5$)} Evolution of the finite-difference residual, evaluated on the test data $X_t$, as the parameter $\delta$ is decreased according to \eqref{training schedule data efficiency}, for the problem \eqref{Eikonal numerical experiments} with $\Omega = B(0,3)$ and $d=5$. We show the average over the domain (left) $\widehat{\mathcal{R}} (u)$ and its maximum (right) $\widehat{\mathcal{R}}_\infty (u)$, as defined in \eqref{FD residual mean and Linf}. The solid line represents the mean value computed over ten independent experiments, and the shadowed area represents the interval within one standard deviation.}
    \label{fig: FD loss comparison 5D}
\end{figure}

\begin{figure}
    \centering

\includegraphics[width=.4\linewidth]{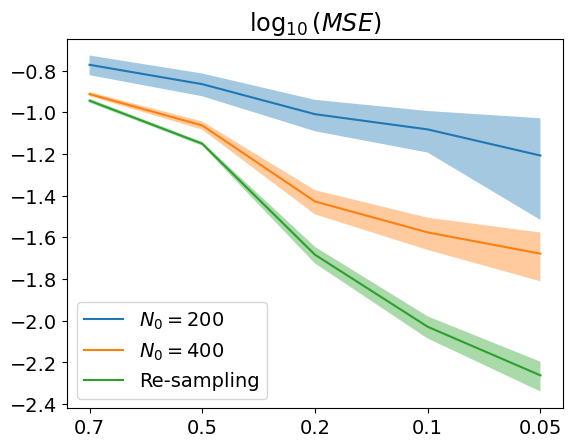}
\includegraphics[width=.4\linewidth]{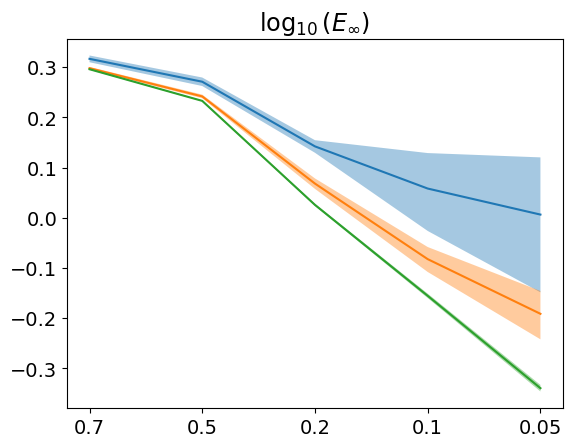}

    \caption{\textbf{(Error versus $\delta$ for the eikonal equation in a ball in $\R^5$)} Evolution of the error with respect to the ground truth solution \eqref{ground truth Eikonal disc} with $d=5$, evaluated on the test data $X_t$, as the parameter $\delta$ is decreased according to \eqref{training schedule data efficiency}. We show the mean square error (left) and the $L^\infty$-error (right), as defined in \eqref{MSE def} and \eqref{Linf def} respectively. The solid line represents the mean value computed over ten independent experiments, and the shadowed area represents the interval within one standard deviation.}
    \label{fig: data efficiency error comparison 5D}
\end{figure}

\subsection{Data distribution in high-dimensional scenarios}
\label{subsec: data distribution experiments}

When the dimension is high, the sampling distribution used during the iterations of Algorithm \ref{alg: SGD} dramatically impacts the numerical solution.
Let the domain $\Omega$ be a $d$-dimensional ball.
Sample points of the uniform distribution over $\Omega$ with respect to the Lebesgue measure, 
%as we did in the previous experiments, 
%it is well-known that the sampled points 
will be closer to the boundary with higher probability. This is simply because the outer rings of the ball occupy a larger volume (of the order $\sim r^{d-1}$).
This ``concentration" causes the accuracy near the centre of the ball to be relatively poor compared to the accuracy of the solution near the boundary.
This issue can be alleviated by using a different distribution for the collocation points.
In the case when $\Omega$ is a $d$-dimensional ball of radius $R$, one possibility is to use a radially uniform sampling, i.e. 
\begin{equation}
\label{radially unif sample}
X = rv,
\quad \text{where} \ r\sim \operatorname{Unif}(0,R) \ \text{and} \
v\sim \operatorname{Unif} (\mathbb{S}^{d-1}),
\end{equation}
where $\mathbb{S}^{d-1}$ denotes the $(d-1)-$dimensional unit sphere.

In Table \ref{tab: experiments d-dimensional ball}, we see the summary of the numerical experiments for the Eikonal equation in a $10$-dimensional ball of radius $6$.
Again, for each experiment we carried out $4$ rounds of Algorithm \ref{alg: training}, with parameters $\alpha$ and $\delta$ following the schedule
\begin{equation}
    \label{params alpha delta numerics D-ball}
    (\alpha_0, \alpha_1, \alpha_2, \alpha_3) =  ( 2.5, 2, 1, 0 )
    \quad  \text{and} \quad
    (\delta_0, \delta_1, \delta_2, \delta_3) = ( 0.7, 0.3, 0.1, 0.01).
\end{equation}
Each experiment is repeated 10 times with different random seeds. The error values shown in the table represent the average and one standard deviation.
The evolution of the MSE and the $L^\infty$-error throughout the iterations of Algorithm \ref{alg: training} is shown in Figure \ref{fig:error-evol ball}.
We observe that the accuracy of the numerical solution in terms of MSE is similar when using a uniform distribution or a radially uniform distribution to sample the collocation points in $\Omega$. See Table \ref{tab: experiments d-dimensional ball} and Figure \ref{fig:error-evol ball} (left).
However, in terms of $L^\infty$-error, the accuracy is remarkably better when using the radially uniform sampling defined in \eqref{radially unif sample}.
See Table \ref{tab: experiments d-dimensional ball} and Figure \ref{fig:error-evol ball} (right).
This is because, when using uniform sampling, most of the points are close to the boundary, and the accuracy near the centre of the ball is poorer. See Figure \ref{fig:Eikonal 10-d sampling} for a representation of the numerical solution using different sampling distributions for the Eikonal equation in the $10$-dimensional ball $\Omega = B_{10} (0,6)$.
In the plot on the left, we used a uniform sampling for the collocation points during the optimisation iterations. In contrast, in the plot at the right, we used the radially uniform sampling described in \eqref{radially unif sample}. 
In both cases, the plots represent the NN evaluated on a uniform grid of the two-dimensional cross-section $B_2(0,6)\times \{ 0 \}^8$.

\begin{table}
    \caption{Summary of the numerical experiments for the Eikonal equation in $\Omega = B_{10} (0, 6)$, i.e. the $10$-dimensional ball of radius 6. In each experiment, we applied 4 iterations of Algorithm \ref{alg: training} with decreasing values of $\alpha$ and $\delta$, following the schedule \eqref{params alpha delta numerics D-ball}. See Figure \ref{fig:error-evol ball} for the evolution of the MSE and the $L^\infty$-error after each iteration. In all the experiments, the batch size in the implementation of SGD in \eqref{functional numerics} is taken as $N_0 = 200$ and $N_{b_1} = 80$.  We see that the use of a radially uniform distribution \eqref{radially unif sample} for the collocation points (experiments 4,5 and 6) improves the accuracy in terms of $L^\infty$-error.}
    \label{tab: experiments d-dimensional ball}
    \medskip\centering
    \begin{tabular}{lrlllllr}
    \toprule
     Exp. & Dim. & Distr. & Architecture & Iterations ($\times 10^3$) & MSE & $L^\infty$-error & Runtime (s) \\
    \midrule
    1 & 10 & Unif. & [40, 40] & $\{1.5, 1.5, 1.5, 1.5\}$ & 0.02 ± 0.003 & 2.355 ± 0.202 & 55 \\
    2 & 10 & Unif. & [40, 40, 40] & $\{1.5, 1.5, 1.5, 1.5\}$ & 0.015 ± 0.004 & 2.773 ± 0.077 & 74 \\
    3 & 10 & Unif. & [40, 40, 40] & $\{1.5, 2, 2.5, 3\}$ & 0.017 ± 0.008 & 2.646 ± 0.148 & 115 \\
    \midrule
    4 & 10 & Rad. & [40, 40] & $\{1.5, 1.5, 1.5, 1.5\}$ & 0.026 ± 0.004 & 0.768 ± 0.053 & 55 \\
    5 & 10 & Rad. & [40, 40, 40] & $\{1.5, 1.5, 1.5, 1.5\}$ & 0.031 ± 0.011 & 0.795 ± 0.064 & 77 \\
    6 & 10 & Rad. & [40, 40, 40] & $\{1.5, 2, 2.5, 3\}$ & 0.017 ± 0.003 & 0.619 ± 0.034 & 121 \\
    \bottomrule
    \end{tabular}
\end{table}

\begin{figure}
    \centering
    \includegraphics[scale=0.5]{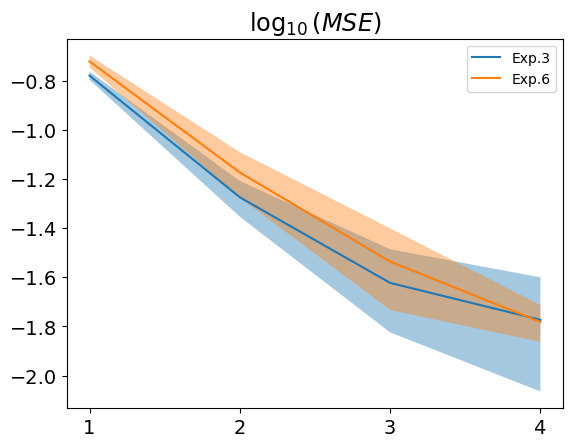}
    \includegraphics[scale=0.5]{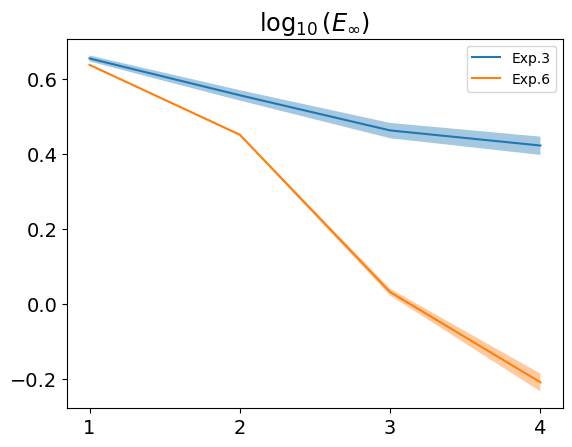}
    \caption{\textbf{(Error versus iteration index for the eikonal equation in a ball in $\R^{10}$)}Evolution of the MSE (left) and the $L^\infty$-error (right) throughout the iterations of Algorithm \ref{alg: training}, for the experiments 3 and 6 reported in Table \ref{tab: experiments d-dimensional ball}. These correspond to the Eikonal equation in a 10-dimensional ball.}
    \label{fig:error-evol ball}
\end{figure}

\begin{figure}
    \centering
    \includegraphics[scale=.5]{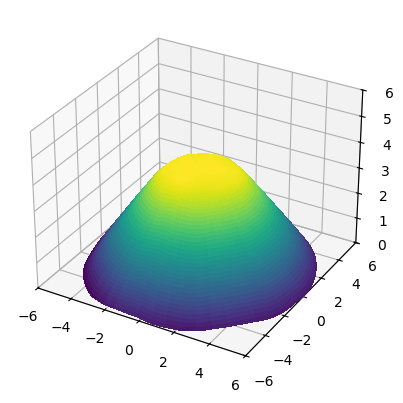}
    \includegraphics[scale=.5]{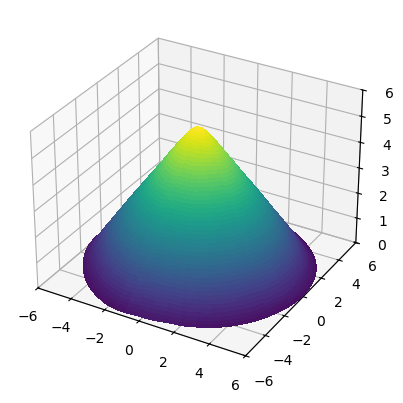}
    \caption{\textbf{Eikonal equation in a ball in $\R^{10}$)} The numerical solution for the Eikonal equation in $\Omega = B_{10}(0,6)\subset \mathbb{R}^{10}$, evaluated on a uniform grid of the two-dimensional cross-section $B_2(0,6)\times \{0\}^8$. In the figure at the left, a uniform distribution over $\Omega$ has been used in Algorithm \ref{alg: SGD}, whereas in the figure at the right, we used the radially uniform sampling defined in \eqref{radially unif sample}.}
    \label{fig:Eikonal 10-d sampling}
\end{figure}

\subsection{Supervised data in domain interior}
\label{subsec: adding supervised data}

Theorem \ref{thm: uniqueness finite-diff} provides a sufficient condition for the convergence of Algorithm \ref{alg: training} to the
solution of the difference equation.
However, when \eqref{eq:equiv_cond_th_1} is not satisfied, we cannot \textit{a priori} guarantee the convergence of the method to the searched solution.

To better understand the behaviour of the proposed method under this uncertain regime, we carry out experiments to estimate the success rate of convergence to the right minimizer for the Eikonal Equation (see Section \ref{sec:Eikonal-experiments} for the description of the problem and settings) on different domains. For these tests, we fix $\alpha = 2$ and consider decreasing values of $\delta$, namely
\[
(\delta_0, \delta_1, \delta_2) = (0.7, 0.2, 0.01).
\]
The value of $\delta$ is kept constant in each experiment.

% Since the method is monotone for $\alpha = 2$, to evaluate the convergence to the right minimizer in the case of the Eikonal Equation is sufficient to look to the sign of the solution. In view of this, we count as correct those solutions presenting the right sign after training with a fixed number of epochs or reverting their sign to the right one after a second session of training. 
The method is monotone for \(\alpha = 2\), so to check convergence to the correct minimizer for the Eikonal equation, we only need to consider the sign of the solution. Therefore, we consider solutions correct if they have the right sign after training for a set number of epochs or if they can switch to the correct sign after an additional training session.
The success rate (for a fixed value of the parameters $\alpha$ and $\delta$) is then obtained as the ratio between the number of correct solutions and the total number of experiments.
Table \ref{tab:conv_cube_ball_with_without_gamma} shows that
the success rate decreases as $\delta$ as we expected.
 
%More specifically, considering the case of simple domains such as the square of the ball, as $\delta$ becomes sufficiently small, we reach a state in which the method equally reaches the correct and the wrong minimizer. However, as the domain increases complexity, such as for the annulus scenario, a sufficiently small value of $\delta$ makes the success rate drop dramatically. 

\begin{table}[h]
    \centering
    \caption{Success rate of Convergence to the right minimizer for different domains and values of $\delta$}
    \medskip
    \begin{tabular}{l|c|c|l|l|l}
    \toprule
        Domain & $\alpha$ & $\delta$ & Success rate ($\Gamma = \emptyset$) & Success rate (localized $\Gamma$) & Success rate (uniform $\Gamma$)\\
        \cmidrule{1-6}
        Square & $2$ & $0.7$  & $1.00$ & $1.00$ & $1.00$\\
               &     & $0.2$  & $0.83 \pm 0.08$  & $1.00 - 0.03$ & $1.00 - 0.03$\\
               &     & $0.01$ & $0.48 \pm 0.09$  & $0.96 \pm 0.03$ & $1.00 - 0.03$\\
        \cmidrule{1-6}
        Ball   & $2$ & $0.7$  & $1.00$ & $1.00$ & $1.00$\\
               &     & $0.2$  & $0.91 \pm 0.05$  & $1.00 - 0.03$ & $1.00 - 0.03$\\
               &     & $0.01$ & $0.53 \pm 0.09$  & $0.96 \pm 0.03$ & $1.00 - 0.03$\\
        \cmidrule{1-6}
        Annulus & $2$ & $0.7$  & $1.00$ & $1.00$ & $1.00$\\
                &     & $0.2$  & $0.43 \pm 0.09$  & $0.96 \pm 0.03$ & $1.00 - 0.03$\\
                &     & $0.01$ & $0.02 \pm 0.02$  & $0.31 \pm 0.09$ & $0.75 \pm 0.08$\\
        \bottomrule
    \end{tabular}
    \label{tab:conv_cube_ball_with_without_gamma}
\end{table}

% To increase the previous rates, as suggested in the last paragraph of Section \ref{subsec: supervised data}, we should have additional information over some internal area so that $\Gamma \ne \emptyset$. Given the theory presented in the previous sections, we can expect the following. 
% Let us restrict the problem to $\Gamma \subset \Omega$. If we suppose additionally that
Now we investigate the effect of having additonal supervised data, $\Gamma$ in the interior of $\Omega$.
We assume that
\begin{equation}\label{cond_suff_large}
\text{dist}(\Gamma, \partial \Omega) > C
\end{equation}
for some $C > 0$ sufficiently large. 
{The additional supervised data change the spectrum of the discrete Laplacian and could potentially convexify the least square fucntional; see \eqref{eq:equiv_cond_th_1}}.
% we can register an increase in the laplacian eigenvalues so that \eqref{eq:equiv_cond_th_1} on this smaller domain provide a sufficient condition for local convergence for smaller values of $\delta$. 

Let us then suppose \eqref{cond_suff_large} to hold for a suitable constant $C$ depending on $\delta$. To verify the beneficial effects of adding supervised data, we run experiments first setting $\Gamma$ to be composed by $10$ randomly selected points sampled in a small internal region of $\Omega$ and then by $10$ randomly chosen points on the all domain $\Omega$.

%On the one hand, as highlighted in the fifth column of Table 
We first run experiments where $\Gamma$ contains $10$ randomly selected points sampled from a small subdomain of $\Omega$.
The success rates are reported in the fifth column of Table~\ref{tab:conv_cube_ball_with_without_gamma}. 
We see that the added knowledge on $\Gamma$ enable a remarkable improvement in the success rate. Nevertheless, it is worth underlining that, as before, the shape of the domain seems to affect the results so that the positive influence of $\Gamma$ is mitigated, for instance, in the annulus case. 

Next, run experiments where $\Gamma$ contains $10$ points sampled unformly from 
the interior of $\Omega$. The success rates are reported in the last column of Table~\ref{tab:conv_cube_ball_with_without_gamma}).  We can appreciate an improvement in the success rate even on the annulus, confirming the dependency of the choice of $\Gamma$ on the shape of the domain $\Omega$. 

{These experiments suggest that supervised data points in the domain interior can improve the overall data efficiency and generalization power of the trained NNs. In practice, one may use other pathwise Hamilton-Jacobi solvers, such as those based on Hopf-Lax formula \cite{darbon2016algorithms} (when applicable), to general supversied data points. }

% To conclude, this analysis emphasized the complexity of the global convergence analysis, even for simple cases. In particular, in order to spread the local information provided by $\Gamma$ to the entire global solution, a major role is played by the mutual shapes of the domain $\Omega$ and the subset $\Gamma$. Further studies are then needed to achieve a more detailed understanding of whether and how the point-wise knowledge on the set $\Gamma$ influences the residual term.
}

\section{Proofs of the main results}
\label{sec: finite-difference proof}

\subsection{Notation and preliminaries}
%In view of Theorems \ref{thm: uniqueness finite-diff} and \ref{thm: main result} and Corollary \ref{cor: uniqueness boundary data}, 
In this section, we only consider the $d$-dimensional cube $\Omega := (0,1)^d$.
For any $N\in \N$, fix $\delta = 1/N$, and consider the uniform Cartesian grids of the domains $\overline{\Omega}$ and $\Omega$, given by 
\begin{align}
\label{grid Omega bar}
&\overline{\Omega}_\delta := \delta\Z^d \cap \overline{\Omega}
= \left\{
x_\beta := \delta \beta \, : \ 
\beta\in \{ 0, 1, \ldots, N \}^d
\right\} \\
&\Omega_\delta := \delta\Z^d \cap \Omega
= \left\{
x_\beta := \delta \beta \, : \ 
\beta\in \{ 1, \ldots, N-1 \}^d
\right\}
\label{grid Omega interior}
\end{align}
From now on, we will use the multi-index notation $\beta := (\beta_1, \ldots, \beta_d) \in \{ 0, 1, \ldots, N \}^d$.
Furthermore, we denote by $\{ e_i\}_{i=1}^d$ the canonical basis of $\R^d$, so that $x_{\beta + e_i} = x_\beta + \delta e_i$ and $x_{\beta - e_i} = x_\beta - \delta e_i$.
We also introduce the index domains
\begin{equation}
\label{idx domain def}
\overline{\mathfrak{B}} := \{ 
0, \ldots , N
\}^d
\quad \text{and} \quad
\mathfrak{B} := \{ 
1, \ldots , N-1
\}^d.
\end{equation}
Then, we can write $\overline{\Omega}_\delta = \{ x_\beta \, : \ \beta \in \overline{\mathfrak{B}} \}$ and $\Omega_\delta = \{ x_\beta \, : \ \beta\in \mathfrak{B} \}$.

For a given a function $u\in C(\overline{\Omega})$, let us denote by $U$ the grid function associated to $u$ on the grid $\overline{\Omega}_\delta$, i.e.
$$
U := \left\{
U_\beta = u (x_\beta) \, : \ 
\beta \in \overline{\mathfrak{B}}
\right\}.
$$

Using the above notation, we can write the functional $\widehat{\mathcal{R}} (u)$ as
\begin{eqnarray}
\nonumber
\widehat{\mathcal{R}} (u) &=&
\delta^d \sum_{x\in \Omega_\delta}
\left[ \widehat{H}_\alpha (x, D_\delta^+ u(x) , D_\delta^- u(x)) \right]^2 \\
\nonumber
&=&
\delta^d \sum_{x\in \Omega_\delta}
\left[ H \left(x, \dfrac{D_\delta^+ u(x) + D_\delta^- u(x)}{2} \right) - \alpha \sum_{i=1}^d \left( \dfrac{D_\delta^+ u(x) - D_\delta^- u(x)}{2}\right)_i \right]^2  \\
\label{F(U) def}
&=& \delta^d \sum_{\beta \in \mathfrak{B}}
\left[ H \left(x_\beta, D_\delta U_\beta \right) - \alpha \sum_{i=1}^d D_{\delta, i}^2 U_\beta \right]^2  
=: F(U),
\end{eqnarray}
where, for any $\beta\in \mathfrak{B}$, $D_\delta U_\beta$ denotes the centred finite-difference approximation of the gradient, i.e.
\begin{equation}
\label{centred diff gradient grid}
D_\delta U_\beta = \left(  
\dfrac{U_{\beta + e_1} - U_{\beta-e_1}}{2\delta}, \ldots ,
\dfrac{U_{\beta + e_d} - U_{\beta-e_d}}{2\delta}
\right) \in \R^d,
\end{equation}
and, for any $i\in \{1, \ldots ,d\}, $ $D_{\delta, i}^2 U_\beta$ is given by
\begin{equation}
\label{finite diff second deriv}
D_{\delta, i}^2 U_\beta = \dfrac{U_{\beta + e_i} + U_{\beta - e_i} - 2 U_\beta}{2\delta}.
\end{equation}
{Note that the notation $D_{\delta, i}^2$ does not correspond to the application of $D_{\delta, i}$ twice.}

In view of the assumption \eqref{cond H}, the functional $U \mapsto F(U)$ defined in \eqref{F(U) def} is differentiable in the space of grid functions on $\overline{\Omega}_\delta$. 
The main step in the proof of Theorem \ref{thm: uniqueness finite-diff} is to verify that if $\nabla F(U) = 0$, then
$$
H \left(x_\beta, D_\delta U_\beta \right) - \alpha \sum_{i=1}^d D_{\delta, i}^2 U_\beta  = 0,
\qquad \forall \beta \in \mathfrak{B}.
$$

In the next lemma, we compute the partial derivatives of $F(U)$ with respect to $U_\beta$, for all $\beta\in \mathfrak{B}$.
\begin{lemma}
    \label{lem: gradient of F(U)}
    Let $H$ satisfy \eqref{cond H}, $\alpha>0$ and, for any $N\in \N$ and $\delta = 1/N$, consider the grid $\overline{\Omega}_\delta$ as in \eqref{grid Omega bar}, and the index domains $\overline{\mathfrak{B}}$ and $\mathfrak{B}$ as in \eqref{idx domain def}. Let $F(U)$ be defined as in \eqref{F(U) def} for any grid function $U$ on $\overline{\Omega}_\delta$.
    Then we have
    $$
    \partial_{U_\beta} F(U) =
    - \delta^{d-1} \left( \sum_{i=1}^d \left(V_{\beta + e_i}^{(i)} W_{\beta+e_i} - V_{\beta - e_i}^{(i)} W_{\beta - e_i} \right)
    + \alpha  \sum_{i=1}^d \left(W_{\beta + e_i} + W_{\beta - e_i} - 2 W_\beta\right) \right) , \qquad \forall \beta\in \mathfrak{B},
    $$
    where
    \begin{equation}
    \label{W def}
    W_\beta := \begin{cases}
        H \left(x_\beta, D_\delta U_\beta \right) - \alpha \sum_{i=1}^d D_{\delta, i}^2 U_\beta,  & \forall \beta \in \mathfrak{B}, \\
        0 & \forall \beta\in \overline{\mathfrak{B}}\setminus \mathfrak{B},
    \end{cases}
    \end{equation}
    and for all $i\in \{1, \ldots, d\}$,
    \begin{equation}
    \label{V i def}
    V_\beta^{(i)} :=
    \begin{cases}
        \partial_{p_i} H \left( x_\beta, D_\delta U_\beta \right), & \forall \beta \in \mathfrak{B} \\
        0 & \forall \beta\in \overline{\mathfrak{B}}\setminus \mathfrak{B}.
    \end{cases}
    \end{equation}
\end{lemma}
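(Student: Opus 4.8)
The plan is a direct chain-rule computation, organized entirely around the stencil structure of the finite-difference operators. First I would rewrite the functional compactly as $F(U) = \delta^d \sum_{\gamma \in \mathfrak{B}} W_\gamma^2$, with the residual $W_\gamma = H(x_\gamma, D_\delta U_\gamma) - \alpha \sum_{i=1}^d D_{\delta,i}^2 U_\gamma$ as in \eqref{W def}. Differentiating the sum of squares gives, for any fixed $\beta \in \mathfrak{B}$, the identity $\partial_{U_\beta} F(U) = 2\delta^d \sum_{\gamma \in \mathfrak{B}} W_\gamma\, \partial_{U_\beta} W_\gamma$, so the entire problem reduces to computing $\partial_{U_\beta} W_\gamma$ and summing over the relevant indices $\gamma$.

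The key observation is that $W_\gamma$ depends on $U_\beta$ only through $D_\delta U_\gamma$ and the second differences $D_{\delta,i}^2 U_\gamma$, each of which involves solely the stencil values $U_\gamma$ and $U_{\gamma \pm e_i}$. Hence $\partial_{U_\beta} W_\gamma$ is nonzero only for the three families $\gamma = \beta$, $\gamma = \beta - e_j$, and $\gamma = \beta + e_j$. I would record the elementary derivatives: $\partial_{U_\beta}(D_\delta U_\gamma)_i = \pm 1/(2\delta)$ exactly when $\beta = \gamma \pm e_i$, and $\partial_{U_\beta} D_{\delta,i}^2 U_\gamma$ equals $1/(2\delta)$ when $\beta = \gamma \pm e_i$ and $-1/\delta$ when $\beta = \gamma$. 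Combining these with the chain rule and the notation $V_\gamma^{(i)} = \partial_{p_i} H(x_\gamma, D_\delta U_\gamma)$ from \eqref{V i def} yields $\partial_{U_\beta} W_\beta = \alpha d/\delta$ (only the $\alpha$-term contributes, since $D_\delta U_\beta$ is independent of $U_\beta$), $\partial_{U_\beta} W_{\beta - e_j} = (V_{\beta-e_j}^{(j)} - \alpha)/(2\delta)$, and $\partial_{U_\beta} W_{\beta + e_j} = -(V_{\beta+e_j}^{(j)} + \alpha)/(2\delta)$.

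Substituting these three expressions into $\partial_{U_\beta} F = 2\delta^d \sum_\gamma W_\gamma\, \partial_{U_\beta} W_\gamma$ and factoring out $\delta^{d-1}$ collects the terms into precisely the asserted formula; in particular the coefficient of $W_\beta$ works out to $2\alpha d$, matching the $\alpha\sum_i(W_{\beta+e_i} + W_{\beta-e_i} - 2W_\beta)$ contribution. The only point requiring genuine care is the treatment of interior nodes adjacent to $\partial\Omega$: for such $\beta$, some neighbours $\beta \pm e_i$ lie in $\overline{\mathfrak{B}} \setminus \mathfrak{B}$ and therefore index no term in the sum defining $F$. This is exactly where the conventions $W_\beta = 0$ and $V_\beta^{(i)} = 0$ for $\beta \in \overline{\mathfrak{B}} \setminus \mathfrak{B}$ are used: they make the missing boundary contributions vanish automatically, so the single formula holds uniformly for every $\beta \in \mathfrak{B}$. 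I expect this boundary bookkeeping, rather than the differentiation itself, to be the only error-prone step.
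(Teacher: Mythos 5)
Your proposal is correct and follows essentially the same route as the paper: a direct chain-rule computation of $\partial_{U_\beta} F = 2\delta^d\sum_\gamma W_\gamma\,\partial_{U_\beta}W_\gamma$, using the same elementary stencil derivatives (in particular that $D_\delta U_\beta$ is independent of $U_\beta$, and the values $\pm 1/(2\delta)$, $1/(2\delta)$, $-1/\delta$ for the neighbouring terms), with the zero conventions for $W$ and $V^{(i)}$ outside $\mathfrak{B}$ absorbing the boundary cases exactly as in the paper. All the intermediate coefficients you record match the paper's, so no gap remains.
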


\begin{proof}
Note that, in view of the definition of $F(U)$ in \eqref{F(U) def} and the definition of $W = \{ W_\beta\}_{\beta\in \overline{\mathfrak{B}}}$ in \eqref{W def}, we have
$$
F(U) = \sum_{\beta \in \overline{\mathfrak{B}}} W_\beta^2.
$$
For any $\beta\in \mathfrak{B}$, the term $U_\beta$ appears only in the terms associated to the index $\beta$ and the indexes $\beta\pm e_i$ for all $i\in \{1, \ldots, d\}$.
    Moreover, we note that, in view of \eqref{centred diff gradient grid}, the term $H(x_\beta, D_\delta U_\beta)$ is independent of $U_\beta$, and for any $i$, the terms 
    $H(x_{\beta \pm e_i}, D_\delta U_{\beta \pm e_i})$ depend on $U_\beta$ only through the $i$-th component of the vector $D_\delta U_{\beta \pm e_i}$.

    Hence, we can write $\partial_{U_\beta} F(U)$ as
    \begin{eqnarray}
    \label{partial F computation}
        \partial_{U_\beta} F(U) &=&
        -2\delta^d \alpha W_\beta \sum_{i=1}^d \partial_{U_\beta} \left( D_{\delta,i}^2 U_\beta \right) \\
        && +
        2\delta^d \sum_{i=1}^d W_{\beta + e_i}
        \Big(
        \underbrace{\partial_{p_i} H \left( x_{\beta+e_i}, D_\delta U_{\beta+e_i} \right)}_{V_{\beta + e_i}^{(i)}} \partial_{U_\beta} \left( D_\delta U_{\beta + e_i}  \right)_i
        - \alpha \partial_{U_\beta} \left(  D_{\delta,i}^2 U_{\beta + e_i}\right)
        \Big) \nonumber \\
        && +
        2\delta^d \sum_{i=1}^d W_{\beta-e_i}
        \Big(
        \underbrace{\partial_{p_i} H \left( x_{\beta - e_i} , D_\delta U_{\beta - e_i}  \right)}_{V_{\beta - e_i}^{(i)}} \partial_{U_\beta} \left( D_\delta U_{\beta - e_i} \right)_i
        - \alpha \partial_{U_\beta} \left( D_{\delta, i}^2 U_{\beta-e_i} \right)
        \Big). \nonumber
    \end{eqnarray}

    Using \eqref{finite diff second deriv}, we can compute
$$
\partial_{U_\beta} \left( D_{\delta,i}^2 U_\beta \right) = \partial_{U_\beta} \left( \dfrac{U_{\beta + e_i} + U_{\beta-e_i} - 2U_\beta}{2\delta} \right) = - \dfrac{1}{\delta}.
$$
Next, for any $i\in \{1, \ldots, d\}$ such that $\beta + e_i\in \mathfrak{B}$ (note that the terms such that $\beta + e_i \not\in \mathfrak{B}$ do not matter since $W_{\beta + e_i}$ would be zero), we can use \eqref{centred diff gradient grid} and \eqref{finite diff second deriv} to compute
\begin{align*}
    & \partial_{U_\beta} \left( D_\delta U_{\beta + e_i} \right)_i = \partial_{U_\beta} \left(\dfrac{U_{\beta + 2 e_i} - U_\beta}{2\delta}\right) = - \dfrac{1}{2\delta}, \\
    & \partial_{U_\beta} \left( D_{\delta, i}^2 U_{\beta + e_i} \right) = \partial_{U_\beta} \left( \dfrac{U_{\beta + 2e_i} + U_{\beta} - 2 U_{\beta + e_i}}{2\delta} \right) = \dfrac{1}{2\delta}.
\end{align*}
Similarly, for any $i\in \{1, \ldots, d\}$ such that $\beta - e_i\in \mathfrak{B}$, we can compute
\begin{align*}
    & \partial_{U_\beta} \left( D_\delta U_{\beta - e_i} \right)_i = \partial_{U_\beta} \left( \dfrac{U_\beta - U_{\beta-2e_i}}{2\delta} \right) = \dfrac{1}{2\delta}, \\
    & \partial_{U_\beta} \left( D_{\delta,i}^2 U_{\beta - e_i} \right) = \partial_{U_\beta} \left( \dfrac{U_\beta + U_{\beta - 2e_i} - 2 U_{\beta-e_i}}{2\delta} \right) = \dfrac{1}{2\delta}.
\end{align*}

By plugging all these values in \eqref{partial F computation}, we obtain
\begin{eqnarray*}
    \partial_{U_\beta} F(U) &=&  \delta^{d-1} \sum_{i=1}^d 2\alpha W_\beta + \delta^{d-1} \sum_{i=1}^d \left(  -V_{\beta+e_1}^{(i)} - \alpha \right) W_{\beta + e_i} + \delta^{d-1} \sum_{i=1}^d \left( V_{\beta - e_i}^{(i)} - \alpha \right) W_{\beta - e_i},
\end{eqnarray*}
and the conclusion follows by regrouping the terms in the above sum.
\end{proof}

\subsection{Proof of Theorem \ref{thm: uniqueness finite-diff} and Corollary \ref{cor: uniqueness boundary data}}
\label{subsec: proof of thm discrete func}

We are now in a position to carry out the proof of Theorem \ref{thm: uniqueness finite-diff},
which consists of two steps: 
\begin{enumerate}
    \item First we prove that if $u\in C(\overline{\Omega})$ is a critical point of $\widehat{\mathcal{R}}(\cdot)$, then the associated grid function $U$ on $\overline{\Omega}_\delta$ satisfies $\nabla F(U) = 0$, where $F(U)$ is given by \eqref{F(U) def}.
    \item Then, we use Lemma \ref{lem: gradient of F(U)} to prove that if $u$ is $L$-Lipschitz, and $\alpha$ and $\delta$ satisfy \eqref{condition theorem}, then $\nabla F(U) = 0$ implies that $\widehat{H}_\alpha (x, D_\delta^+ u(x),  D_\delta^- u(x)) = 0$ for all $x\in \Omega_\delta$.
\end{enumerate}

\begin{proof}
    \textit{\underline{Step 1:}} Let $u\in C(\overline{\Omega})$. If $u$ is a critical point of $\widehat{\mathcal{R}} (\cdot)$, then it must hold that
    \begin{equation}
    \label{grad equal 0 lim}
    \lim_{\varepsilon \to 0} \dfrac{\widehat{\mathcal{R}} (u + \varepsilon \phi) - \widehat{\mathcal{R}} (u)}{\varepsilon} = 0,
    \qquad
    \text{for any function $\phi\in C(\overline{\Omega})$.}
    \end{equation}
   Now, for any grid function $\Phi$ on $\overline{\Omega}_\delta$, let $\phi \in C(\overline{\Omega})$ be a function such that $\phi(x_\beta) = \Phi_\beta$ for all $\beta\in \overline{\mathfrak{B}}$. In other words, consider a continuous function $\phi$ such that the associated grid function is $\Phi$.
   In view of \eqref{grad equal 0 lim}, and using the function $F(\cdot)$ as defined in \eqref{F(U) def}, we deduce that
   $$
   0 = \lim_{\varepsilon \to 0} \dfrac{\widehat{\mathcal{R}} (u + \varepsilon \phi) - \widehat{\mathcal{R}} (u)}{\varepsilon} = 
   \lim_{\varepsilon \to 0} \dfrac{F (U + \varepsilon \Phi) - F (U)}{\varepsilon}
    = \nabla F(U) \cdot \Phi.
   $$
   Since this holds for any grid function $\Phi$, we deduce that $\nabla F(U) = 0$.

   \textit{\underline{Step 2:}} In view of Lemma \ref{lem: gradient of F(U)}, the equation $\nabla F(U) = 0$ can be written as
   $$
   -\delta^{d-1} (A_N (V) + \alpha \Delta_N) W = 0,
   $$
   where $W$ and $V = (V^{(1)}, \ldots , V^{(d)})$ are the grid functions on $\Omega_\delta$ defined in the statement of Lemma \ref{lem: gradient of F(U)},
   and $A_N(V)$ and $\Delta_N$ are linear operators on the space of grid functions on $\Omega_\delta$, defined below, in Lemmas \ref{lem: upper bound A} and \ref{lem: smallest eig Lap} respectively.
   The conclusion of the proof follows after proving that the linear operator $A_N (V) + \alpha \Delta_N$ is invertible, so the optimality condition $\nabla F(U) = 0$ implies that $W= 0$, and hence,
   $$
   \widehat{H}_\alpha (x_\beta, D_\delta^+ u(x_\beta), D_\delta^- u(x_\beta)) = H (x_\beta, D_\delta U_\beta) - \alpha \sum_{i=1}^d D_{\delta, i}^2 U_\beta = W_\beta = 0, \quad \forall \beta \in \mathfrak{B}.
   $$
   Recall that, by the Definition \eqref{grid Omega interior}, $\Omega_\delta = \{ x_\beta \, : \ \beta \in \mathfrak{B} \}$.
   To prove that the operator $A_N (V) + \alpha \Delta_N$ is invertible,
   it is enough to prove that
   $$
   \underline{\sigma} \left( A_N(V) + \alpha \Delta_N \right) =  
   \min_W \dfrac{\| \left( A_N(V) + \alpha \Delta_N \right) W \|_2}{\| W\|_2} >0.
   $$
   Note that $\underline{\sigma} \left( A_N(V) + \alpha \Delta_N \right)$ is a lower bound for the modulus of the eigenvalues of $A_N(V) + \alpha \Delta_N$.
   Using Lemmas \ref{lem: upper bound A} and \ref{lem: smallest eig Lap} below, we have that
   \begin{eqnarray*}
       \underline{\sigma} \left( A_N(V) + \alpha \Delta_N \right) &\geq &
       \min_W \dfrac{ \alpha \| \Delta_N W\|_2 - \|A_N(V) W \|_2}{\| W\|_2} \\
       &\geq & \min_W  \dfrac{ \alpha \| \Delta_N W\|_2 }{\|W\|_2} - \max_W \dfrac{\|A_N(V) W \|_2}{\| W\|_2} \\
       &\geq & 4 \alpha d \sin^2 \left( \dfrac{\pi}{2} \delta \right)  - 2 d C_H(L).
   \end{eqnarray*}
   Hence, we deduce that $\underline{\sigma} (A_N(V) + \alpha\Delta_N)>0$ provided $2 \alpha \sin^2 \left( \frac{\pi}{2}\delta \right)> C_H(L)$, where $C_H(L)$ is defined in the statement of Lemma \ref{lem: upper bound A}.
\end{proof}

Next, we prove the two following lemmas, which are used, at the end of the proof of Theorem \ref{thm: uniqueness finite-diff}, to bound the spectrum of the operators $A_N(V)$ and $\Delta_N$.
Although these results might be known to the expert reader, we include them for completeness.

\begin{lemma}
\label{lem: upper bound A}
    Let $H$ satisfy \eqref{cond H}, consider the domain $\Omega = (0,1)^d$ and, for any $N\in \N$ and $\delta = 1/N$, let $\overline{\Omega}_\delta$ be the grid defined in \eqref{grid Omega bar}.
    For any function $u\in C(\overline{\Omega})$ with Lipschitz constant $L>0$, let $U$ be the grid function associated to $u$ on the grid $\overline{\Omega}_\delta$, and set $V = (V^{(1)}, \ldots, V^{(d)})$, where for each $i\in \{1, \ldots, d\}$, $V^{(i)}$ is the grid function on $\overline{\Omega}_\delta$ defined by \eqref{V i def}.
     
    Then, the linear operator defined, for any grid function $W$ on $\overline{\Omega}_\delta$, by
    $$
    A_N (V) W = \sum_{i=1}^d \left( V_{\beta + e_i}^{(i)} W_{\beta+e_i} - V_{\beta-e_i}^{(i)} W_{\beta-e_i} \right),
    $$
     satisfies
    $$
    \overline{\sigma} \left( A_N (V) \right) :=
    \max_W \dfrac{\| A_N(V) W\|_2}{\| W \|_2} \leq 2d C_H(L),
    $$
    where $C_H(L) = \displaystyle\max_{\substack{\|p\|\leq L \\
     x\in \overline{\Omega}}} \| \nabla_p H (x, p) \|$.
\end{lemma}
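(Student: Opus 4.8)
The plan is to bound the operator norm $\overline{\sigma}\left(A_N(V)\right)$ by writing $A_N(V)$ as a sum of $2d$ elementary operators, each of which factors as a \emph{shift} composed with a \emph{diagonal multiplication}, and then invoking the triangle inequality. Concretely, for each $i\in\{1,\dots,d\}$ I would introduce the operators $B_i^+$ and $B_i^-$ acting on grid functions by $\left(B_i^+ W\right)_\beta := V^{(i)}_{\beta+e_i} W_{\beta+e_i}$ and $\left(B_i^- W\right)_\beta := V^{(i)}_{\beta-e_i} W_{\beta-e_i}$, adopting the convention that $W_\gamma = 0$ and $V^{(i)}_\gamma = 0$ for $\gamma\in\overline{\mathfrak{B}}\setminus\mathfrak{B}$ as in \eqref{V i def}. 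With this notation $A_N(V) = \sum_{i=1}^d \left(B_i^+ - B_i^-\right)$, so that $\overline{\sigma}\left(A_N(V)\right) \le \sum_{i=1}^d \left(\|B_i^+\| + \|B_i^-\|\right)$, and the lemma reduces to the single estimate $\|B_i^\pm\| \le C_H(L)$ for every $i$.

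The first ingredient is a uniform bound on the diagonal entries, namely $|V^{(i)}_\beta| \le C_H(L)$ for all $\beta\in\overline{\mathfrak{B}}$ and all $i$. Indeed, $|V^{(i)}_\beta| = |\partial_{p_i} H(x_\beta, D_\delta U_\beta)| \le \|\nabla_p H(x_\beta, D_\delta U_\beta)\|$, so it suffices to check that the argument $D_\delta U_\beta$ lies in the ball $\{\|p\|\le L\}$ over which the maximum defining $C_H(L)$ is taken. This is where the $L$-Lipschitz hypothesis on $u$ enters: since $x_{\beta+e_i}$ and $x_{\beta-e_i}$ are at distance $2\delta$, each component of the centred difference \eqref{centred diff gradient grid} satisfies $|(D_\delta U_\beta)_i| = \left|u(x_{\beta+e_i})-u(x_{\beta-e_i})\right|/(2\delta) \le L$, and the Lipschitz control of $u$ yields the required a priori bound $\|D_\delta U_\beta\| \le L$, placing $(x_\beta, D_\delta U_\beta)$ inside the admissible set for \eqref{cond H}.

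The second ingredient is that the shift underlying each $B_i^\pm$ is a contraction. Factoring $B_i^+ = P_i^+ T_i^+$, where $\left(T_i^+ W\right)_\beta := W_{\beta+e_i}$ is the shift and $P_i^+$ is multiplication by the diagonal with entries $V^{(i)}_{\beta+e_i}$, one computes $\|T_i^+ W\|_2^2 = \sum_\beta |W_{\beta+e_i}|^2 \le \sum_\gamma |W_\gamma|^2 = \|W\|_2^2$, because $\beta\mapsto\beta+e_i$ is injective (entries are merely reindexed, with the out-of-range terms discarded by the zero convention), so $\|T_i^+\| \le 1$. The diagonal operator obeys $\|P_i^+\| = \max_\beta |V^{(i)}_{\beta+e_i}| \le C_H(L)$ by the previous paragraph, whence $\|B_i^+\| \le \|P_i^+\|\,\|T_i^+\| \le C_H(L)$; the identical argument applies to $B_i^-$ with the shift $\beta\mapsto\beta-e_i$. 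Summing over the $2d$ operators gives $\overline{\sigma}\left(A_N(V)\right) \le 2d\,C_H(L)$, as claimed.

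I expect the only genuinely delicate point to be the diagonal bound $|V^{(i)}_\beta|\le C_H(L)$, i.e.\ relating the Lipschitz constant $L$ to the norm of the centred-difference gradient so that the constant in the estimate is exactly $C_H(L)$; the componentwise bound on $D_\delta U_\beta$ is immediate, but asserting the Euclidean bound $\|D_\delta U_\beta\|\le L$ is the step deserving care. The operator-theoretic core (the shift-times-diagonal decomposition, the contractivity of the shift, and the triangle inequality) is otherwise routine, which matches the paper's remark that such bounds are standard.
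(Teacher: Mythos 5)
Your argument is correct and reaches the same bound, but via a genuinely different (and more elementary) route for the operator-norm part. The paper first factors out the constant, dominating $A_N(V)$ entrywise by $C_H(L)\,\overline{A}$ where $\overline{A}=\sum_i A^{(i)}$ is built from Kronecker products with the tridiagonal matrix $B$ having zero diagonal and unit off-diagonals, and then bounds $\overline{\sigma}(B)\le 2$ by Gershgorin (using symmetry of $B$) to get $\overline{\sigma}(\overline{A})\le 2d$. Your decomposition into the $2d$ operators $B_i^{\pm}=P_i^{\pm}T_i^{\pm}$, with the shift a contraction and the diagonal factor bounded by $\max_\beta|V^{(i)}_\beta|$, replaces the Kronecker/Gershgorin step by the triangle inequality and is arguably cleaner; it also makes transparent where the factor $2d$ comes from (one unit per shift direction and sign). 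What the paper's route buys is that it isolates the structural operator $\overline{A}$, which sits naturally beside the companion Lemma~\ref{lem: smallest eig Lap} on $\Delta_N$, where the same tensor-product bookkeeping is genuinely needed to compute the smallest eigenvalue.

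The one point you rightly flag as delicate, the bound $\|D_\delta U_\beta\|\le L$, is in fact the \emph{same} assertion the paper makes when it writes $V^{(i)}_\beta\le C_H(L)$, and it is not fully innocent: the Lipschitz hypothesis gives the componentwise bound $|(D_\delta U_\beta)_i|\le L$, i.e.\ $\|D_\delta U_\beta\|_\infty\le L$, but an $L$-Lipschitz function (with respect to the Euclidean norm) can have a centred-difference vector of Euclidean norm strictly larger than $L$ (already in $d=2$ one can arrange $\|D_\delta U_\beta\|_2=L\sqrt{4-2\sqrt{2}}>L$), and in general one only gets $\|D_\delta U_\beta\|_2\le \sqrt{d}\,L$. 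So, taken literally, both your proof and the paper's deliver the estimate with $C_H(\sqrt{d}L)$ in place of $C_H(L)$ unless the maximum in \eqref{C_H of L def} is understood over $\|p\|_\infty\le L$ or the Lipschitz constant is taken with respect to the $\ell^1$ norm. This does not affect the structure of either argument (only the value of the constant fed into condition \eqref{condition theorem}), and since the paper glosses over it in exactly the same way, your proof matches the paper's level of rigour while being more explicit about where the issue lies.
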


\begin{proof}
    Using the definition of $V^{(i)}$ in \eqref{V i def} and the fact that $u$ has Lipschitz constant $L$, it holds that
    $$
    V_\beta^{(i)} \leq C_H(L), \quad
    \forall \beta \in \mathfrak{B} \ \text{and} \ i\in \{1,\ldots,d\}.
    $$
    Then, in view of the definition of $A_N(V)$, it follows that
    \begin{equation}
    \label{conclusion proof lem AN}
    \overline{\sigma} (A_N (V)) \leq C_H(L) \overline{\sigma} (\overline{A}),
    \end{equation}
    where $\overline{A}$ is the linear operator, on the space of grid functions on $\Omega_\delta$, given by
    $$
    \overline{A} = \sum_{i=1}^d A^{(i)},
    $$
    with
    \begin{eqnarray*}
    A^{(1)} &=& B \otimes I \otimes \cdots \otimes I \\
    A^{(2)} &=& I \otimes B \otimes \cdots \otimes I \\
    &\vdots & \\
    A^{(d)} &=& I \otimes I \otimes \cdots \otimes B,
\end{eqnarray*}
where $I$ is the $(N-1)\times (N-1)$ identity matrix, and $B\in \R^{(N-1)\times (N-1)}$ is given by
    $$
B =
    \begin{bmatrix}
        0 & 1 &  \\
        1 & 0 & 1  \\
         & 1 & 0 & 1 \\
        & & \ddots & \ddots & \ddots \\
        & &        & 1 & 0 & 1 \\ 
        & & &  & 1 & 0
    \end{bmatrix}.
$$
Finally, by the Gershgorin Theorem, the largest eigenvalue of $B$ is smaller or equal to $2$, so we conclude that
$$
\overline{\sigma} (\overline{A}) \leq \sum_{i=1}^d \overline{\sigma} \left(A^{(i)}\right) \leq 2 d,
$$
and the conclusion of the lemma follows from \eqref{conclusion proof lem AN}.
\end{proof}

\begin{lemma}
\label{lem: smallest eig Lap}
    Let $N\in \N$, $\delta = 1/N$, and consider the grid $\Omega_\delta$ as in \eqref{grid Omega bar} and the index domain $\mathfrak{B}$ as in \eqref{idx domain def}.
    The linear operator $\Delta_N$, defined for any grid function $W$ on $\Omega_\delta$ as
    $$
    \Delta_N W = \sum_{i=1}^d
    \left(W_{\beta + e_i} + W_{\beta-e_i} - 2 W_\beta\right),
    $$
    with $W_{\beta \pm e_i} = 0$ whenever $\beta\pm e_i\not\in \mathfrak{B}$, satisfies 
    $$
    \underline{\sigma} (\Delta_N) := \min_W \dfrac{\| \Delta_N W\|_2}{\| W\|_2} =  4 d \sin^2 \left( \dfrac{\pi}{2}\delta \right).
    $$
\end{lemma}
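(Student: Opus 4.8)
The plan is to diagonalise $\Delta_N$ explicitly through the standard discrete sine basis, exploiting the fact that $\Delta_N$ is precisely the $d$-dimensional discrete Dirichlet Laplacian and therefore carries a tensor-product (Kronecker) structure built from its one-dimensional counterpart.

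First I would dispose of the one-dimensional case. Writing $T$ for the one-dimensional operator $(TW)_j = W_{j+1} + W_{j-1} - 2W_j$ acting on grid functions that vanish at $j=0$ and $j=N$, I would check by direct substitution that the discrete sine modes $W^{(k)}_j := \sin(k\pi j \delta)$, for $k=1,\ldots,N-1$, are eigenvectors. The trigonometric identity $\sin(k\pi(j+1)\delta) + \sin(k\pi(j-1)\delta) = 2\cos(k\pi\delta)\sin(k\pi j\delta)$ gives the eigenvalue $\mu_k = 2(\cos(k\pi\delta)-1) = -4\sin^2(k\pi\delta/2)$, while the boundary values $W^{(k)}_0 = \sin 0 = 0$ and $W^{(k)}_N = \sin(k\pi) = 0$ hold automatically since $\delta = 1/N$. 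These $N-1$ modes are mutually orthogonal, hence form a complete eigenbasis, so $T$ is fully diagonalised.

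Next I would pass to $d$ dimensions. Because $\Delta_N W_\beta = \sum_{i=1}^d (W_{\beta+e_i} + W_{\beta-e_i} - 2W_\beta)$ amounts to applying $T$ in the $i$-th coordinate direction and summing, it has the Kronecker form $\Delta_N = \sum_{i=1}^d I\otimes\cdots\otimes T\otimes\cdots\otimes I$. Consequently the separable modes $W^{(k_1,\ldots,k_d)}_\beta := \prod_{i=1}^d \sin(k_i\pi\beta_i\delta)$ are simultaneous eigenvectors with eigenvalue $\Lambda_{k_1,\ldots,k_d} = \sum_{i=1}^d \mu_{k_i} = -4\sum_{i=1}^d \sin^2(k_i\pi\delta/2)$, and these $(N-1)^d$ modes again constitute a complete orthogonal basis of the space of grid functions on $\Omega_\delta$.

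Finally, since $\Delta_N$ is symmetric, its singular values coincide with the absolute values of its eigenvalues, so $\underline{\sigma}(\Delta_N) = \min_{k_1,\ldots,k_d} 4\sum_{i=1}^d \sin^2(k_i\pi\delta/2)$. As $k\pi\delta/2 = k\pi/(2N) \in (0,\pi/2)$ for every $k\in\{1,\ldots,N-1\}$, the map $k\mapsto \sin^2(k\pi\delta/2)$ is strictly increasing there, so the minimum is attained at $k_1=\cdots=k_d=1$, giving $\underline{\sigma}(\Delta_N) = 4d\sin^2(\pi\delta/2)$, as claimed. I do not foresee a genuine obstacle; the one point deserving care is the sign bookkeeping, since every eigenvalue of $\Delta_N$ is negative, so the smallest singular value is governed by the \emph{lowest}-frequency mode (the eigenvalue of smallest magnitude) rather than the highest-frequency one.
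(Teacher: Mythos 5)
Your proposal is correct and follows essentially the same route as the paper: both exploit the Kronecker-sum structure $\Delta_N=\sum_{i=1}^d I\otimes\cdots\otimes L_N\otimes\cdots\otimes I$ to reduce to the one-dimensional Dirichlet Laplacian, whose smallest eigenvalue in magnitude is $4\sin^2(\pi\delta/2)$. The only difference is that you derive the one-dimensional spectrum explicitly via the discrete sine basis, whereas the paper simply cites it as well known; your version is more self-contained but not a different argument.
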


\begin{proof}
    The operator $\Delta_N$ is the Dirichlet Laplacian associated with the $d$-dimensional grid $\Omega_\delta$ 
    and is a symmetric negative definite operator. Hence, $\underline{\sigma} (\Delta_N)$ coincides with the smallest eigenvalue of $-\Delta_N$.
    We can write it as
    $$
    -\Delta_N = -\sum_{i=1}^d \Delta_N^{(i)}, 
    $$
    with
    \begin{eqnarray*}
    \Delta_N^{(1)} &=& L_N \otimes I \otimes \cdots \otimes I \\
    \Delta_N^{(2)} &=& I \otimes L_N \otimes \cdots \otimes I \\
    &\vdots & \\
    \Delta_N^{(d)} &=& I \otimes I \otimes \cdots \otimes L_N,
\end{eqnarray*}
and where
$$
L_N =
    \begin{bmatrix}
        -2 & 1 &  \\
        1 & -2 & 1  \\
         & 1 & -2 & 1 \\
        & & \ddots & \ddots & \ddots \\
        & &        & 1 & -2 & 1 \\ 
        & & &  & 1 & -2
    \end{bmatrix}
    \in \R^{(N-1)\times (N-1)}.
$$
Any eigenvector of $-\Delta_N$ can be written as $\Phi = \bigotimes_{i=1}^d \phi^{(i)},$
where $\phi^{(i)} \in \R^{N-1}$ is an eigenvector of $-L_N$ with associated eigenvalue $\Lambda = \sum_{i=1}^d \lambda^{(i)}$, where each $\lambda^{(i)}$ is the eigenvalue associated to $\phi^{(i)}$.
Hence, the smallest eigenvalue of $-\Delta_N$ is $d\lambda_1$, where $\lambda_1$ is the smallest eigenvalue of $-L_N$.
Since the latter is well-known to be $\lambda_1 = 4\sin^2 \left( \frac{\pi}{2N}\right)$, we conclude that
$$
\underline{\sigma} (\Delta_N) = d\lambda_1 = 4d \sin^2\left( \dfrac{\pi}{2N} \right)
= 4d \sin^2\left( \dfrac{\pi}{2}\delta \right).
$$
\end{proof}

Next, we give the proof of Corollary \ref{cor: uniqueness boundary data}.
The proof is almost identical to that of Theorem \ref{thm: uniqueness finite-diff}, with the only difference being that one has to consider the functional's partial derivatives with respect to the boundary points.

\begin{proof}[Proof of Corollary \ref{cor: uniqueness boundary data}]
    First, for any function $u\in C(\overline{\Omega})$, let us write the functional $\widehat{\mathcal{J}} (u)$ as
    $$
    \widehat{\mathcal{J}} (u) = F(U) + \gamma G (U),
    $$
    where $F(U)$ is defined as in \eqref{F(U) def}, and $G (U)$ is given by
    $$
    G (U) := \delta^{d-1} \sum_{\beta\in \overline{\mathfrak{B}}\setminus \mathfrak{B}} \left( U_\beta - g (x_\beta) \right)^2.
    $$
    We use the multi-index notation introduced in \eqref{idx domain def}.

    Using precisely the same argument as in step 1 in the proof of Theorem \ref{thm: uniqueness finite-diff}, we have that if $u$ is a critical point of $\widehat{\mathcal{J}} (u)$, then it must hold that $\nabla F(U) + \gamma \nabla G(U) = 0$.

    Since $G(U)$ does not depend on $U_\beta$ for any $\beta\in \mathfrak{B}$, we have that $u$ being a critical point of $\widehat{\mathcal{J}} (u)$ implies that
    $$
    \partial_{U_\beta} F(U) =  - \delta^{d-1} \left( \sum_{i=1}^d \left(V_{\beta + e_i}^{(i)} W_{\beta+e_i} - V_{\beta - e_i}^{(i)} W_{\beta - e_i} \right)
    + \alpha  \sum_{i=1}^d \left(W_{\beta + e_i} + W_{\beta - e_i} - 2 W_\beta\right) \right) = 0 , \qquad \forall \beta\in \mathfrak{B},
    $$
    where $W_\beta$ and $V_\beta^{(i)}$ are given as in \eqref{W def} and \eqref{V i def}. Now, using the step 2 in the proof of Theorem \ref{thm: uniqueness finite-diff}, we obtain that, if \eqref{condition theorem} holds, then $W_\beta = 0$ for all $\beta \in \mathfrak{B}$.

    Since $W_\beta = 0$ for all $\beta \in \mathfrak{B}$, we have that
    $$
    \partial_{U_\beta} F(U) + \gamma \partial_{U_\beta} G(U) = \gamma \partial_{U_\beta} G(U) = 2 \gamma \delta^{d-1} \left(U_\beta - g(x_\beta)  \right), \qquad \forall \beta\in \overline{\mathfrak{B}} \setminus \mathfrak{B}.
    $$
    Hence, $\partial_{U_\beta} F(U) + \gamma \partial_{U_\beta} G(U) = 0$ implies that $U_\beta = g(x_\beta)$ for all $\beta \in \overline{\mathfrak{B}}\setminus \mathfrak{B}$. This, combined with $W_\beta = 0$ for all $\beta \in \mathfrak{B}$ concludes the proof.
\end{proof}

\subsection{Proof of Theorem \ref{thm: main result}}
\label{subsec: proof main thm}

The main idea of the proof is to re-write the functional $\mathcal{R}(\cdot)$ in such a way that we can apply the conclusion of Theorem \ref{thm: uniqueness finite-diff} for the discrete functional $\widehat{\mathcal{R}} (\cdot)$.

\begin{proof}
Let $N\in \N$ and fix $\delta = 1/(N-1)$.
The interval $[0,1)$ can be partitioned in $N-1$ disjoint intervals as
$$
[0,1) = \left[ 0, \dfrac{1}{N-1} \right) \sqcup \left[ \dfrac{1}{N-1}, \dfrac{2}{N-1}\right) \sqcup \ldots \sqcup \left[ \dfrac{N-2}{N-1}, 1 \right)
= \bigsqcup_{\beta\in \{ 1,\ldots, N-1\}} \left( \delta\beta + [-\delta,0) \right).
$$
Hence, for any dimension $d$, we can write $[0,1)^d$ similarly as a disjoint union of $(N-1)^d$ disjoint $d$-dimensional cubes as
$$
[0,1)^d = \bigsqcup_{\beta \in \mathfrak{B}} \left( \delta\beta + [-\delta, 0)^d \right),
\quad \text{where $\mathfrak{B} := \{ 1, \ldots, N-1 \}^d$.}
$$
Now, recalling that $\Omega = (0,1)^d$, we can use this partition to write $\mathcal{R} (u)$ defined in \eqref{R alpha delta integral} as
\begin{eqnarray}
    \mathcal{R} (u) &=&
    \sum_{\beta \in \mathfrak{B}} \int_{(-\delta, 0)^d} \left[ \widehat{H}_\alpha (z-\delta\beta, D_\delta^+ u(z-\delta\beta), D_\delta^- u(z-\delta \beta))  \right]^2 dz \nonumber \\
    &=& \int_{(-\delta, 0)^d} \sum_{\beta \in \mathfrak{B}}  \left[ \widehat{H}_\alpha (z-\delta\beta, D_\delta^+ u(z-\delta\beta), D_\delta^- u(z-\delta \beta))  \right]^2 dz \nonumber \\
    &=&
    \int_{(-\delta, 0)^d} \widehat{\mathcal{R}} (u; z) dz,
     \label{R continuous computation}
\end{eqnarray}
where
$$
\widehat{\mathcal{R}} (u; z) := \sum_{x\in \Omega_\delta (z)} \left[ \widehat{H}_\alpha (x, D_\delta^+ u(x), D_\delta^- u(x))  \right]^2,
$$
with $\Omega_\delta (z) := \{ x_\beta = z + \delta \beta\, : \ \beta \in \mathfrak{B}\}$ for all $z\in (0,1)^d$.

Note that $\Omega_\delta (z)$ is simply a shift of the grid $\Omega_\delta$ defined in \eqref{grid Omega interior}.
Hence, by virtue of Theorem \ref{thm: uniqueness finite-diff}, if $u\in C(\overline{\Omega})$ is a critical point of $\widehat{\mathcal{R}} (u; z)$, then
$\widehat{H}_\alpha (x, D_\delta^+ u(x), D_\delta^- u(x)) =0$
for all $x\in \Omega_\delta (z)$.

For any functions $u,v\in C(\overline{\Omega})$, and any $z\in (0,1)^d$, let $U(z)$ and $V(z)$ be the grid functions of $u$ and $v$ associated to the grid $\Omega_\delta (z)$, i.e.
$$
U(z) := \{ u(x)\, : x \in \Omega_\delta (z) \}, \quad \text{and} \quad 
V(z) := \{ v(x)\, : x \in \Omega_\delta (z) \}.
$$
We also define, for any $z\in (0,\delta)^d$, the functional on the space of grid functions on $\Omega_\delta (z)$ given by
$$
F_z (U(z)) := \sum_{x\in \Omega_\delta(z)}
\left[ \widehat{H}_\alpha \left( x, D_\delta^+ u(x), D_\delta^- u(x) \right) \right]^2 = \widehat{\mathcal{R}} (u;z).
$$

If $u\in C(\overline{\Omega})$ is a critical point of $\mathcal{R}(\cdot)$, then for any  function $v\in C(\overline{\Omega})$ it must hold that
$$
\lim_{\varepsilon\to 0} \dfrac{\mathcal{R}(u + \varepsilon v) - \mathcal{R}(u)}{\varepsilon} = 0,
$$
or equivalently, we can use \eqref{R continuous computation} to write
$$
    \int_{(-\delta, 0)^d} \lim_{\varepsilon\to 0} \dfrac{\widehat{\mathcal{R}} (u + \varepsilon v;z) - \widehat{\mathcal{R}} (u;z)}{\varepsilon} dz 
    = \int_{(-\delta, 0)^d} \nabla F_z \left( U(z) \right) \cdot V(z) dz = 0.
$$
Since this holds for any function $v\in C(\overline{\Omega})$, we deduce that $\nabla F_z \left( U(z) \right)$ is identically $0$ for all $z\in (-\delta,0)^d$.
Whence, $u$ is a critical point of $\widehat{\mathcal{R}} (u;z)$ for all $z$, and the conclusion follows.
\end{proof}

\section{Summary and perspectives}
\label{sec: summary and perspectives}
In this paper, we propose to solve the boundary value problems for a class of Hamilton-Jacobi equations using Deep Learning. 
Artificial neural networks are trained by typical gradient descent-based algorithms, optimising for the least square problem defined by the residual of a consistent and monotone finite difference scheme for the equation.  
We analyze the critical points of the least square problem in the space of continuous functions and reveal conditions that guarantee the critical points are the unique viscosity solution of the problem. 
These conditions may involve the parameters of the chosen finite difference scheme or those coming from additional ``supervised" data points. 
We demonstrated numerically that the proposed approach effectively solves Hamilton-Jacobi equations in higher dimensions under a favourable setup.

An important point is that artificial neural networks are used instead of grid functions. Thus, one may take advantage of neural networks' approximation power for the smooth parts of a solution and the scaling property of Monte-Carlo sampling involved in the training procedure.
Thus, under favourable conditions, it is reasonable to expect that the proposed approach would suffer less the so-called ``curse of dimensionality" than the classical algorithms.

But during the training iterations, one must ensure that 
the (high dimensional) computational domain is sufficiently ``covered" --- enough to effectively follow the characteristics from the boundaries to everywhere in the domain and ``monitor" how the monotone scheme resolves the collision of the characteristics. This is crucial for computing accurate approximation of the viscosity solutions.

It is widely acknowledged that the choice of collocation points 
can determine the success of a PINN approach.  
This is also true for the proposed approach, given a limited training budget. 
In higher dimensions, due to the ``concentration of measure" errors, the errors ``deeper" in the interior of $\Omega$ are significantly ``discounted."
However, reducing the errors in the interior may be essential to constructing approximate solutions with the proper global structure (such as convexity).
Therefore,  uniform sampling (relative to the Lebesque measure) in training or assigning additional ``supervised" data may be sub-optimal. 
We plan to develop adaptive algorithms for the proposed approach in the near future.

We point out that it is possible to use high-resolution schemes. 
If a gradient descent-based optimization scheme relying automatic differentiation is employed, the numerical scheme selected must be a differentiable function with respect to the variables defined by the scheme's stencil.
This formerly precludes the use of higher order essentially non-oscillatory (ENO) schemes \cite{Osher-Shu:1993}. However, the weighted essentially oscillatory (WENO) schemes \cite{Jiang-Peng-WENO:2000} can be considered.
A potential advantage may come from the wider stencils of such a scheme that have a large domain of influence in the optimization model. 

Finally, we remark that the idea of using a convergent numerical method to form least square principles is not limited to only finite difference methods. It is possible to develop such an algorithm using the finite element methodology. In that case, we envision that proper mesh refinement strategies may be involved during training to enhance error control.

\section*{Acknoledgement}
Esteve-Yagüe is partly supported by the EPSRC grant EP/V029428/1, and by the Ram\'on y Cajal 2022 grant RYC2022-035966-I.
Tsai is partially supported by NSF grant DMS-2110895 and Army Research Office Grant W911NF2320240.

\bibliographystyle{abbrv}
\bibliography{refs}

\end{document}